\theoremstyle{thmstyleone}%
\newtheorem{theorem}{Theorem}
\newtheorem{proposition}[theorem]{Proposition}%
\theoremstyle{thmstyletwo}%
\newtheorem{example}{Example}%
\newtheorem{remark}{Remark}%
\theoremstyle{thmstylethree}%
\newtheorem{definition}{Definition}%
\begin{document}

\title[Two types of compressible isotropic neo-Hookean material models]{Two types of compressible isotropic neo-Hookean material models}


\author[1]{\fnm{Sergey N.} \sur{Korobeynikov}}\email{s.n.korobeynikov@mail.ru}

\author[1]{\fnm{Alexey Yu.} \sur{Larichkin}}\email{larichking@gmail.com}

\author[2]{\fnm{Patrizio} \sur{Neff}}\email{patrizio.neff@uni-due.de}

\affil[1]{\orgname{Lavrentyev Institute of Hydrodynamics SB RAS}, \orgaddress{\street{Lavrentyev av., 15}, \city{Novosibirsk}, \postcode{630090}, \country{Russia}}}

\affil[2]{\orgdiv{Chair for Nonlinear Analysis and Modelling, Faculty of Mathematics}, \orgname{University of Duisburg-Essen}, \orgaddress{\street{Thea-Leymann Strasse, 9}, \city{Essen}, \postcode{D-45127}, \country{Germany}}}



\abstract{In this contribution, we present a systematic study of the performance of two known types of compressible generalization of the incompressible neo-Hookean material model. The first type of generalization is based on the development of \emph{vol-iso} neo-Hookean models and involves the additive decomposition of the elastic energy into volumetric and isochoric parts. The second simpler type of generalization is based on the development of \emph{mixed} neo-Hookean models that do not use this decomposition. Theoretical studies of model performance and simulations of some homogeneous deformations have shown that when using volumetric functions $(J^q+J^{-q}-2)/(2q^2)$ ($J$ is the volume ratio, and $q\in \mathds{R}$ is a parameter, $q\geq 0$) from the Hartmann--Neff family [Hartmann and Neff, Int. J. Solids Structures, 40: 2767--2791 (2003)] with parameter $q\geq 2$ (the preferred value is $q=5$), mixed and vol-iso models show similar performance in applications and have physically reasonable responses in extreme states, which is convenient for theoretical studies. However, contrary to vol-iso models, mixed models allow the use of a wider set of volumetric functions with physically reasonable responses in extreme states. A further feature of mixed models is simpler expressions for stresses and tangent stiffness tensors.}

\keywords{isotropic hyperelasticity, neo-Hookean model, compressibility, constitutive relations, physically admissible response}


\pacs[MSC Classification]{74B20}

\maketitle

\section{Introduction}
\label{sec:1}

In large strain solid mechanics, by the constitutive relations of \emph{hyperelasticity} (or \emph{Green elasticity}) are meant the functional relations between stress and strain tensors using a scalar tensor function called the \emph{elastic energy}, see, e.g., \cite{Hackett2018}. Currently, two approaches to generating elastic energies for isotropic hyperelastic material models exist and are being developed (see, e.g., \cite{Bertram2021,Crisfield1997,deBorst2012,deSouzaNeto2008,Hackett2018,Holzapfel2000,Ogden1984,Wriggers2008,Zhang2025}). The first and historically earlier approach is based on the use of invariants of the left (or right) Cauchy--Green deformation tensor in expressions for elastic energies (note here the outstanding contributions of Mooney and Rivlin, see, e.g., \cite{Hackett2018}). The second approach is based on the use of stretches as independent variables for elastic energies (the first model to use this approach is apparently the Hencky one \cite{HenckyRCT1933,HenckyJAM1933}).

Because of the ease of mathematical derivation of elastic energy expressions and due to experimental studies on the deformation of nearly incompressible rubber-like materials, early hyperelastic models were based on the simplifying assumption of incompressibility of the materials under study. However, the subsequent development of the theoretical foundations of hyperelasticity, driven primarily by numerous applications to large deformations of compressible materials (e.g., elastic foams, graphene, biological tissues, etc.), finite element (FE) implementations of hyperelasticity models,\footnote{In FE-implementations, constitutive relations for incompressible materials are typically replaced by constitutive relations for compressible ones, this replacement is a consequence of using the penalty function method to satisfy the incompressibility condition (see, e.g., \cite{Crisfield1997,deBorst2012,deSouzaNeto2008,PengCS1997}).} and the realization that purely incompressible materials do not exist in nature, led to the study of the properties of compressible (or slightly compressible) hyperelastic material models. Naturally, researchers began to formulate elastic energy expressions for compressible materials based on the well-known elastic energy expressions for incompressible materials.

Within the framework of linear elastic theory, the elastic energy for compressible isotropic materials can be represented in both decoupled and coupled theoretically equivalent  forms. In the first case, the elastic energy has an additive representation as a sum of purely volumetric and purely isochoric energies, and in the second case, the same elastic energy can be represented as a sum of purely volumetric and mixed---volumetric and isochoric---energies. Therefore, when generalizing incompressible hyperelastic material models to account for material compressibility, one faces a dilemma: how to represent the elastic energy: in the decoupled or coupled form? This dilemma arises, e.g., when the Ogden model \cite{OgdenPRSLA1972a}, originally developed for incompressible materials, is generalized to account for material compressibility.\footnote{The Ogden model uses principal stretches as independent variables.} In the original generalization of his model, Ogden used the coupled form of elastic energy \cite{OgdenPRSLA1972b} (see also \cite{PengCS1997,Wriggers2008}), and in the subsequent generalization, Ogden used the uncoupled form of elastic energy \cite{Ogden1984} (see also \cite{deBorst2012,deSouzaNeto2008,Holzapfel2000}). Note that, unlike the linear elastic model, the generalization of the elastic energy for the hyperelastic incompressible material model to account for material compressibility in the decoupled and coupled forms leads to two different material models. Note that the generalization of the elastic energy in the coupled form seems to be simpler and more elegant than the generalization in the decoupled form, since in this case, the term containing the Lagrange multiplier in the potential energy expression for incompressible material is simply replaced by the volumetric energy, whereas in the second case, one uses the complex technique of multiplicative decomposition of principal stretches in volumetric-isochoric form proposed by Richter in the late 1940s--early 1950s (cf., \cite{GrabanMMS2019}).\footnote{Note that this decomposition technique is often attributed to Flory \cite{FloryTFS1961}, whose work was published more than 10 years after the publications by Richter \cite{RichterZAMM1948,RichterZAMM1949,RichterAM1949,RichterMN1952} in Germany (cf., \cite{GrabanMMS2019}, see also \cite{NeffJMPS2025}).} Compressible hyperelastic material models with elastic energies in the coupled form are studied in \cite{AttardIJSS2003,AttardIJSS2004,Bonet2008,Chaves2013,ClaytonMM2014,EhlersAM1998,Hashiguchi2013,KellermannZAMM2016,KorobeynikovAAM2023,OgdenPRSLA1972b,PenceMMS2015,Simo1998,SimoCMAME1984,SpringhettiJElast2022,StickleCM2022,Wriggers2008},
and models of the same material with elastic energies in the decoupled form are addressed in \cite{Bonet2008,Crisfield1997,deSouzaNeto2008,DestradeIJNME2012,EhlersAM1998,Hackett2018,Hashiguchi2013,Haupt2002,Holzapfel2000,KellermannZAMM2016,KorobeynikovAAM2023,KossaMeccanica2023,PenceMMS2015,PennTSR1970,Rubin2021,Simo1998,SpringhettiJElast2022,Wriggers2008,Zhang2025}.
Note that both formulations are rank-one convex and polyconvex provided that $\mu,\,K,\,\lambda\,>0$ and the volumetric function $h(J)$ is convex in $J=\det\,\mathbf{F}$ (cf., \cite{HartmannIJSS2003}), where $\mu$ is the shear modulus, $K$ is the bulk modulus, $\lambda$ is the second Lam\'{e} parameter, and $\mathbf{F}$ is the deformation gradient.

Note that there are also other approaches to the compressible generalization of elastic energies for hyperelastic incompressible materials, although based on the Richter--Flory decomposition, but not involving the additive decomposition of elastic energies into volumetric and isochoric strain energies (see, e.g., \cite{AttardIJSS2003,AttardIJSS2004,FongTSR1975,HuangJAM2014,HuangJAM2016,Ogden1984,RogovoyEJMAS2001,YaoPTRSA2022}). Such elastic energies are used to capture the features of dilatation behavior of slightly compressible rubber-like materials during deformation, in particular, to obtain agreement with the experimental findings by Penn \cite{PennTSR1970}, who established that the use of decoupled forms of elastic energies is inconsistent with his experimental data on the dependence of dilatation on elongation under uniaxial loading. However, the analysis of this type of elastic energies is beyond the scope of this book.

Since the decoupled form of elastic energy expressions contradicts some experimental data, the question arises: do hyperelasticity models based on the decoupled form of elastic energy expressions indeed have such a decisive advantage over similar models based on the direct generalization of elastic energy expressions without using the Richter--Flory decomposition, which led to the fact that models of the first type are implemented in many commercial FE systems to the detriment of models of the second type for simulating deformations of slightly compressible rubber-like materials? The answer to this question can be obtained by comparing the performance characteristics of the above two types of hyperelastic material models, especially under conditions of slight compressibility. It is reasonable to restrict this comparison to compressible generalizations of elastic energy expressions for the simplest incompressible isotropic hyperelastic material model --- the neo-Hookean material model.  First, the elastic energy for this model is written as a dependence on the first invariant of the right (or left) Cauchy--Green deformation tensor (see, e.g., \cite{Hackett2018}), and, second, this elastic energy can be represented as a one-power version of the Ogden material with power $n=2$ (cf., \cite{OgdenPRSLA1972a}). We consider therefore two families of elastic energies for compressible neo-Hookean material models, the mixed models and the vol-iso ones. Both are then composed with the same volumetric functions $h(J)$ (see Fig. \ref{f-I1}).
\begin{figure}
\begin{center}
\includegraphics{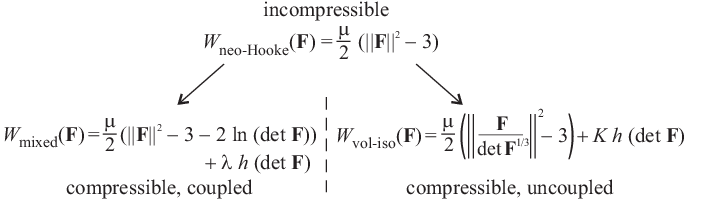}
\end{center}
\caption{Sketch of generation of two different types of the elastic energies $W_{\text{mixed}}$ and $ W_{\text{vol-iso}}$ ($\mathbf{F}$ is the deformation gradient) for compressible neo-Hookean material models based on the elastic energy $W_{\text{neo-Hooke}}$ for the incompressible neo-Hookean material model (see Section \ref{sec:3} for details). Here, $\mu$ is the standard shear modulus, $\lambda$ is the second Lam\'{e} parameter and $K$ is the bulk modulus.}
\label{f-I1}
\end{figure}
For each considered $h(J)$ we give a number \#1,\ldots,\#8. In this way, speaking of a mixed or vol-iso model with \#$k$ ($k=1,\ldots,8$), the energy is uniquely defined (see Tables \ref{t-Intro-1} and \ref{t-Intro-2}), where $J=\det \mathbf{F}$.
\begin{table}
\caption{Some elastic energies for the mixed compressible neo-Hookean materials}
\label{t-Intro-1}
\begin{tabular}{ll}
\hline\noalign{\smallskip}
 Model ID & Expression for the elastic energy                                            \\
\noalign{\smallskip}\hline\noalign{\smallskip}
   1      & $\mu\,(\|\mathbf{F}\|^2 - 3 - 2\ln J)/2 + \lambda\,(\ln J)^2/2$              \\
   2      & $\mu\,(\|\mathbf{F}\|^2 - 3 - 2\ln J)/2 + \lambda\,(J+J^{-1}-2)/2$           \\
   3      & $\mu\,(\|\mathbf{F}\|^2 - 3 - 2\ln J)/2 + \lambda\,(J^2+J^{-2}-2)/8$         \\
   4      & $\mu\,(\|\mathbf{F}\|^2 - 3 - 2\ln J)/2 + \lambda\,(J^5+J^{-5}-2)/50$        \\
   5      & $\mu\,(\|\mathbf{F}\|^2 - 3 - 2\ln J)/2 + \lambda\,(J^2-2\ln J-1)/4$         \\
   6      & $\mu\,(\|\mathbf{F}\|^2 - 3 - 2\ln J)/2 + \lambda\,(J-\ln J-1)$              \\
   7      & $\mu\,(\|\mathbf{F}\|^2 - 3 - 2\ln J)/2 + \lambda\,(J-1)^2/2$                \\
   8      & $\mu\,(\|\mathbf{F}\|^2 - 3 - 2\ln J)/2 + \lambda\,(\mathrm{e}^{\ln^{2}\!\!J} - 1)/2$ \\
\noalign{\smallskip}\hline
\end{tabular}
\end{table}
\begin{table}
\caption{Some elastic energies for the vol-iso compressible neo-Hookean materials}
\label{t-Intro-2}
\begin{tabular}{ll}
\hline\noalign{\smallskip}
 Model ID & Expression for the elastic energy                                     \\
\noalign{\smallskip}\hline\noalign{\smallskip}
   1      & $\mu\,(\|\mathbf{F}/J^{1/3}\|^2 - 3)/2 + K\,(\ln J)^2/2$              \\
   2      & $\mu\,(\|\mathbf{F}/J^{1/3}\|^2 - 3)/2 + K\,(J+J^{-1}-2)/2$           \\
   3      & $\mu\,(\|\mathbf{F}/J^{1/3}\|^2 - 3)/2 + K\,(J^2+J^{-2}-2)/8$         \\
   4      & $\mu\,(\|\mathbf{F}/J^{1/3}\|^2 - 3)/2 + K\,(J^5+J^{-5}-2)/50$        \\
   5      & $\mu\,(\|\mathbf{F}/J^{1/3}\|^2 - 3)/2 + K\,(J^2-2\ln J-1)/4$         \\
   6      & $\mu\,(\|\mathbf{F}/J^{1/3}\|^2 - 3)/2 + K\,(J-\ln J-1)$              \\
   7      & $\mu\,(\|\mathbf{F}/J^{1/3}\|^2 - 3)/2 + K\,(J-1)^2/2$                \\
   8      & $\mu\,(\|\mathbf{F}/J^{1/3}\|^2 - 3)/2 + K\,(\mathrm{e}^{\ln^{2}\!\!J} - 1)/2$ \\
\noalign{\smallskip}\hline
\end{tabular}
\end{table}

At a sufficiently high degree of material compressibility, for example, at Poisson's ratio $\nu=0.125$ (see Section \ref{sec:3-4}), the two types of material models considered give different plots of elastic energies versus longitudinal stretches (see Fig. \ref{f-I2}) when using the same volumetric functions in the uniaxial loading problem and the vol-iso model \#7 even leads to an unphysical non-monotonic plot of the elastic energy versus longitudinal stretch. This suggests that for the same two values of material parameters (shear modulus $\mu$ and Poisson's ratio $\nu$) and the same volumetric function $h(J)$, these two types of material models can lead to different quantitative values of stresses in deformable bodies (see Section \ref{sec:6} for examples). This fact motivates a thorough and comprehensive study of the performance of the two types of compressible neo-Hookean material models under consideration.
\begin{figure}
\begin{center}
\includegraphics{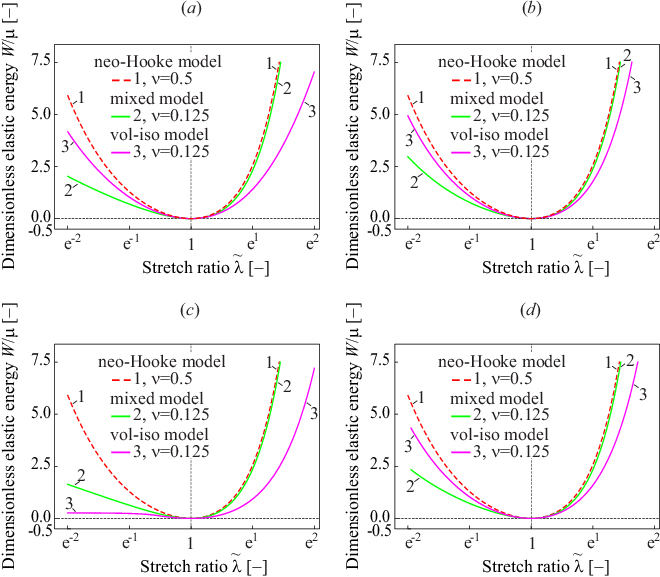}
\end{center}
\caption{Plots of elastic energies versus longitudinal stretches $\tilde{\lambda}$ in the uniaxial loading problem for compressible neo-Hookean models (a dimensionless elastic energy $W/\mu$ depends on Poisson's ratio $\nu$ only due to the expressions $\lambda=2\mu\,\nu/(1-2\nu)\ (\Leftrightarrow\ \nu = \lambda/2(\lambda+\mu))$  and $K=\lambda + 2\mu/3$) with the following volumetric functions ($J=\det \mathbf{F}$) $h(J)$:  $(\ln^2J)/2$ (\emph{a}), $(J^{5}+J^{-5})/50$ (\emph{b}), $(J-1)^2/2$ (\emph{c}), and $(\mathrm{e}^{\ln^{2}\!\!J} - 1)/2$ (\emph{d}) (corresponding to volumetric functions \#1, \#4, \#7, and \#8, see Table \ref{t1} for the volumetric functions definition).}
\label{f-I2}
\end{figure}

One objective of this study is a comparative analysis of the performance of two types (\emph{vol-iso/mixed}) of compressible isotropic hyperelastic neo-Hookean material models obtained by generalizing the standard neo-Hookean isotropic incompressible material model to account for volumetric energy with/without the additive decomposition of a elastic energy into volumetric and isochoric strain energies. In this comparison, we will check whether both types of compressible material models satisfy two fundamental postulates of solid mechanics formulated to test nonlinear material models for satisfaction of some desired properties not following from thermodynamic constraints. The first postulate (the \emph{Hill postulate}), requiring the satisfaction of some inequality, was proposed more than half a century ago by Hill (cf., \cite{HillJMPS1968,HillPRSLA1970,Hill1979}), and the second inequality in the form of the corotational stability postulate (CSP) was proposed recently by Neff et al. (cf., \cite{dAgostinoJElast2025,NeffIJNLM2025,NeffJMPS2025}). Both these postulates extend the \emph{Drucker postulate} (the material stability postulate), well-known in solid mechanics and usually formulated for infinitesimal strains, to the case of finite strains. The satisfaction of the first (Hill) postulate is equivalent to that the Kirchhoff stress tensor is a monotonic function of the logarithmic strain tensor, and the satisfaction of the second postulate (CSP) is equivalent to that the Cauchy stress tensor is a monotonic function of the logarithmic strain tensor.

The next objective of this study is to compare the performance of the above two types of compressible material models by solving some problems with homogeneous deformations. Note that the performance characteristics of these models in solving similar problems have already been studied by a number of researchers (see, e.g., \cite{EhlersAM1998,KellermannZAMM2016,KorobeynikovAAM2023,KossaMeccanica2023,PenceMMS2015,SpringhettiJElast2022}), but the comparative analysis of the performance characteristics in these studies is apparently not thorough enough and not always fully correct. For example, a comparison of stresses and strains was performed \cite{EhlersAM1998,PenceMMS2015} using different types of volumetric energies in different models for the same homogeneous deformations, but for a correct comparison, it is necessary to use the same types of this energy for different models. In addition, previous studies have found some physically unreasonable responses for both types of material models under consideration. Therefore, our task here is to select the types of volumetric energy that would minimize these physically unreasonable responses.

In summary, the main objectives of this study are to answer the following questions:
\begin{enumerate}
  \item what types of volumetric energies minimize the effects of physically unreasonable response for the neo-Hookean models under consideration?
  \item is the mixed neo-Hookean compressible hyperelastic material model taking into account volumetric energy without the additive decomposition of the elastic energy into volumetric and isochoric strain energies inferior in performance to the vol-iso counterpart with this decomposition?
\end{enumerate}

The novelty of this study lies in conclusively answering these questions.

\section{Preliminaries}
\label{sec:2}

In this chapter we present the basic expressions for tensor algebra (Section \ref{sec:2-1}), local body deformations and basic kinematics (Section \ref{sec:2-2}), and stress tensors and their rates (Section \ref{sec:2-3}) required for the exposition of the basic material of the research. In addition, Section \ref{sec:2-4} presents different forms of the elastic energy and constitutive relations for the linear isotropic material model that inspired the derivation of similar forms for the nonlinear neo-Hookean material (see Section \ref{sec:3}).

\subsection{Basic equations of tensor algebra}
\label{sec:2-1}

We define the second-order tensor $\mathbf{H}\in \mathcal{T}^2$ and the fourth-order tensor $\mathbb{H}\in \mathcal{T}^4$ (hereinafter, $\mathcal{T}^2$ and $\mathcal{T}^4$ denote the sets of all second-order and fourth-order tensors). Hereinafter, $\mathcal{T}^2_{\text{sym}},\  \mathcal{T}^2_{\text{skew}}\subset \mathcal{T}^2$ denote the sets of all symmetric and skew-symmetric second-order tensors, respectively; $\text{sym}\,\mathbf{A}\equiv (\mathbf{A}+\mathbf{A}^T)/2\in \mathcal{T}^2_{\text{sym}}$ and $\text{skew}\,\mathbf{A}\equiv (\mathbf{A}-\mathbf{A}^T)/2\in \mathcal{T}^2_{\text{skew}}$ denote the symmetric and skew-symmetric components of the tensor $\mathbf{A}\in \mathcal{T}^2$, respectively. Next the set of fourth-order tensors with major symmetry will be denoted by $\mathcal{T}^4_{\text{Sym}}\subset \mathcal{T}^4$, the set of the same tensors with twice minor symmetry will be denoted by $\mathcal{T}^4_{\text{sym}}\subset \mathcal{T}^4$, and the set of tensors with both major and twice minor symmetries\footnote{Hereinafter, tensors with both major and twice minor symmetries will be called \emph{supersymmetric} (cf., \cite{Itskov2019}) or \emph{fully symmetric} (cf., \cite{FedericoIJNLM2012,FedericoMMS2025}) tensors.} by $\mathcal{T}^4_{\text{Ssym}}\subset \mathcal{T}^4$.

Let $\mathbf{A},\mathbf{H}\in \mathcal{T}^2$. We define the \emph{double inner product} (\emph{double contraction}) operation of tensors:
\begin{equation*}
    \mathbf{A}:\mathbf{H}=\mathbf{H}:\mathbf{A} \equiv \text{tr}(\mathbf{A}\cdot\mathbf{H}^T)=  \text{tr}(\mathbf{A}^T\cdot\mathbf{H})=
    \text{tr}(\mathbf{H}\cdot\mathbf{A}^T)=\text{tr}(\mathbf{H}^T\cdot\mathbf{A})\ (=\,\langle\mathbf{A},\mathbf{H}\rangle).
\end{equation*}
Hereinafter, the superscript $T$ denotes the transposition of a tensor, and the dot between vectors and/or tensors denotes their inner (matrix) product. Using this definition, we can introduce the norm of any second order tensor
\begin{equation*}
  \|\mathbf{A}\| \equiv (\mathbf{A}:\mathbf{A})^{1/2}\ (=\sqrt{\langle\mathbf{A},\mathbf{A}\rangle}).
\end{equation*}

We define the following four external product operations for second-order tensors according to the definitions given in \cite{Curnier1994,Holzapfel2000,PeyrautANM2009}: \emph{dyadic} $\mathbf{A}\otimes\mathbf{H}$, \emph{direct} $\mathbf{A}\underline{\otimes}\,\mathbf{H}$, \emph{alternate} $\mathbf{A}\overline{\otimes}\,\mathbf{H}$, and \emph{symmetric} $\mathbf{A}\!\overset{\text{sym}}{\otimes}\!\mathbf{H}\equiv \frac{1}{2}(\mathbf{A}\underline{\otimes}\,\mathbf{H}+ \mathbf{A}\overline{\otimes}\,\mathbf{H})$ tensor products.

Let $\mathbf{A},\mathbf{B},\mathbf{X}\in \mathcal{T}^2$. We can show that the following identities hold:
\begin{align*}
    (\mathbf{A}\otimes\mathbf{B}):\mathbf{X} &= \mathbf{A}(\mathbf{B}:\mathbf{X}),  \\
    (\mathbf{A}\!\overset{\text{sym}}{\otimes}\!\mathbf{B}):\mathbf{X} & = \mathbf{A}\cdot \text{sym}\,\mathbf{X}\cdot\mathbf{B}^T. \notag
\end{align*}

Let $\mathbf{S}\in\mathcal{T}^2_{\text{sym}}$. This tensor can be represented in the classical spectral form
\begin{equation}\label{3}
\mathbf{S}=\sum_{k=1}^{3} s_k\,\mathbf{n}_k\otimes\mathbf{n}_k,
\end{equation}
where $s_k\in \mathds{R}$ are eigenvalues and $\{\mathbf{n}_k\}$ ($k=1,2,3$) is the triad of the corresponding subordinate right-oriented orthonormal eigenvectors (corresponding to the principal directions) of the tensor $\mathbf{S}$, and the symbol $\otimes$ denotes the dyadic product of vectors. For multiple eigenvalues $s_k$, the corresponding eigenvectors $\mathbf{n}_k$ are defined ambiguously. This ambiguity can be circumvented by representing the tensor $\mathbf{S}$ in terms of \emph{eigenprojections} (see, e.g., \cite{Bertram2021,KorobeynikovAM2011,KorobeynikovAM2023,LuehrCMAME1990}):
\begin{equation}\label{4}
    \mathbf{S}=\sum^m_{i=1}s_i\, \mathbf{S}_i.
\end{equation}
Here, $s_i$ are all different $m$\footnote{The number $m$ ($1\leq m\leq 3$) will be called the \emph{eigenindex}.} eigenvalues of the tensor $\mathbf{S}$ and $\mathbf{S}_i$ ($i=1,\ldots,m$) are the subordinate \emph{eigenprojections}. Without loss of generality, we number the eigenvalues $s_k$ and define the eigenprojections depending on the eigenindex $m$ as follows:
\begin{equation}\label{5}
   m= \begin{cases}
    3, & s_1\neq s_2 \neq s_3 \neq s_1,\quad  \mathbf{S}_i=\mathbf{n}_i\otimes \mathbf{n}_i\ (i=1,2,3); \\
    2, & s_1\neq s_2 = s_3, \quad \quad \quad\  \mathbf{S}_1=\mathbf{n}_1\otimes \mathbf{n}_1,\ \mathbf{S}_2=\mathbf{n}_2\otimes \mathbf{n}_2+\mathbf{n}_3\otimes \mathbf{n}_3 = \mathbf{I} - \mathbf{S}_1; \\
    1, & s_1 = s_2 = s_3, \quad \quad \quad\ \mathbf{S}_1=\mathbf{n}_1\otimes \mathbf{n}_1 + \mathbf{n}_2\otimes \mathbf{n}_2+\mathbf{n}_3\otimes \mathbf{n}_3=\mathbf{I}.
    \end{cases}
\end{equation}
Hereinafter, $\mathbf{I}$ denotes the identity tensor. The eigenprojections have the following properties \cite{LuehrCMAME1990}:
\begin{equation}\label{6}
   \mathbf{S}_i \cdot \mathbf{S}_j=
   \begin{cases}
    \mathbf{S}_i & \text{if}\ i=j \\
    \mathbf{0} & \text{if}\ i\neq j
   \end{cases},
\quad\quad \sum^m_{i=1}\mathbf{S}_i=\mathbf{I}, \quad\quad \text{tr}\,\mathbf{S}_i=m_i\quad (i,j=1,\ldots,m).
\end{equation}
Here, $m_i$ denotes the multiplicity of an eigenvalue $s_i$ and $\mathbf{0}\in \mathcal{T}^2$ denotes the zero second-order tensor.

Let the tensors $\mathbf{S},\mathbf{H} \in \mathcal{T}^2_{\text{sym}}$ be arbitrary tensors, and let the tensor $\mathbf{S}$ have the spectral representation \eqref{4}. We represent the  tensor $\mathbf{H}$ in the following form (see Corollary 2.1 of \cite{KorobeynikovAM2018}):\footnote{Hereinafter, the notation $\sum_{i\neq j=1}^{m}$ denotes the summation over $i,j=1,\ldots, m$ and $i\neq j$ and this summation is assumed to vanish when $m=1$.}
\begin{equation}\label{7}
\mathbf{H} = \hat{\mathbf{H}} + \tilde{\mathbf{H}},\quad\quad\quad \hat{\mathbf{H}}\equiv \sum_{i=1}^{m} \mathbf{S}_i\cdot \mathbf{H}\cdot\mathbf{S}_i,\quad\quad\quad
\tilde{\mathbf{H}}\equiv \sum_{i\neq j=1}^{m} \mathbf{S}_i\cdot \mathbf{H}\cdot\mathbf{S}_j,
\end{equation}
so that $\hat{\mathbf{H}},\, \tilde{\mathbf{H}}\in \mathcal{T}^2_{\text{sym}}$ are the components of the tensor $\mathbf{H}$ that are coaxial \index{Tensors!coaxial} and orthogonal\footnote{Tensors $\mathbf{X},\,\mathbf{Y}\in \mathcal{T}^2_{\text{sym}}$ will be called orthogonal if the equality $\mathbf{X}:\mathbf{Y}=0$ is satisfied.} to the tensor $\mathbf{S}$. Since the tensor $\hat{\mathbf{H}}$ is coaxial with the tensor $\mathbf{S}$, this tensor can be also represented as
\begin{equation}\label{8}
    \hat{\mathbf{H}}=\sum^m_{i=1}H_i\, \mathbf{S}_i.
\end{equation}

\begin{proposition}
\label{Pr-1}
Suppose that $\mathbf{H},\, \mathbf{S}\in \mathcal{T}^2_{\text{\emph{sym}}}$; $\mathbf{S}$ has the spectral representation \eqref{4}; and the tensor $\mathbf{X}$ is an isotropic tensor function of the tensor arguments $\mathbf{S}$ and $\mathbf{H}$ which is linear in $\mathbf{H}$ and can be written as ($2\leq m\leq 3$)
\begin{equation*}
    \mathbf{X}(\mathbf{S},\mathbf{H})= \sum_{i\neq j=1}^{m} x_{ij}\, \mathbf{S}_i\cdot \mathbf{H}\cdot\mathbf{S}_j,\quad x_{ij}\equiv x(s_i,s_j)\ (x_{ji}=x_{ij}), 
\end{equation*}
or, in an alternative form, as\footnote{The fact that $\mathbb{X}\,\in \mathcal{T}^4_\text{{Ssym}}$ follows from the statements of Theorem 2.1 in \cite{KorobeynikovAM2018}.}
\begin{equation}\label{10}
    \mathbf{X}(\mathbf{S},\mathbf{H}) =   \mathbb{X}(\mathbf{S}): \mathbf{H}\quad (\mathbb{X}\,\in \mathcal{T}^4_\text{{\emph{Ssym}}}),
\end{equation}
where
\begin{equation*}
\mathbb{X}(\mathbf{S}) \equiv \sum_{i\neq j=1}^{m} x_{ij}\,\mathbf{S}_i\!\overset{\text{\emph{sym}}}{\otimes}\!\mathbf{S}_j.
\end{equation*}

\noindent
Then the inequalities
\begin{equation*}
  x_{ij}>0\quad (i,j=1,\ldots m,\ i\neq j)
\end{equation*}
are necessary and sufficient for the positive definiteness of the tensorial function $\mathbf{X}(\mathbf{S},\mathbf{H})$ and its associated tensor $\mathbb{X}$ with respect to the tensor $\tilde{\mathbf{H}}$ (which is orthogonal to the tensor $\mathbf{S}$); i.e., $\mathbf{X}:\tilde{\mathbf{H}}>0\  \forall\ \tilde{\mathbf{H}}\neq \mathbf{0} \Leftrightarrow \ \tilde{\mathbf{H}}:\mathbb{X}:\tilde{\mathbf{H}}>0\ \forall\ \tilde{\mathbf{H}}\neq \mathbf{0}$.
\end{proposition}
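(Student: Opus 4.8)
The plan is to exploit the orthogonal splitting \eqref{7}: the scalar $\mathbf{X}:\tilde{\mathbf{H}}$ should turn into a weighted sum of squares with the $x_{ij}$ as weights, after which both implications become immediate. First I would record that, by $\mathbf{S}_i\cdot\mathbf{S}_j=\delta_{ij}\mathbf{S}_i$ from \eqref{6}, the coaxial part $\hat{\mathbf{H}}$ is annihilated on the left and on the right by distinct eigenprojections, so $\mathbf{S}_i\cdot\mathbf{H}\cdot\mathbf{S}_j=\mathbf{S}_i\cdot\tilde{\mathbf{H}}\cdot\mathbf{S}_j$ for $i\neq j$ and hence $\mathbf{X}(\mathbf{S},\mathbf{H})=\mathbf{X}(\mathbf{S},\tilde{\mathbf{H}})$; that is, $\mathbf{X}$ depends on $\mathbf{H}$ only through the part $\tilde{\mathbf{H}}$ orthogonal to $\mathbf{S}$. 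Setting $\mathbf{H}_{ij}\equiv\mathbf{S}_i\cdot\tilde{\mathbf{H}}\cdot\mathbf{S}_j$ for $i\neq j$, one has $\tilde{\mathbf{H}}=\sum_{i\neq j=1}^{m}\mathbf{H}_{ij}$ and $\mathbf{H}_{ji}=\mathbf{H}_{ij}^{T}$, and — using \eqref{6} again together with the symmetry of $\tilde{\mathbf{H}}$ and cyclicity of the trace — the blocks are pairwise orthogonal in the double inner product, $\mathbf{H}_{ij}:\mathbf{H}_{kl}=\delta_{ik}\delta_{jl}\|\mathbf{H}_{ij}\|^{2}$; thus $\tilde{\mathbf{H}}\neq\mathbf{0}$ exactly when some block is nonzero, and $\|\mathbf{H}_{ji}\|=\|\mathbf{H}_{ij}\|$.

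Substituting these relations into $\mathbf{X}:\tilde{\mathbf{H}}=\sum_{i\neq j}x_{ij}\,\mathbf{H}_{ij}:\tilde{\mathbf{H}}$ gives $\mathbf{X}:\tilde{\mathbf{H}}=\sum_{i\neq j}x_{ij}\|\mathbf{H}_{ij}\|^{2}=2\sum_{i<j}x_{ij}\|\mathbf{H}_{ij}\|^{2}$ (the last step uses $x_{ji}=x_{ij}$). Sufficiency is then immediate: if every $x_{ij}>0$ and $\tilde{\mathbf{H}}\neq\mathbf{0}$, some $\|\mathbf{H}_{ij}\|^{2}$ is positive and the sum is strictly positive. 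For necessity I would argue by contraposition: if $x_{i_0 j_0}\leq 0$ for some $i_0\neq j_0$, pick a unit vector $\mathbf{u}$ in the range of $\mathbf{S}_{i_0}$ and a unit vector $\mathbf{v}$ in the range of $\mathbf{S}_{j_0}$ (possible for any multiplicities $m_{i_0},m_{j_0}\geq 1$) and take $\mathbf{H}=\mathbf{u}\otimes\mathbf{v}+\mathbf{v}\otimes\mathbf{u}\in\mathcal{T}^2_{\text{sym}}$; since $\mathbf{S}_k\cdot\mathbf{u}=\delta_{k i_0}\mathbf{u}$ and $\mathbf{S}_k\cdot\mathbf{v}=\delta_{k j_0}\mathbf{v}$, this tensor has vanishing coaxial part (so $\tilde{\mathbf{H}}=\mathbf{H}\neq\mathbf{0}$) and its only nonzero blocks $\mathbf{H}_{i_0 j_0}=\mathbf{u}\otimes\mathbf{v}$, $\mathbf{H}_{j_0 i_0}=\mathbf{v}\otimes\mathbf{u}$ have unit norm, whence $\mathbf{X}:\tilde{\mathbf{H}}=2x_{i_0 j_0}\leq 0$ and positive definiteness with respect to $\tilde{\mathbf{H}}$ fails.

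It remains to identify the $\mathbf{X}$- and $\mathbb{X}$-statements. From \eqref{10} and the reduction of the first paragraph, $\mathbb{X}:\mathbf{H}=\mathbf{X}(\mathbf{S},\mathbf{H})=\mathbf{X}(\mathbf{S},\tilde{\mathbf{H}})=\mathbb{X}:\tilde{\mathbf{H}}$, so $\mathbf{X}:\tilde{\mathbf{H}}=(\mathbb{X}:\tilde{\mathbf{H}}):\tilde{\mathbf{H}}=\tilde{\mathbf{H}}:\mathbb{X}:\tilde{\mathbf{H}}$ by the symmetries of $\mathbb{X}\in\mathcal{T}^4_{\text{Ssym}}$, and the three conditions in the statement (the inequalities on $x_{ij}$, positive-definiteness of $\mathbf{X}$, positive-definiteness of $\mathbb{X}$) coincide; the case $m=1$ is vacuous, since then $\tilde{\mathbf{H}}\equiv\mathbf{0}$. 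The only step that needs genuine computation is the block-orthogonality identity $\mathbf{H}_{ij}:\mathbf{H}_{kl}=\delta_{ik}\delta_{jl}\|\mathbf{H}_{ij}\|^{2}$, which is exactly what diagonalizes the quadratic form; I expect this identity — together with checking that the test tensor in the necessity step really lies in the subspace orthogonal to $\mathbf{S}$ when eigenvalues are repeated — to be the main (and essentially only) obstacle, everything else being bookkeeping with \eqref{6}.
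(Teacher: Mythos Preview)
Your proof is correct and follows essentially the same strategy as the paper: both reduce the quadratic form $\mathbf{X}:\tilde{\mathbf{H}}=\tilde{\mathbf{H}}:\mathbb{X}:\tilde{\mathbf{H}}$ to a weighted sum of squares of the off-diagonal pieces of $\mathbf{H}$ with weights $x_{ij}$, from which sufficiency and necessity are read off. The only cosmetic difference is that the paper computes componentwise in the eigenbasis $\{\mathbf{n}_k\}$ and splits into the cases $m=3$ and $m=2$ (obtaining $2(x_{12}H_{12}^2+x_{13}H_{13}^2+x_{23}H_{23}^2)$ and $2x_{12}(H_{12}^2+H_{13}^2)$, respectively), whereas your block formulation $\mathbf{H}_{ij}=\mathbf{S}_i\cdot\tilde{\mathbf{H}}\cdot\mathbf{S}_j$ handles both cases uniformly and keeps the argument basis-free; your explicit counterexample for necessity is also a bit more detailed than the paper's ``follows from \eqref{15}''.
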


\begin{proof}
In view of the statements of Theorem 2.2 in \cite{KorobeynikovAM2018}, the tensor $\mathbf{X}$ is also orthogonal to the tensor $\mathbf{S}$, whence it follows that
\begin{equation*}
  \mathbf{X}:\mathbf{H}=\mathbf{X}:\tilde{\mathbf{H}}.
\end{equation*}
Since the tensors considered are coaxial and orthogonal to the tensor $\mathbf{S}$ and hence to each other, we obtain the equality
\begin{equation*}
 \mathbf{H}: \mathbb{X}:\mathbf{H}=\tilde{\mathbf{H}}: \mathbb{X}:\tilde{\mathbf{H}}.
\end{equation*}
In view of the symmetry of the quantities $x_{ij}>0\ (i,j=1,\ldots m,\ i\neq j)$, the symmetry of the tensor $\tilde{\mathbf{H}}$ (which is a consequence of the symmetry of the tensor $\mathbf{H}$), and the representation of the eigenprojections in \eqref{5}, direct calculations yield
\begin{equation}\label{15}
 \mathbf{H}: \mathbb{X}:\mathbf{H}=\tilde{\mathbf{H}}: \mathbb{X}:\tilde{\mathbf{H}}=
\begin{cases}
    2(x_{12}H_{12}^2+x_{13}H_{13}^2+x_{23}H_{23}^2), & \text{if}\ m=3 \\
    2x_{12}(H_{12}^2 + H_{13}^2), & \text{if}\ m=2
   \end{cases},
\end{equation}
where
\begin{equation*}
  H_{kl}(=H_{lk})\equiv \mathbf{n}_k\cdot \mathbf{H}\cdot \mathbf{n}_l\ (k,l=1,2,3,\ k\neq l).
\end{equation*}
The statements of Proposition follow from \eqref{15}.
\end{proof}

\begin{example}
\label{exam:2-1}
Let us consider a simple example for 2D analysis illustrating the statements of Proposition \ref{Pr-1}. Let the tensor $\mathbf{S}\in\mathcal{T}^2_{\text{sym}}$ have the following spectral representation of the form \eqref{3}
\begin{equation*}
\mathbf{S}= s_1\,\mathbf{n}_1\otimes\mathbf{n}_1 + s_1\,\mathbf{n}_2\otimes\mathbf{n}_2.
\end{equation*}
Let any tensor $\mathbf{H}\in\mathcal{T}^2_{\text{sym}}$ have the following representation in the principal axes of the tensor $\mathbf{S}$:
\begin{equation*}
\mathbf{H}= H_{11}\,\mathbf{n}_1\otimes\mathbf{n}_1 + H_{22}\,\mathbf{n}_2\otimes\mathbf{n}_2 + H_{12}(\mathbf{n}_1\otimes\mathbf{n}_2 + \mathbf{n}_2\otimes\mathbf{n}_1),
\end{equation*}
or
\begin{equation*}
\mathbf{H} = \hat{\mathbf{H}} + \tilde{\mathbf{H}},\quad\quad \hat{\mathbf{H}}\equiv H_{11}\,\mathbf{n}_1\otimes\mathbf{n}_1 + H_{22}\,\mathbf{n}_2\otimes\mathbf{n}_2,\quad\quad
\tilde{\mathbf{H}} \equiv H_{12}(\mathbf{n}_1\otimes\mathbf{n}_2 + \mathbf{n}_2\otimes\mathbf{n}_1).
\end{equation*}
We define the following isotropic tensor function of its arguments $\mathbf{X}(\mathbf{S},\mathbf{H})\in\mathcal{T}^2_{\text{sym}}$ which is linear in $\mathbf{H}$
\begin{equation*}
    \mathbf{X}(\mathbf{S},\mathbf{H})= H_{12}(s_1+s_2)(\mathbf{n}_1\otimes\mathbf{n}_2 + \mathbf{n}_2\otimes\mathbf{n}_1).
\end{equation*}
Note that this tensor function can be represented in the form \eqref{10}, where
\begin{equation*}
\mathbb{X}(\mathbf{S}) \equiv (s_1+s_2)(\mathbf{n}_1\otimes\mathbf{n}_1)\!\overset{\text{sym}}{\otimes}\!(\mathbf{n}_2\otimes\mathbf{n}_2).
\end{equation*}
Using the definition of the symmetric external tensor product operation \cite{Curnier1994}, we obtain an explicit representation of the supersymmetric tensor $\mathbb{X}(\mathbf{S})$ in the principal axes of the tensor $\mathbf{S}$
\begin{align}\label{15-5}
\mathbb{X}(\mathbf{S}) = \frac{1}{2} (s_1+s_2)(&\mathbf{n}_1\otimes\mathbf{n}_2\otimes\mathbf{n}_1\otimes\mathbf{n}_2 + \mathbf{n}_2\otimes\mathbf{n}_1\otimes\mathbf{n}_2\otimes\mathbf{n}_1 + \\
&\mathbf{n}_1\otimes\mathbf{n}_2\otimes\mathbf{n}_2\otimes\mathbf{n}_1 + \mathbf{n}_2\otimes\mathbf{n}_1\otimes\mathbf{n}_1\otimes\mathbf{n}_2). \notag
\end{align}
Using the explicit representations of the tensors $\mathbf{X}$ and $\mathbb{X}$ and the definition of the double inner product of tensors, we obtain the scalar
\begin{equation}\label{15-6}
  \mathbf{X}:\mathbf{H}=\mathbf{H}: \mathbb{X}:\mathbf{H} = \mathbf{X}:\tilde{\mathbf{H}}=\tilde{\mathbf{H}}: \mathbb{X}:\tilde{\mathbf{H}} = 2 H^2_{12}(s_1+s_2).
\end{equation}
According to the statements of Proposition \ref{Pr-1}, the necessary and sufficient conditions for the positive definiteness of the tensors $\mathbf{X}$ and $\mathbb{X}$ is the inequality $s_1+s_2>0$, which is confirmed by expression \eqref{15-6}.

In computational mechanics, second-order tensors are usually represented by column vectors, and fourth-order tensors by matrices. Following this rule, we introduce the following vectors and matrix in Voigt notation
\begin{equation*}
  [\mathbf{X}]\equiv \left[
                       \begin{array}{c}
                         X_{11} \\
                         X_{22} \\
                         X_{12} \\
                       \end{array}
                     \right], \quad
    [\mathbf{H}]\equiv \left[
                       \begin{array}{c}
                         H_{11} \\
                         H_{22} \\
                         2H_{12} \\
                       \end{array}
                     \right], \quad
    [\mathbf{C}] \equiv \left[
                       \begin{array}{ccc}
                         C_{11} & C_{12} & C_{13} \\
                         C_{21} & C_{22} & C_{23} \\
                         C_{31} & C_{32} & C_{33} \\
                       \end{array}
                     \right],
\end{equation*}
where $X_{ij}$ and $H_{ij}$ ($i,j=1,2$) are the components of the symmetric tensors $\mathbf{X}$ and $\mathbf{H}$ in the principal axes of the tensor $\mathbf{S}$, and $C_{mn}$ ($m,n=1,2,3$) are the components of the symmetric matrix $\mathbf{C}$, which are related to the components $\mathbb{X}_{ijkl}$ ($i,j,k,l=1,2$) of the tensor $\mathbb{X}$ in the principal axes of the tensor $\mathbf{S}$ by the following expressions:
\begin{align*}
  C_{11} &= \mathbb{X}_{1111},\quad  C_{12} = \mathbb{X}_{1122},\quad C_{13} = \mathbb{X}_{1112} = \mathbb{X}_{1121}, \\
  C_{21} &= \mathbb{X}_{2211},\quad  C_{22} = \mathbb{X}_{2222},\quad C_{23} = \mathbb{X}_{2212} = \mathbb{X}_{2221}, \notag \\
  C_{31} &= \mathbb{X}_{1211},\quad  C_{32} = \mathbb{X}_{1222},\quad C_{33} = \mathbb{X}_{1212} = \mathbb{X}_{1221} = \mathbb{X}_{2112} = \mathbb{X}_{2121}. \notag
\end{align*}
From \eqref{15-5} we obtain the values of the non-zero components of the tensor $\mathbb{X}(\mathbf{S})$ in the principal axes of the tensor $\mathbf{S}$
\begin{equation*}
  \mathbb{X}_{1212} = \mathbb{X}_{1221} = \mathbb{X}_{2112} = \mathbb{X}_{2121} = (s_1+s_2)/2,
\end{equation*}
that is, the matrix $\mathbf{C}$ has the following form:
\begin{equation}\label{AA-4}
      [\mathbf{C}] =  \left[
                       \begin{array}{ccc}
                         0 & 0 & 0 \\
                         0 & 0 & 0 \\
                         0 & 0 & (s_1+s_2)/2 \\
                       \end{array}
                     \right].
\end{equation}
We obtain a vector-matrix counterpart of the tensor expression \eqref{15-6} ($[\mathbf{X}]=[\mathbf{C}]\, [\mathbf{H}]$)
\begin{equation}\label{AA-5}
  [\mathbf{H}]^T\, [\mathbf{X}] = [\mathbf{H}]^T\,[\mathbf{C}]\, [\mathbf{H}] = 2 H^2_{12}(s_1+s_2).
\end{equation}
Naturally, the value of the contraction \eqref{AA-5} in the vector-matrix representation coincide with the value of the same contraction in the tensor representation \eqref{15-6}.
\end{example}

\subsection{Local body deformations and basic kinematics}
\label{sec:2-2}

Consider the motion of a body $\Omega$ in a three-dimensional Euclidean point space, and let $\mathbf{X}$ and $\mathbf{x}$ be the position vectors of some particle $P\in \Omega$ in the \emph{reference} (fixed at time $t_0$) and \emph{current} (moving at time $t$) configurations, respectively. Let $\mathbf{F}\equiv \text{Grad}\,\mathbf{x}= \partial \mathbf{x}/\partial \mathbf{X}\in \mathcal{T}^2$ ($J\equiv\det \mathbf{F} >0$) be the \emph{deformation gradient} (see, e.g., \cite{Bertram2021}).

We use the \emph{left (symmetric, positive definite, Eulerian) stretch tensor} $\mathbf{V}\equiv \sqrt{\mathbf{F}\cdot \mathbf{F}^T}$ as the main kinematic quantity. Let the eigenindex of the tensor $\mathbf{V}$ be equal to $m$. The spectral representations \eqref{3} and \eqref{4} can be written as
\begin{equation*}
\mathbf{V}=\sum_{k=1}^{3} \lambda_k\,\mathbf{n}_k\otimes\mathbf{n}_k=\sum^m_{i=1}\lambda_i \mathbf{V}_i,
\end{equation*}
where $0<\lambda_k<\infty,\ \lambda_k\in \mathds{R}$ ($k=1,2,3$) are the \emph{principal stretches}.

We define the \emph{Finger strain tensor} (see, e.g., \cite{CurnierET1991}) as
\begin{equation}\label{18}
\mathbf{e}^{(2)}\equiv \frac{1}{2}(\mathbf{c}-\mathbf{I})=\frac{1}{2}\sum_{k=1}^{3}(\lambda_k^2-1)\,\mathbf{n}_k\otimes\mathbf{n}_k=\frac{1}{2}\sum^m_{i=1}(\lambda_i^2 -1) \mathbf{V}_i,
\end{equation}
where $\mathbf{c}$ ($\mathbf{B}$ is the alternative standard notation) is the \emph{left Cauchy--Green deformation tensor}
\begin{equation}\label{19}
\mathbf{c}\equiv \mathbf{V}^2 = \mathbf{F}\cdot \mathbf{F}^T=\sum_{k=1}^{3}\lambda_k^2\,\mathbf{n}_k\otimes\mathbf{n}_k=\sum^m_{i=1}\lambda_i^2 \mathbf{V}_i.
\end{equation}

We define the \emph{volume ratio}
\begin{equation}\label{20}
  J\equiv \det \mathbf{F} = \lambda_1 \lambda_2 \lambda_3,
\end{equation}
and the \emph{modified principal stretches}
\begin{equation}\label{21}
  \bar{\lambda}_k\equiv \lambda_k/J^{1/3}\ (k=1,2,3)\ \Rightarrow \bar{J}=\bar{\lambda}_1 \bar{\lambda}_2 \bar{\lambda}_3 = 1,
\end{equation}
which correspond to the eigenvalues of the tensor $\bar{\mathbf{V}}$ due to the Richter--Flory multiplicative decomposition \index{Richter--Flory multiplicative decomposition} \cite{FloryTFS1961}\footnote{It is noted \cite{GrabanMMS2019} (see also \cite{NeffJMPS2025}) that this decomposition was first proposed by Richter in the late 1940s (see \cite{RichterZAMM1948,RichterZAMM1949,RichterAM1949,RichterMN1952}).} of the tensor $\mathbf{V}$
\begin{equation*}
  \mathbf{V}=\bar{\mathbf{V}}\cdot \check{\mathbf{V}},\quad\quad \bar{\mathbf{V}}\equiv J^{-1/3}\mathbf{V},\quad\quad \check{\mathbf{V}}\equiv J^{1/3} \mathbf{I}
\end{equation*}
into a unimodular tensor $\bar{\mathbf{V}}$ and a spherical tensor $\check{\mathbf{V}}$, which are responsible for distortional (isochoric) and volumetric (dilatational) deformations, respectively. We define the \emph{modified Finger strain tensor}
\begin{equation*}
\bar{\mathbf{e}}^{(2)}\equiv \frac{1}{2}(\bar{\mathbf{V}}^2-\mathbf{I}) = \frac{1}{2}(\bar{\mathbf{c}}-\mathbf{I}) = \frac{1}{2}\sum_{k=1}^{3}(\bar{\lambda}_k^2-1)\,\mathbf{n}_k\otimes\mathbf{n}_k = \frac{1}{2}\sum^m_{i=1}(\bar{\lambda}_i^2 -1) \mathbf{V}_i,
\end{equation*}
where
\begin{equation}\label{24}
  \bar{\mathbf{c}}\equiv \bar{\mathbf{V}}^2=J^{-2/3}\mathbf{c}.
\end{equation}
We will further need the \emph{deviator} of the tensor $\bar{\mathbf{e}}^{(2)}$
\begin{equation*}
  \text{dev}\,\bar{\mathbf{e}}^{(2)}\equiv \bar{\mathbf{e}}^{(2)} - \frac{1}{3} \text{tr}\,\bar{\mathbf{e}}^{(2)}\mathbf{I}\quad \Rightarrow \quad  \text{tr}(\text{dev}\,\bar{\mathbf{e}}^{(2)})= \text{dev}\,\bar{\mathbf{e}}^{(2)}:\mathbf{I}=0.
\end{equation*}
It can be shown that
\begin{equation}\label{26}
  \text{dev}\,\bar{\mathbf{e}}^{(2)}=\frac{1}{2}\text{dev}\,\bar{\mathbf{c}},\quad\quad \text{dev}\,\bar{\mathbf{c}}\equiv \bar{\mathbf{c}} - \frac{1}{3} (\text{tr}\,\bar{\mathbf{c}})\mathbf{I}\quad \Rightarrow \quad \text{tr}\,(\text{dev}\,\bar{\mathbf{c}})= \text{dev}\,\bar{\mathbf{c}}:\mathbf{I}=0.
\end{equation}

Hereinafter, we assume that all the tensors $\mathbf{h}\in \mathcal{T}^2$ are sufficiently smooth functions of the monotonically increasing parameter $t$ (time), and we define the \emph{material time derivative}(\emph{material rate}) of the tensor $\mathbf{h}$: $\dot{\mathbf{h}}\equiv\partial \mathbf{h}/\partial t$. We introduce the \emph{spatial velocity vector} $\mathbf{v}$, the \emph{spatial velocity gradient} $\boldsymbol{\ell}$
\begin{equation*}
\mathbf{v} \equiv \dot{\mathbf{x}},\quad\quad \boldsymbol{\ell} \equiv \text{grad}\, \mathbf{v}= \dot{\mathbf{F}} \cdot \mathbf{F}^{-1},
\end{equation*}
the symmetric Eulerian \emph{stretching (strain rate) tensor} $\mathbf{d}\in\mathcal{T}^2_{\text{sym}}$ and the skew-symmetric \emph{vorticity tensor}  $\mathbf{w}\in\mathcal{T}^2_{\text{skew}}$:\footnote{Since we assume that the motion law $\mathbf{x}(\mathbf{X},t)\in C^2$ of $t$, it follows that $\boldsymbol{\ell},\mathbf{d},\mathbf{w}\in C^0$ of $t$ (cf., \cite{ScheidlerMM1991}).}
\begin{equation}\label{28}
\boldsymbol{\ell}=\mathbf{d}+\mathbf{w},\quad\quad \mathbf{d} \equiv \text{sym}\,\boldsymbol{\ell}=\frac{1}{2}(\boldsymbol{\ell} + \boldsymbol{\ell}^T),\quad\quad \mathbf{w} \equiv
\text{skew}\,\boldsymbol{\ell}=\frac{1}{2} (\boldsymbol{\ell} - \boldsymbol{\ell}^T).
\end{equation}

It can be shown that the tensor $\mathbf{d}$ has the following representation of the form \eqref{7}, \eqref{8} (see, e.g., \cite{KorobeynikovAM2011}, Eq. $(115)_2$):
\begin{equation}\label{29}
\mathbf{d} = \hat{\mathbf{d}} + \tilde{\mathbf{d}},\quad\quad \hat{\mathbf{d}}\equiv \sum_{i=1}^{m}\frac{\dot{\lambda}_i}{\lambda_i}\, \mathbf{V}_i,\quad\quad
\tilde{\mathbf{d}}\equiv \sum_{i\neq j=1}^{m} \mathbf{V}_i\cdot \mathbf{d}\cdot\mathbf{V}_j.
\end{equation}
Here $\hat{\mathbf{d}}$ and $\tilde{\mathbf{d}}$ are the components of the tensor $\mathbf{d}$ that are coaxial and orthogonal to the tensor $\mathbf{V}$. Note also the validity of the following equality (see, e.g., \cite{deSouzaNeto2008}, Eq. (3.72)):
\begin{equation}\label{30}
  \dot{J}=J\, \text{tr}\,\mathbf{d}.
\end{equation}

For infinitesimal strains, the following approximate equalities hold (hereinafter, $\boldsymbol{\varepsilon}\equiv \text{sym}\,\text{D}u$ is the \emph{infinitesimal strain tensor}, where $u$ is the displacement vector):
\begin{equation*}
  \mathbf{e}^{(2)}\approx \bar{\mathbf{e}}^{(2)}\approx \boldsymbol{\varepsilon},\quad\quad \mathbf{d}\approx \dot{\boldsymbol{\varepsilon}},\quad\quad \dot{J}/J =\text{tr}\,\mathbf{d}\approx \text{tr}\,\dot{\boldsymbol{\varepsilon}}.
\end{equation*}

\begin{remark}
\label{rem:2-2}
The kinematic tensors $\mathbf{V}$, $\bar{\mathbf{V}}$, $\mathbf{e}^{(2)}$, $\bar{\mathbf{e}}^{(2)}$, $\mathbf{c}$, $\bar{\mathbf{c}}$, and $\mathbf{d}$ are Eulerian objective tensors (cf., \cite{KorobeynikovJElast2008,Ogden1984}).
\end{remark}

\subsection{Stress tensors and their rates}
\label{sec:2-3}

We define the (\emph{true}) \emph{Cauchy stress tensor} $\boldsymbol{\sigma}$ and the (\emph{weighted})\emph{Kirchhoff stress tensor}  $\boldsymbol{\tau}$:
\begin{equation}\label{32}
  \boldsymbol{\tau}\equiv J\, \boldsymbol{\sigma}.
\end{equation}
We also define the (\emph{engineering, nominal}) \emph{first Piola--Kirchhoff} (\emph{1st PK}) \emph{stress tensor} $\mathbf{P}$:
\begin{equation}\label{33}
  (\mathbf{S}_1=)\,\mathbf{P}\equiv \boldsymbol{\tau}\cdot \mathbf{F}^{-T}=J \boldsymbol{\sigma} \cdot \mathbf{F}^{-T}= \boldsymbol{\sigma} \cdot \text{Cof}\,\mathbf{F},
\end{equation}
whose components are usually determined in experimental studies. For hyperelastic materials, the 1st PK stress tensor is determined from the elastic energy $W(\mathbf{F})$
\begin{equation*}
  (\mathbf{S}_1(\mathbf{F})=)\, \mathbf{P}(\mathbf{F}) = \text{D}_F W(\mathbf{F}).
\end{equation*}

\begin{remark}
\label{rem:2-3}
The stress tensors $\boldsymbol{\sigma}$ and $\boldsymbol{\tau}$ are Eulerian objective tensors.\\
\end{remark}

The material rates $\dot{\boldsymbol{\sigma}}$ and $\dot{\boldsymbol{\tau}}$ do not have the Eulerian objectivity property. As the objective rates of the stress tensors we use the Eulerian \emph{Zaremba--Jaumann} (corotational) and \emph{upper Oldroyd} (noncorotational) \emph{stress rates} for the Kirchhoff stress tensor
\begin{equation}\label{34}
   \frac{\text{D}^{\text{ZJ}}}{\text{D}t}[\boldsymbol{\tau}]\equiv \dot{\boldsymbol{\tau}} + \boldsymbol{\tau}\cdot \mathbf{w} - \mathbf{w}\cdot \boldsymbol{\tau},\quad\quad\quad
    \frac{\text{D}^{\overline{\text{Old}}}}{\text{D}t}[\boldsymbol{\tau}]\equiv \dot{\boldsymbol{\tau}} - \boldsymbol{\tau}\cdot \boldsymbol{\ell}^T - \boldsymbol{\ell}\cdot \boldsymbol{\tau},
\end{equation}
and, likewise, for the Cauchy stress tensor
\begin{equation}\label{35}
    \frac{\text{D}^{\text{ZJ}}}{\text{D}t}[\boldsymbol{\sigma}]\equiv \dot{\boldsymbol{\sigma}} + \boldsymbol{\sigma}\cdot \mathbf{w} - \mathbf{w}\cdot \boldsymbol{\sigma},\quad\quad\quad
    \frac{\text{D}^{\overline{\text{Old}}}}{\text{D}t}[\boldsymbol{\sigma}]\equiv \dot{\boldsymbol{\sigma}} - \boldsymbol{\sigma}\cdot \boldsymbol{\ell}^T - \boldsymbol{\ell}\cdot \boldsymbol{\sigma}.
\end{equation}
These rates are related by the equality (see, e.g., \cite{Korobeynikov2023,KorobeynikovZAMM2024}):
\begin{equation}\label{36}
 \frac{\text{D}^{\text{ZJ}}}{\text{D}t}[\boldsymbol{\tau}]=J \frac{\text{D}^{\text{BH}}}{\text{D}t}[\boldsymbol{\sigma}],\quad\quad \frac{\text{D}^{\overline{\text{Old}}}}{\text{D}t}[\boldsymbol{\tau}]=J \frac{\text{D}^{\text{Tr}}}{\text{D}t}[\boldsymbol{\sigma}],
\end{equation}
where the tensors $\frac{\text{D}^{\text{BH}}}{\text{D}t}[\boldsymbol{\sigma}]$ and $\frac{\text{D}^{\text{Tr}}}{\text{D}t}[\boldsymbol{\sigma}]$ are the (Eulerian objective) \emph{Biezeno--Hencky} \cite{Biezeno1928} (or \emph{Hill} \cite{HillJMPS1958}) and \emph{Truesdell} (see, e.g., \cite{Bertram2021}) \emph{stress rates} for the Cauchy stress tensor
\begin{align}\label{37}
    \frac{\text{D}^{\text{BH}}}{\text{D}t}[\boldsymbol{\sigma}]\equiv & \,\dot{\boldsymbol{\sigma}} + \boldsymbol{\sigma}\cdot \mathbf{w} - \mathbf{w}\cdot \boldsymbol{\sigma}  + \boldsymbol{\sigma}\,\text{tr}\,\mathbf{d}\,(=\frac{\text{D}^{\text{ZJ}}}{\text{D}t}[\boldsymbol{\sigma}]+\boldsymbol{\sigma}\,\text{tr}\,\mathbf{d}), \\
    \frac{\text{D}^{\text{Tr}}}{\text{D}t}[\boldsymbol{\sigma}]\equiv & \,\dot{\boldsymbol{\sigma}}- \boldsymbol{\sigma}\cdot \boldsymbol{\ell}^T - \boldsymbol{\ell}\cdot \boldsymbol{\sigma}  +  \boldsymbol{\sigma}\,\text{tr}\,\mathbf{d}\, (=\frac{\text{D}^{\overline{\text{Old}}}}{\text{D}t}[\boldsymbol{\sigma}]+\boldsymbol{\sigma}\,\text{tr}\,\mathbf{d}). \notag
\end{align}

For infinitesimal strains, the following approximate equalities hold:
\begin{equation*}
  \boldsymbol{\tau}\approx \mathbf{P}\approx \boldsymbol{\sigma},\quad\quad\quad \frac{\text{D}^{\text{ZJ}}}{\text{D}t}[\boldsymbol{\tau}]\approx  \frac{\text{D}^{\text{ZJ}}}{\text{D}t}[\boldsymbol{\sigma}] \approx \frac{\text{D}^{\text{BH}}}{\text{D}t}[\boldsymbol{\sigma}]\approx \dot{\boldsymbol{\sigma}}.
\end{equation*}

\subsection{Elastic energy and constitutive relations for the linear isotropic elastic model}
\label{sec:2-4}

Since for infinitesimal strains, the constitutive relations for neo-Hookean models reduce to the constitutive relations for the linear isotropic material model, in this section we provide expressions for the elastic energy and constitutive relations for the latter model as sources inspiring the derivation of corresponding expressions for the former models. In particular, in Section \ref{sec:2-4-1}, we give expressions for the elastic energy and constitutive relations for the incompressible linear isotropic material model, and in Sections \ref{sec:2-4-2} and \ref{sec:2-4-3}, we present two possible representations of constitutive relations and elastic energy for the compressible linear isotropic material model with coupled (Section \ref{sec:2-4-2}) and decoupled (Section \ref{sec:2-4-3}) forms of elastic energy.

\subsubsection{Constitutive relations for incompressible materials}
\label{sec:2-4-1}

The elastic energy for incompressible linear isotropic materials can be written as
\begin{equation}\label{39}
  W_{\text{lin-inc}}\equiv \mu\, \boldsymbol{\varepsilon}:\boldsymbol{\varepsilon} - p\,\text{tr}\,\boldsymbol{\varepsilon} = \mu \|\boldsymbol{\varepsilon}\|^2 - p\,\text{tr}\,\boldsymbol{\varepsilon},
\end{equation}
where $p$ is the indefinite Lagrange multiplier and $\mu>0$ is the \emph{shear modulus} {first Lam\'{e} parameter}. For these materials, the incompressibility condition holds:
\begin{equation}\label{40}
  \text{tr}\,\boldsymbol{\varepsilon}=0.
\end{equation}
Note that for infinitesimal strains, $\text{tr}\,\boldsymbol{\varepsilon}\approx J-1$ holds; i.e., the scalar $\text{tr}\,\boldsymbol{\varepsilon}$ corresponds to the \emph{relative volume ratio} (\emph{dilatation}).

The constitutive relations for this material model can be written as
\begin{equation}\label{41}
  \boldsymbol{\sigma}=\frac{\partial\,W_{\text{lin-inc}}}{\partial\,\boldsymbol{\varepsilon}}\quad \Leftrightarrow \quad \boldsymbol{\sigma}=2\mu\, \boldsymbol{\varepsilon} - p\,\mathbf{I}.
\end{equation}
The infinitesimal strain tensor can be represented as
\begin{equation}\label{42}
  \boldsymbol{\varepsilon}=\text{dev}\,\boldsymbol{\varepsilon} + \frac{1}{3}\text{tr}\,\boldsymbol{\varepsilon}\,\mathbf{I},\quad\quad \text{dev}\,\boldsymbol{\varepsilon} \equiv \boldsymbol{\varepsilon} - \frac{1}{3}\text{tr}\,\boldsymbol{\varepsilon}\,\mathbf{I}.
\end{equation}
Since for incompressible materials, condition \eqref{40} is satisfied, it follows from \eqref{42} that $\boldsymbol{\varepsilon}=\text{dev}\,\boldsymbol{\varepsilon}$ for these materials, and from \eqref{41} we obtain
\begin{equation}\label{43}
  \frac{1}{3}\text{tr}\,\boldsymbol{\sigma}(\equiv \sigma_m)=-p\quad\Rightarrow\quad \sigma_m=-p,
\end{equation}
where $\sigma_m$ is the \emph{mean stress}. It follows from $\eqref{43}_2$ that the Lagrange multiplier $p$ for infinitesimal strains of an incompressible isotropic linear elastic material has the mechanical meaning of the \emph{hydrostatic pressure} in the material.

\subsubsection{Constitutive relations for compressible materials: coupled representation of the elastic energy}
\label{sec:2-4-2}

We represent the elastic energy for linear compressible isotropic materials in the \emph{coupled} form
\begin{equation}\label{44}
  W_{\text{lin-mixed}}\equiv \mu\, \boldsymbol{\varepsilon}:\boldsymbol{\varepsilon} + \frac{\lambda}{2} \text{tr}^2\boldsymbol{\varepsilon} = \mu\, \|\boldsymbol{\varepsilon}\|^2 + \frac{\lambda}{2} \text{tr}^2\boldsymbol{\varepsilon},
\end{equation}
where $\lambda$ is the \emph{second Lam\'{e} parameter}. We call this form coupled since the summands on the r.h.s. of \eqref{44} are not decoupled into volumetric and isochoric strains; i.e., the second summand on the r.h.s. of \eqref{44} depends only on volumetric strain, but the first summand depends on both volumetric and isochoric strains.

The constitutive relations for this material model can be written as
\begin{equation}\label{45}
  \boldsymbol{\sigma}=\frac{\partial\,W_{\text{lin-mixed}}}{\partial\,\boldsymbol{\varepsilon}}\quad \Leftrightarrow \quad \boldsymbol{\sigma} =  2\mu\, \boldsymbol{\varepsilon} + \lambda\,\text{tr}\,\boldsymbol{\varepsilon}\,\mathbf{I}.
\end{equation}
Note the formal similarity between the right-hand sides of $\eqref{41}_2$ and $\eqref{45}_2$ under the following identifications: $- p \mathbf{I}\ \leftrightarrow\ \lambda\,\text{tr}\,\boldsymbol{\varepsilon}\,\mathbf{I}$. However, the mean stress now depends not only on $\lambda$, but also on $\mu$ due to the equality
\begin{equation}\label{46}
  \sigma_m=(\lambda+\frac{2}{3}\mu)\,\text{tr}\,\boldsymbol{\varepsilon} = K \,\text{tr}\,\boldsymbol{\varepsilon},
\end{equation}
where $K\equiv \lambda+\frac{2}{3}\mu$ is the \emph{bulk modulus}.

\subsubsection{Constitutive relations for compressible materials: decoupled representation of the elastic energy}
\label{sec:2-4-3}

The \emph{decoupled} representations of the elastic energy and constitutive relations are obtained from the expressions in Section \ref{sec:2-4-2} using the additive separation of the Cauchy strain tensor into the deviatoric and spherical components in the form $\eqref{42}_1$. In this representation, the elastic energy \eqref{44} is rewritten as
\begin{equation}\label{47}
  W_{\text{lin-vol-iso}}(=W_{\text{lin-mixed}})\equiv \mu\, \text{dev}\,\boldsymbol{\varepsilon} : \text{dev}\,\boldsymbol{\varepsilon} + \frac{K}{2} (\text{tr}\,\boldsymbol{\varepsilon})^2 = \mu\, \|\text{dev}\,\boldsymbol{\varepsilon}\|^2 + \frac{K}{2} \text{tr}^2\boldsymbol{\varepsilon}.
\end{equation}
The second summand on the r.h.s. of \eqref{47} corresponds to volumetric strain, and the first summand corresponds to isochoric strain.

The constitutive relations \eqref{45} can be rewritten as
\begin{equation}\label{48}
  \boldsymbol{\sigma}=\frac{\partial\,W_{\text{lin-vol-iso}}}{\partial\,\boldsymbol{\varepsilon}}\quad \Leftrightarrow \quad \boldsymbol{\sigma} = 2\mu\, \text{dev}\,\boldsymbol{\varepsilon} + K\,\text{tr}\,\boldsymbol{\varepsilon}\,\mathbf{I},
\end{equation}
whence we obtain the expression for the mean stress
\begin{equation*}
  \sigma_m(\equiv\frac{1}{3}\text{tr}\,\boldsymbol{\sigma})=K\,\text{tr}\,\boldsymbol{\varepsilon},
\end{equation*}
which naturally coincides with \eqref{46}.

\section{Constitutive relations for neo-Hookean isotropic hyperelastic material models}
\label{sec:3}

In this chapter we present constitutive relations for three types of \emph{neo-Hookean isotropic hyperelastic material models}: incompressible (classical) model (Section \ref{sec:3-1}), compressible mixed models (Section \ref{sec:3-2}), and compressible vol-iso models (Section \ref{sec:3-3}). The formulations of these three types of models are discussed in Section \ref{sec:3-4}. Although all three types of models are presented in one form or another in the literature (see Section \ref{sec:1}), the objectives of the present section is to unify them in the Eulerian formulation and relate them to the linear isotropic elastic models formulated in Section \ref{sec:2-4}.

\subsection{Constitutive relations for the incompressible neo-Hookean isotropic hyperelastic material model}
\label{sec:3-1}

We represent the incompressible \emph{neo-Hookean} \emph{material model} as the one-power generalized Ogden material model (cf., \cite{OgdenPRSLA1972a}) for $n=2$ with the modified expression for the elastic energy (cf., \cite{KorobeynikovAAM2025})
\begin{equation}\label{50}
   W_{\text{neo-Hooke}}(\lambda_1,\lambda_2,\lambda_3)\equiv \mu\, (\text{tr}\,\mathbf{e}^{(2)} - \ln J) - p \ln J = \frac{\mu}{2}(\|\mathbf{F}\|^2 - 3 - 2\ln J) - p \ln J,
\end{equation}
where the parameter $p$ is the indefinite Lagrange multiplier. According to \eqref{18}, the quantity $\text{tr}\,\mathbf{e}^{(2)}$ is defined as
\begin{equation}\label{51}
  \text{tr}\,\mathbf{e}^{(2)}= \frac{1}{2}(\text{tr}\,\mathbf{c}-3)=\frac{1}{2}(\lambda_1^2+\lambda_2^2+\lambda_3^2-3) = \frac{1}{2}(\|\mathbf{F}\|^2-3).
\end{equation}
Since $\ln J=\ln \lambda_1 + \ln \lambda_2 + \ln \lambda_3$, the potential function \eqref{50} satisfies the Valanis--Landel hypothesis \cite{ValanisIJSS2022,ValanisJAP1967}, i.e.,
\begin{align*}
  W_{\text{neo-Hooke}}(\lambda_1,\lambda_2,\lambda_3) &= \sum_{k=1}^{3}\check{W}_{\text{neo-Hooke}}(\lambda_k),\\
  \check{W}_{\text{neo-Hooke}}(\lambda) &\equiv  \mu\,[\frac{1}{2}(\lambda^2 -1) - \ln \lambda] -p \ln \lambda. \notag
\end{align*}
It has been shown \cite{KorobeynikovAAM2025} that for infinitesimal strains, the elastic energy \eqref{50} reduces to the elastic energy \eqref{39}.

Since for isotropic hyperelastic materials, the principal directions of the tensors $\boldsymbol{\sigma}$ and $\mathbf{V}$ (and $\mathbf{c}$) coincide (see, e.g., \cite{Bertram2021}), the Cauchy stress tensor $\boldsymbol{\sigma}$ can be represented as
\begin{equation*}
\boldsymbol{\sigma}=\sum_{k=1}^{3} \sigma_k\,\mathbf{n}_k\otimes\mathbf{n}_k=\sum^m_{i=1}\sigma_i \mathbf{V}_i,
\end{equation*}
where $\sigma_k$ ($k=1,2,3$) or $\sigma_i$ ($i=1,\ldots,m$) are the \emph{principal Cauchy stresses} defined as follows (see, e.g., \cite{KorobeynikovJElast2019,OgdenPRSLA1972a,Ogden1984}):
\begin{equation}\label{54}
  \sigma_k=\lambda_k\frac{\partial\,W_{\text{neo-Hooke}}}{\partial\,\lambda_k}\ (k=1,2,3)\quad \Leftrightarrow \quad    \sigma_i=\lambda_i\frac{\partial\,W_{\text{neo-Hooke}}}{\partial\,\lambda_i}\ (i=1,\ldots,m).
\end{equation}
Basis-free expressions for the stress tensor $\boldsymbol{\sigma}$ can be obtained from \eqref{50}, \eqref{51}, and \eqref{54} \cite{KorobeynikovAAM2025}
\begin{equation}\label{55}
    \boxed{\boldsymbol{\sigma}= 2\mu\,\mathbf{e}^{(2)} - p\,\mathbf{I} = \mu\,(\mathbf{c}-\mathbf{I}) - p\,\mathbf{I}}.
\end{equation}
It has been shown \cite{KorobeynikovAAM2025} that for infinitesimal strains, the constitutive relations \eqref{55} reduce to the constitutive relations $\eqref{41}_2$ for linear elastic materials.

To determine the mean stress $\sigma_m$, we rewrite the constitutive relations \eqref{55} as
\begin{equation}\label{56}
    \boldsymbol{\sigma}= 2\mu\,\text{dev}\,\mathbf{e}^{(2)}+\frac{2\mu}{3} \text{tr}\,\mathbf{e}^{(2)}\,\mathbf{I} - p\,\mathbf{I},
\end{equation}
where $\text{dev}\,\mathbf{e}^{(2)}$ is the Finger strain tensor deviator defined as \index{Strain tensor!Finger!deviator}
\begin{equation*}
   \text{dev}\,\mathbf{e}^{(2)} \equiv \mathbf{e}^{(2)} - \frac{1}{3}\text{tr}\,\mathbf{e}^{(2)}\,\mathbf{I}.
\end{equation*}
The mean stress $\sigma_m\, (\equiv \text{tr}\,\boldsymbol{\sigma}/3$) can be obtained from \eqref{56}:
\begin{equation}\label{58}
   \sigma_m = - p + \frac{2}{3}\mu\, \text{tr}\,\mathbf{e}^{(2)}.
\end{equation}
Note that unlike the linear elastic incompressible isotropic material model, the Lagrange multiplier $p$ for the neo-Hookean material model can no longer be interpreted as the hydrostatic pressure ($-\sigma_m$). However, for infinitesimal strains,
\begin{equation*}
  \text{tr}\,\mathbf{e}^{(2)}\approx  \text{tr}\,\boldsymbol{\varepsilon}\approx 0,
\end{equation*}
and in this case, equality \eqref{58} becomes equality \eqref{43}; i.e., $\sigma_m= - p$.

\subsection{Constitutive relations for the compressible mixed isotropic hyperelastic neo-Hookean material models}
\label{sec:3-2}

We use the following form of the elastic energy for any \emph{compressible mixed (vol-iso coupled) neo-Hookean} material model (a \emph{mixed} material model) (cf., \cite{KorobeynikovAAM2023,OgdenPRSLA1972b,SimoCMAME1984,Wriggers2008}):
\begin{equation}\label{60}
  W_{\text{mixed}}(\lambda_1,\lambda_2,\lambda_3)\equiv \mu (\text{tr}\,\mathbf{e}^{(2)} - \ln J) + \lambda\, h(J) = \frac{\mu}{2}(\|\mathbf{F}\|^2 - 3 - 2\ln J) + \lambda\, h(J).
\end{equation}
Here $h(J)$ is a scalar \emph{volumetric function} of $J$, whose properties and forms will be considered below\footnote{We now require that for infinitesimal strains, $h(J)\approx \frac{1}{2}(J-1)^2\approx \frac{1}{2}\text{tr}^2\boldsymbol{\varepsilon}$.} and the quantity $\text{tr}\,\mathbf{e}^{(2)}$ is defined in \eqref{51}.

For isotropic hyperelastic materials, the Kirchhoff stress tensor is represented as
\begin{equation}\label{61}
\boldsymbol{\tau}=\sum_{k=1}^{3} \tau_k\,\mathbf{n}_k\otimes\mathbf{n}_k=\sum^m_{i=1}\tau_i \mathbf{V}_i,
\end{equation}
i.e., the principal axes of the tensors $\boldsymbol{\tau}$ and $\mathbf{V}$ coincide. The constitutive relations for this material model can be written as relations between the principal Kirchhoff stresses and the principal stretches (see, e.g., \cite{KorobeynikovJElast2019,Ogden1984})
\begin{equation}\label{62}
  \tau_k=\lambda_k\frac{\partial\,W_{\text{mixed}}}{\partial\,\lambda_k}\ (k=1,2,3)\quad \Leftrightarrow \quad    \tau_i=\lambda_i\frac{\partial\,W_{\text{mixed}}}{\partial\,\lambda_i}\ (i=1,\ldots,m).
\end{equation}
Based on \eqref{61} and \eqref{62}, the Kirchhoff stress tensor can be represented in terms of kinematic quantities by the following basis-free expression (for details, see  \cite{KorobeynikovJElast2021}):
\begin{equation}\label{63}
    \boldsymbol{\tau}= 2\mu\,\mathbf{e}^{(2)} + \lambda\, J h^{\prime}(J) \mathbf{I} =  \mu(\mathbf{c}-\mathbf{I}) + \lambda\, J h^{\prime}(J)  \mathbf{I},
\end{equation}
where
\begin{equation}\label{64}
   h^{\prime}(J)\equiv \frac{d\,h(J)}{d\,J}.
\end{equation}
Based on \eqref{32} and \eqref{63}, the Cauchy stress tensor can be represented by the basis-free expression
\begin{equation}\label{65}
    \boxed{\boldsymbol{\sigma}= \frac{2\mu}{J}\mathbf{e}^{(2)} + \lambda\, h^{\prime}(J) \mathbf{I}  = \frac{\mu}{J}(\mathbf{c}-\mathbf{I}) + \lambda\, h^{\prime}(J) \mathbf{I}.}
\end{equation}

\subsection{Constitutive relations for the compressible vol-iso isotropic hyperelastic neo-Hookean material models}
\label{sec:3-3}

Following \cite{deSouzaNeto2008}, we write the elastic energy for any \emph{compressible vol-iso neo-Hookean} material model (the \emph{vol-iso} material model) as
\begin{equation}\label{66}
  W_{\text{vol-iso}}(\lambda_1,\lambda_2,\lambda_3)\equiv \mu\, (\text{tr}\,\bar{\mathbf{e}}^{(2)} - \ln \bar{J}) + K h(J) = \frac{\mu}{2}(\|\frac{\mathbf{F}}{J^{1/3}}\|^2 - 3) + K h(J).
\end{equation}

The principal components of the Kirchhoff stress tensor $\boldsymbol{\tau}$ are determined using expressions \eqref{62} with the replacement of elastic energy $W_{\text{mixed}}$ by $W_{\text{vol-iso}}$. As a result, we obtain the following basis-free expression for the Kirchhoff stress tensor (for details, see \cite{deSouzaNeto2008}):
\begin{equation}\label{67}
    \boldsymbol{\tau}= 2\mu\,\text{dev}\,\bar{\mathbf{e}}^{(2)} + K J h^{\prime}(J) \mathbf{I} = \mu\,\text{dev}\,\bar{\mathbf{c}} + K\, J h^{\prime}(J) \mathbf{I},
\end{equation}
here we also used equality $\eqref{26}_1$. Equations \eqref{32} and \eqref{67} lead to the following expression for the Cauchy stress tensor:
\begin{equation}\label{68}
    \boxed{\boldsymbol{\sigma}= \frac{2\mu}{J}\,\text{dev}\,\bar{\mathbf{e}}^{(2)} + K\, h^{\prime}(J) \mathbf{I} = \frac{\mu}{J}\,\text{dev}\,\bar{\mathbf{c}} + K\, h^{\prime}(J) \mathbf{I}}.
\end{equation}

\begin{remark}
\label{rem:3-1}
We can show the validity of the equalities
\begin{equation*}
  \frac{\partial\, \bar{J}}{\partial\,\lambda_1}=\frac{\partial\, \bar{J}}{\partial\,\lambda_2}=\frac{\partial\, \bar{J}}{\partial\,\lambda_3}=0,
\end{equation*}
whence the first summand on the r.h.s. of \eqref{66} can be rewritten as $\mu\, \text{tr}\,\bar{\mathbf{e}}^{(2)}$, but expression \eqref{67} for the Kirchhoff stress tensor $\boldsymbol{\tau}$ does not change in this case. We retained the term $\ln \bar{J}$ in expression \eqref{66} to emphasize that this expression for compressible materials is similar to expression \eqref{50} for incompressible materials.
\end{remark}

\subsection{Discussion of expressions for compressible neo-Hookean material models}
\label{sec:3-4}

For infinitesimal strains, the following equality holds:
\begin{equation*}
   \ln J \approx J-1 \approx \text{tr}\,\boldsymbol{\varepsilon}.
\end{equation*}
It has been shown \cite{KorobeynikovAAM2025} that for these strains, expression \eqref{50} for the elastic energy reduces to expression \eqref{39} and expression \eqref{55} for constitutive relations reduces to expression $\eqref{41}_2$. In addition, for small strains, the following approximations are valid:
\begin{equation*}
   h(J) \approx \frac{1}{2}(\ln J)^2 \approx  \frac{1}{2}(\text{tr}\,\boldsymbol{\varepsilon})^2\quad \Rightarrow\quad h^{\prime}(J)\approx J h^{\prime}(J)\approx \ln J \approx \text{tr}\,\boldsymbol{\varepsilon}.
\end{equation*}
Then for infinitesimal strains, expression \eqref{60} for the elastic energy for any mixed material model becomes expression \eqref{44}, and expression \eqref{66} for the elastic energy for any vol-iso material model becomes expression \eqref{47}. In addition, for these strains, expressions \eqref{63} and \eqref{65} for constitutive relations become expression $\eqref{45}_2$, and expressions \eqref{67} and \eqref{68} for constitutive relations become expression $\eqref{48}_2$.

Since, within the framework of linear elasticity theory, both the elastic energy expressions \eqref{44} and \eqref{47} and the constitutive relations \eqref{45} and \eqref{48} are equivalent, the mixed and vol-iso models for infinitesimal strains reduce to the same linear compressible isotropic elastic material model. However, in the general case of strains of arbitrary magnitude, the mixed and vol-iso models are different models of compressible isotropic hyperelastic materials. In particular, the elastic energy \eqref{60} for any mixed material model is an additive decomposition into an uncoupled component (first summand on the r.h.s. of \eqref{60}) and a coupled component (second summand on the r.h.s. of \eqref{60}) with respect to $\lambda_k$ ($k=1,2,3$) (cf., \cite{KellermannZAMM2016}). That is, the elastic energy for any mixed model is consistent with the Valanis--Landel hypothesis \cite{ValanisJAP1967} extended to compressible materials in \cite{ValanisIJSS2022}.\footnote{Valanis \cite{ValanisIJSS2022} requires such decomposition of the elastic energy into uncoupled and coupled components corresponding to the mixed (isochoric and volumetric) and pure volumetric energies.} At the same time, the elastic energy for any vol-iso model is inconsistent with this hypothesis; however, the elastic energy \eqref{66} for this model corresponds to the additive decomposition into a volumetric component (second summand on the right-hand side of \eqref{66}) and an isochoric component (first summand on the right-hand side of \eqref{66}). That is, any vol-iso model allows the uncoupled representation of the elastic energy as a sum of volumetric and isochoric strains, but the mixed model does not allow this representation. With regard to the above decompositions in the elastic energy expressions, we cannot give preference to either one of these two models, since both types of decomposition have not been confirmed experimentally. In particular, experimental studies by Vangerko and Treloar \cite{VangerkoJPD1978} have shown that the Valanis--Landel hypothesis is not valid for sufficiently large values of the principal stretches ($\gtrsim 3$). At the same time, it has been shown \cite{PennTSR1970} that the additive decomposition of the elastic energy into volumetric and isochoric parts is inconsistent with experimental data on the dependence of the dilatation $J-1$ on longitudinal extension under uniaxial loading.

Next, along with the Lam\'{e} parameters $\lambda$ and $\mu$, we will also use more physically reasonable parameters --- \emph{Young's modulus} $E$ and \emph{Poisson's ratio $\nu$}, which are related to the Lam\'{e} parameters as follows (see, e.g., Table 5 in \cite{Rubin2021}):
\begin{equation}\label{72}
  \mu=\frac{E}{2(1+\nu)},\quad\lambda=\frac{E\nu}{(1+\nu)(1-2\nu)}\quad \Leftrightarrow\quad E=\frac{\mu\,(3\lambda+2\mu)}{\lambda+\mu},\quad \nu=\frac{\lambda}{2(\lambda+\mu)}.
\end{equation}

We now perform a preliminary analysis to determine admissible values of the parameters $\lambda$ and $\mu$ for the mixed and vol-iso models. Since for infinitesimal strains, both types of models reduce to the linear elastic compressible material model, we use the well-known constraints on the parameters $K$ and $\mu$ (see, e.g., \cite{Batra2006})
\begin{equation}\label{73}
  \mu>0,\quad\quad K=\lambda+2\mu/3>0,
\end{equation}
under which the potential energy $W_{\text{lin-vol-iso}}$ in \eqref{47} is positive definite, i.e., $\varepsilon \rightarrow W_{\text{lin}}(\varepsilon)$ is strictly convex. In view of expressions \eqref{72}, the constraints \eqref{73} can be represented in terms of $\mu$ and $\nu$:
\begin{equation}\label{74}
  \mu>0,\quad\quad -1<\nu\leq 0.5.
\end{equation}
For isotropic hyperelastic models with elastic energies of the form \eqref{66}, the convexity condition will be imposed on the function $K h(J)$ \cite{HartmannIJSS2003}. Since we assume that $K>0$ (see \eqref{73}), we further consider convex functions $h(J)$. Next, we use identical functions $h(J)$ for both material models (mixed and vol-iso) and relax the constraint on the function $\lambda h(J)$ for the mixed model: we replace the convexity condition by the requirement of non-concavity; i.e., for any mixed model, the parameters $\lambda$ and $\mu$ (or $\nu$ and $\mu$) are subjected to the constraints
\begin{equation}\label{75}
  \mu>0,\ \lambda \geq 0\quad \Leftrightarrow \quad \mu>0,\ 0\leq \nu \leq 0.5,
\end{equation}
while for any vol-iso model, the constraints on the parameters $\nu$ and $\mu$ are still given by \eqref{74}. Note that by our convention, in both constraints \eqref{74} and \eqref{75}, the value $\nu = 0.5$ means the use of the classical neo-Hookean model for incompressible materials instead of the mixed and vol-iso models.

Particular attention should be given to the performance analysis of the mixed and vol-iso models for slightly compressible materials characterized by Poisson's ratio $\nu\approx 0.5^-$ (e.g., $\nu=0.4999$). First, all elastomers in practice are slightly compressible materials, and, second, even if a researcher wants to perform computer simulations of deformations of incompressible materials using some commercial FE systems, he/she will be faced with the fact that these systems use slightly compressible approximations, rather than purely incompressible materials, to impose the incompressibility condition by means of the penalty function method. Ideally, the mixed and vol-iso compressible material models in the case of slight compressibility should lead to similar solutions for stresses and kinematic quantities that approximate the corresponding solutions for incompressible materials. We now find constraints on the model parameters that should lead to this result.

We assume that the principal stretches $\lambda_k$ ($k=1,2,3$) for slightly compressible materials are similar to those for incompressible materials. Since in the last case, $J=1$, it follows that for slightly compressible materials, the approximate equality $J\approx 1$ should hold. In this case, \eqref{21} lead to the approximate equalities $\bar{\lambda}_k\approx \lambda_k$ ($k=1,2,3$). Hence for slightly compressible materials, the first terms on the right-hand sides of \eqref{60} and \eqref{66} should approximate  the first term on the r.h.s. of \eqref{50}.

Since for slightly compressible materials, the \emph{dilatation} $J-1$ is small, functions $h(J)$ satisfy the approximations (see Section \ref{sec:4}):
\begin{equation*}
  h(J)\approx \frac{1}{2}(\ln J)^2\approx \frac{1}{2}(J-1)^2.
\end{equation*}
We assume that the potential energies of volumetric and distortional strains have comparable values. Then in view of \eqref{60} and \eqref{66}, the quantities $\lambda$ and $K$ should far exceed the shear modulus. It follows from \eqref{72} that this condition should be satisfied for values of Poisson's ratio close to 0.5. Typically, in computer simulations of deformations of incompressible materials, it is assumed that $\nu=0.4999$. In this case,
\begin{equation*}
  \frac{\lambda}{\mu}=\frac{2\nu}{1-2\nu} \approx 5000;
\end{equation*}
i.e., $\lambda\approx K \approx 5000\mu$.

We assume that for solutions of boundary-value problems of deformations for slightly compressible hyperelastic materials,
\begin{equation}\label{78}
  \lim \limits_{\nu \to 0.5^-} \frac{\lambda(\nu)}{2}\ln J(\nu)=-p,
\end{equation}
which is in fact equivalent to the application of the penalty function method to the solution of deformation problems for incompressible hyperelastic materials. Comparing the first terms on the right-hand sides of \eqref{60} and \eqref{66} with the first term on the right-hand side of \eqref{50}, we conclude that simulations of deformations using both mixed and vol-iso models in the case of slight compressibility approximate similar simulations using the neo-Hookean model provided that equality \eqref{78} holds.
\vspace*{2mm}

\begin{remark}
\label{rem:3-2}
Note that for slightly compressible materials, both types of models (mixed and vol-iso) can only be used to determine the Cauchy stress tensor $\boldsymbol{\sigma}$ and the principal stretches $\lambda_k$ ($k=1,2,3$). However, the use of the obtained values of the principal stretches generally does not lead to a correct determination of the volume ratio from Eq. \eqref{20}, as shown by the experimental studies of Penn \cite{PennTSR1970}. The reason for the disagreement between the values of the quantity $J$ obtained using standard compressible material models and experimental data is the difference in the scales of mechanical quantities; i.e., although the values of $\lambda_k$ ($k=1,2,3$) can differ greatly from 1, their product in \eqref{20} is close to 1. Various modifications of elastic energies for vol-iso models have been proposed to improve their performance compared to experimental data (see e.g., \cite{AttardIJSS2003,AttardIJSS2004,FongTSR1975,HuangJAM2014,HuangJAM2016,Ogden1984,RogovoyEJMAS2001,YaoPTRSA2022}). However, the performance analysis of these models is beyond the scope of this study.
\end{remark}
\vspace*{2mm}

\begin{remark}
\label{rem:3-3}
Our conclusion that in simulations of deformations, solutions for stresses and principal stretches for slightly compressible materials are close to those for incompressible neo-Hookean materials refer only to finite values of the principal stretches ($0.1\lesssim \lambda_k \lesssim 10$) ($k=1,2,3$), which is quite sufficient for applications. However, for extreme values of $\lambda_k$ ($\lambda_k \rightarrow 0$ and $\lambda_k \rightarrow \infty$), such closeness of solutions is not always the case. The behavior of solutions for extreme values of $\lambda_k$ ($k=1,2,3$) is heavily affected by the choice of the function $h(J)$ (see Section \ref{sec:6}).
\end{remark}

\section{Some volumetric functions and their properties}
\label{sec:4}

The objective of this chapter is to summarize properties of volumetric functions available in the literature (Section \ref{sec:4-1}) and to analyze the performance of eight such functions widely presented in the literature (Section \ref{sec:4-2}).

\subsection{Properties of volumetric functions}
\label{sec:4-1}

Following \cite{HartmannIJSS2003,OgdenPRSLA1972b}, we impose the following constraints on the function $h(J)\in C^2$ ($0<J< \infty$); i.e., we require it to have the following properties:
\begin{enumerate}
  \item Properties of the function $h(J)$ at the point $J=1$ \cite{HartmannIJSS2003,OgdenPRSLA1972b}
\begin{equation*}
  h(1)=0,\quad\quad h^{\prime}\equiv \frac{d\,h(J)}{d\,J})\vert {}_{J=1}=0,\quad\quad h^{\prime\prime} \equiv \frac{d^2\,h(J)}{d\,J^2})\vert {}_{J=1}=1.
\end{equation*}
  \item Properties of the function $h^{\prime}$ \cite{OgdenPRSLA1972b}
\begin{equation*}
  h^{\prime}\vert {}_{J<1}<0,\quad\quad h^{\prime}\vert {}_{J>1}>0.
\end{equation*}
  \item Rank-one convexity of the function $\mathbf{F}\rightarrow h(\det \mathbf{F})$, equivalent to the convexity of $J \rightarrow h(J)$ (desired property of the function $h(J)$) \cite{HartmannIJSS2003}
\begin{equation*}
  h^{\prime\prime}(J)>0\quad \forall\ 0<J<\infty .
\end{equation*}
  \item Hill's stability condition applied to a purely volumetric function for compressible materials \cite{OgdenPRSLA1972b}
\begin{equation}\label{82}
   \chi (J) \equiv  h^{\prime}(J) + Jh^{\prime\prime}(J) > 0\quad \forall\ 0<J<\infty.
\end{equation}
  \item Desired properties of the function $h(J)$ in extreme states \cite{HartmannIJSS2003}
\begin{equation*}
   h(J)\rightarrow \infty\ \text{for}\ J\rightarrow 0\ \text{and}\ J\rightarrow \infty.
\end{equation*}
\end{enumerate}

\begin{remark}
\label{rem:4-1a}
When replacing the variable $J$ with $\log J$ for the function $h(J)$, we obtain a modified function \cite{NeffMMS2020}
\begin{equation*}
  \tilde{h}(\log J) := h(J).
\end{equation*}
It is easy to show the following equality
\begin{equation*}
\chi (J) = \frac{\partial^2\,\tilde{h}(\log J)}{\partial\,(\log J)^2}\frac{1}{J},
\end{equation*}
from which it follows that Hill's stability condition in the sense of \eqref{82} means convexity of the function $\tilde{h}(\log J)$ in $\log J$ or convexity of $\tilde{h}(\text{tr}\,\log \mathbf{V})$ in $\log \mathbf{V}$.
\end{remark}

\subsection{Some volumetric functions}
\label{sec:4-2}

In the literature, one can find some families of functions $h(J)$, which are used, in particular, in hyperelasticity models for slightly compressible materials. Consider the one-parameter (with parameter $q\in \mathds{R}$, $q\geq 0$) family of volumetric functions proposed by Hartmann and Neff \cite{HartmannIJSS2003}:
\begin{equation}\label{84}
h^{(q)}(J) \equiv
\begin{cases}
\frac{1}{2q^2} (J^q+J^{-q}-2)=\frac{1}{2}[\frac{1}{q}(J^{q/2}-J^{-q/2})]^2,   &\text{if}\  q > 0, \\
\frac{1}{2}(\ln J)^2\, [=\lim \limits_{q \to 0} \frac{1}{2q^2} (J^q+J^{-q}-2)],  &\text{if} \ q=0.
\end{cases}
\end{equation}
Note the symmetry of functions of this family:
\begin{equation*}
  h^{(q)}(J^{-1})=h^{(q)}(J).
\end{equation*}
The functions $h^{(q)\,\prime}(J)$ for this family have the form
\begin{equation*}
h^{(q)\,\prime}(J)\, (\equiv \frac{d\,h^{(q)}(J)}{d\,J})=
\begin{cases}
\frac{1}{2q} (J^{q-1}-J^{-q-1}),   &\text{if}\  q > 0, \\
\ln J/J,  &\text{if} \ q=0,
\end{cases}
\end{equation*}
the functions $J h^{(q)\,\prime}(J)$ can be written as
\begin{equation}\label{87}
J h^{(q)\,\prime}(J)=
\begin{cases}
\frac{1}{2q} (J^{q}-J^{-q}),   &\text{if}\  q > 0, \\
\ln J,  &\text{if} \ q=0,
\end{cases}
\end{equation}
and the functions $h^{(q)\,\prime\prime}(J)$ have the form
\begin{equation*}
h^{(q)\,\prime\prime}(J)\, (\equiv \frac{d^2\,h^{(q)}(J)}{d\,J^2}) =
\begin{cases}
\frac{1}{2q} [(q-1)J^{q-2}+(q+1)J^{-q-2}],   &\text{if}\  q > 0, \\
J^{-2}(1-\ln J),  &\text{if} \ q=0.
\end{cases}
\end{equation*}

Other widely used volumetric functions are those from the one-parameter family (with parameter $\beta \in \mathds{R}\smallsetminus{0}$) proposed by Ogden \cite{OgdenPRSLA1972b}
\begin{equation}\label{89}
  h^{(\beta)}(J) \equiv \beta^{-2}(\beta\ln J+J^{-\beta}-1).
\end{equation}
The functions $h^{(\beta)\,\prime}(J)$ for this family have the form
\begin{equation*}
  h^{(\beta)\,\prime}(J)\, (\equiv \frac{d\,h^{(\beta)}(J)}{d\,J})= \beta^{-1}(J^{-1}-J^{-\beta-1}),
\end{equation*}
the function $J h^{(\beta)\,\prime}(J)$ can be written as
\begin{equation*}
  J h^{(\beta)\,\prime}(J) = \beta^{-1}(1-J^{-\beta}),
\end{equation*}
and the functions $h^{(\beta)\,\prime\prime}(J)$ have the form
\begin{equation*}
  h^{(\beta)\,\prime\prime}(J)\,(\equiv \frac{d^2\,h^{(\beta)}(J)}{d\,J^2}) =  \beta^{-1}[(\beta+1)J^{-\beta -2}-J^{-2}].
\end{equation*}
Next, the functions $h^{(q)}(J)$ corresponding to integer values $q=0,\,1,\,2,\,5$ are identified by ID numbers \#1--4, and the  functions $h^{(\beta)}(J)$ corresponding to integer values $\beta=-2,\,-1$ by ID numbers \#5 and \#6 (see Table \ref{t1}).
\begin{table}
\caption{Some volumetric functions $h(J)$ and their properties}
\label{t1}
\begin{tabular}{llllllll}
\hline\noalign{\smallskip}
     &                            & Expression                 &  \multicolumn{5}{c}{Satisfaction of constraints ${}^{\flat}$} \\
  ID &  Equation                  & for $h(J)$                 & (1) & (2) & (3)${}^{\S}$ & (4) & (5) \\
\noalign{\smallskip}\hline\noalign{\smallskip}
   1 & Eq. \eqref{84}, $q=0$      & $(\ln J)^2/2$               &  +  &  +  &  -  &  +  &  +  \\
   2 & Eq. \eqref{84}, $q=1$      & $(J+J^{-1}-2)/2$           &  +  &  +  &  +  &  +  &  +  \\
   3 & Eq. \eqref{84}, $q=2$      & $(J^2+J^{-2}-2)/8$         &  +  &  +  &  +  &  +  &  +  \\
   4 & Eq. \eqref{84}, $q=5$      & $(J^5+J^{-5}-2)/50$        &  +  &  +  &  +  &  +  &  +  \\
   5 & Eq. \eqref{89}, $\beta=-2$ & $(J^2-2\ln J-1)/4$         &  +  &  +  &  +  &  +  &  +  \\
   6 & Eq. \eqref{89}, $\beta=-1$ & $J-\ln J-1$                &  +  &  +  &  +  &  +  &  +  \\
   7 & Eq. $\eqref{93}_1$         & $(J-1)^2/2$                &  +  &  +  &  +  &  -  &  -  \\
   8 & Eq. $\eqref{94}_1$         & $(\mathrm{e}^{\ln^{2}\!\!J} - 1)/2$ &  +  &  +  &  +  &  +  &  +  \\
\noalign{\smallskip}\hline
\end{tabular}
\footnotesize
\begin{itemize}
\item[${}^{\flat}$]$+/-$, the corresponding constraint is satisfied/not satisfied;
\item[${}^{\S}$]The convexity property (3) does not hold for volumetric function \#1 for $J\geq \mathrm{e}\ (\approx\ 2.71)$.
\end{itemize}
\normalsize
\end{table}
Volumetric functions \#1 and \#3--6 with literature references are shown in Table 4 in \cite{HartmannIJSS2003}.

Along with the above volumetric functions, we consider the widely used function (see, e.g., \cite{HartmannIJSS2003}, Table 4)
\begin{equation}\label{93}
  h(J)\equiv \frac{1}{2}(J-1)^2\ \Rightarrow\ h^{\prime}(J)=J-1,\quad J h^{\prime}(J)=J(J-1),\quad h^{\prime\prime}(J)=1.
\end{equation}
This function is denoted by ID number \#7 (see Table \ref{t1}).

In addition, we consider the volumetric function
\begin{align}\label{94}
  h(J) &\equiv \frac{1}{2}(\mathrm{e}^{\ln^{2}\!\!J} - 1) \Rightarrow\ h^{\prime}(J)=\frac{1}{J}\mathrm{e}^{\ln^{2}\!\!J}\ln J,\quad J h^{\prime}(J)=\mathrm{e}^{\ln^{2}\!\!J}\ln J, \\
  h^{\prime\prime}(J) &= \frac{1}{J^{2}}[\mathrm{e}^{\ln^{2}\!\!J}(1 - \ln J + 2\ln^{2}J)]. \notag
\end{align}
This new function is denoted by ID number \#8 (see Table \ref{t1}).

Plots of the functions $h(J)$, $h^{\prime}(J)$, $h^{\prime\prime}(J)$, and $\chi(J)=h^{\prime}(J)+J h^{\prime\prime}(J)$ are presented in Figure \ref{f1}.
\begin{figure}
\begin{center}
\includegraphics{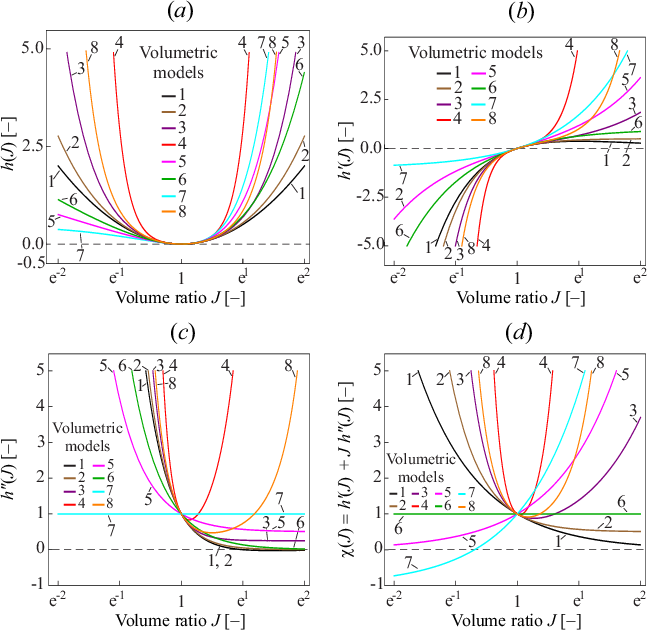}
\end{center}
\caption{Plots of the functions $h(J)$ (\emph{a}), $h^{\prime}(J)$ (\emph{b}), $h^{\prime\prime}(J)$ (\emph{c}), and $\chi(J)=h^{\prime}(J)+Jh^{\prime\prime}(J)$ (\emph{d}).}
\label{f1}
\end{figure}
These plots provide an answer to the question of whether the volumetric functions considered here satisfy constraints (1)--(5); these answers are given in Table \ref{t1}. Analysis of the performance of the volumetric functions presented in Table \ref{t1} shows that not all of these functions satisfy constraints (1)--(5). However, for slightly compressible materials ($J\approx 1$), the plots in Fig. \ref{f1} demonstrate that the approximate equalities
\begin{equation*}
  h(J)\approx\frac{1}{2}(\ln J)^2\approx \frac{1}{2}(J-1)^2,\quad\quad h^{\prime}(J)\approx Jh^{\prime}(J)\approx \ln J \approx J-1
\end{equation*}
are valid in the vicinity of the value $J=1$.

It is interesting to compare the dependencies of the mean stress $\sigma_m=1/3(\sigma_1+\sigma_2+\sigma_3)$ (or the pressure $p\equiv -\sigma_m$) on the stretch ratio $k\,(=\lambda_1=\lambda_2=\lambda_3)$ for dilatational deformation of the form
\begin{equation*}
  \mathbf{V}=k\,\mathbf{I}\quad (0<k<\infty)
\end{equation*}
obtained for the two models considered here (mixed and vol-iso) using the volumetric functions presented in Table \ref{t1} and to compare these dependencies with the experimental data obtained by Bridgman for the volume change of sodium at high pressures \cite{BridgmanPAAAS1923,BridgmanPNAS1935,BridgmanPAAAS1938,BridgmanPAAAS1945}. Setting $\mu=2.53$ GPa and $\nu=0.34$ \cite{KellermannZAMM2016}, we obtain the following values for the quantities $\lambda$ and $K$:
\begin{equation*}
  \lambda=\frac{2\mu\, \nu}{1-2\nu}=5.37\,\text{GPa},\quad\quad K=\lambda+\frac{2}{3}\mu=7.06\,\text{GPa}.
\end{equation*}
Using the expression $J=k^3$, from \eqref{65} we obtain the dependence $\sigma_m(k)$ for the mixed models
\begin{equation}\label{98}
  \sigma_m(k)=\frac{\mu}{k^3}(k^2-1)+\lambda\, h^{\prime}(k^3),
\end{equation}
and from \eqref{68}, the dependence $\sigma_m(k)$ for the vol-iso models
\begin{equation}\label{99}
  \sigma_m(k)=K h^{\prime}(k^3).
\end{equation}
Hereinafter, the ID numbers of the mixed and vol-iso models are identified with the ID numbers of volumetric functions (see Tables \ref{t-Intro-1}, \ref{t-Intro-2}, and \ref{t1}).\footnote{Note that our mixed model \#1 is the \emph{Simo--Pister}hyperelastic compressible material model (cf., \cite{SimoCMAME1984}) and our mixed model \#5 is the \textit{Ciarlet--Geymonat} hyperelastic compressible material model (cf., \cite{CiarletCRASP1982}).} Plots of the functions $\sigma_m(k)$ in \eqref{98} and \eqref{99} for the mixed and vol-iso models are presented in Fig. \ref{f2},\emph{a}, and plots of the functions $p(k)=-\sigma_m(k)$ for these models are presented in Fig. \ref{f2},\emph{b}.
\begin{figure}
\begin{center}
\includegraphics{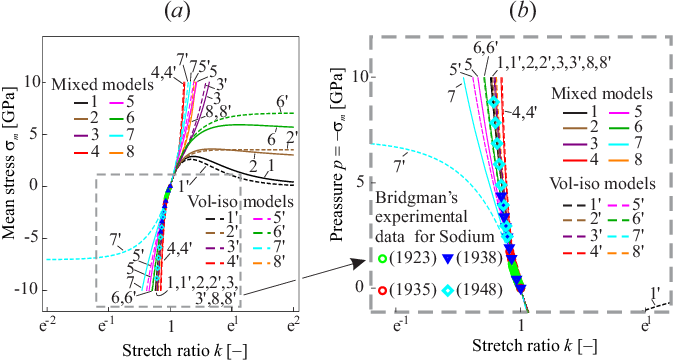}
\end{center}
\caption{Plots of the functions $\sigma_m(k)$ in \eqref{98} and \eqref{99} (\emph{a}) and plots of the functions $p(k)$ (\emph{b}); markers correspond to the experimental values of Bridgman for Sodium.}
\label{f2}
\end{figure}
Markers in the figures represent the experimental dependencies $\sigma(k)$ and $p(k)$ obtained by Bridgman \cite{BridgmanPAAAS1923,BridgmanPNAS1935,BridgmanPAAAS1938,BridgmanPAAAS1945}. It can be seen that in the compression region ($k<1$), the plots of $\sigma(k)$ (or $p(k)$) for models \#1--6 and \#8 are in qualitative agreement with the experimental data of Bridgman (and for models \#1-4,8 they are in quantitative agreement, which is consistent with the conclusions in \cite{HartmannIJSS2003}). However, the vol-iso model \#7 yields a physically unreasonable response in the form of a finite limiting value for $\sigma_m$ for $k\rightarrow 0$, whereas the mixed model \#7 does not lead to a physically unreasonable response. In the extension region ($k>1$), experimental data on the dependence $\sigma_m(k)$ are difficult to obtain. From the point of view of the idealized concept of deformation, the reasonable response manifested in the tendency of $\sigma_m(k)$ to infinity for $k\rightarrow \infty$ is predicted by models \#3--8. However, it is models \#1 and \#2 that correctly describe the response of real materials manifested as loosening under triaxial extension. Obviously, volumetric function \#1 provides the best approximation of the dependencies $\sigma_m(k)$ ($k>1$) for real materials.

\section{Constitutive inequalities for neo-Hookean materials}
\label{sec:5}

As noted in Section \ref{sec:1}, in the literature there are some constitutive inequalities that allow the selection of material models with desired properties and/or the establishment of constraints on the parameters included in the constitutive relations for the material models considered. In this book, we test neo-Hookean material models for satisfaction of the constitutive inequalities due to the \emph{Hill postulate} and the \emph{corotational stability postulate} (CSP), which extend the constitutive inequality due to \emph{Drucker's postulate} from infinitesimal to finite strains. Formulations of these inequalities are given in Section \ref{sec:5-1}, and testing material models for satisfaction of the Hill and CSP postulates is performed in Sections \ref{sec:5-2} and \ref{sec:5-3}, respectively.

\subsection{Hill and CSP constitutive inequalities}
\label{sec:5-1}

\begin{definition}
\label{def:5-1}
(\emph{Drucker's postulate}) (see, e.g., \cite{dAgostinoJElast2025,NeffIJNLM2025,NeffJMPS2025}) A material model satisfies the \emph{Drucker postulate} (\emph{Drucker's material stability condition}) if under infinitesimal strain conditions for the stress rates $\dot{\boldsymbol{\sigma}}$ satisfying the constitutive relations of this model, the following inequality holds at each time instant:
\begin{equation}\label{100}
  \langle\frac{\text{D}}{\text{D}t}[\sigma],\frac{\text{D}}{\text{D}t}[\varepsilon]\rangle\,=\,\dot{\boldsymbol{\sigma}}:\dot{\boldsymbol{\varepsilon}}>0 \quad\quad \forall\ \dot{\boldsymbol{\varepsilon}}\neq \mathbf{0},
\end{equation}
where $\frac{\text{D}}{\text{D}t}$ denotes the material time derivative.
\end{definition}
\vspace*{2mm}

\begin{definition}
\label{def:5-2}
(\emph{Hill's postulate}) \cite{HillJMPS1968,HillPRSLA1970,Hill1979} A material model satisfies the \emph{Hill postulate} if for the objective stress rates $\frac{\text{D}^{\text{ZJ}}}{\text{D}t}[\boldsymbol{\tau}]$ or $\frac{\text{D}^{\text{BH}}}{\text{D}t}[\boldsymbol{\sigma}]$ satisfying the constitutive relations of this model, the following inequality holds at each time instant:
\begin{equation*}
  \langle\frac{\text{D}^{\text{ZJ}}}{\text{D}t}[\tau],d\rangle\,=\,\frac{\text{D}^{\text{ZJ}}}{\text{D}t}[\boldsymbol{\tau}]:\mathbf{d}>0\quad \Leftrightarrow \quad \langle\frac{\text{D}^{\text{BH}}}{\text{D}t}[\sigma],d\rangle\,=\, \frac{\text{D}^{\text{BH}}}{\text{D}t}[\boldsymbol{\sigma}]:\mathbf{d}>0 \quad \forall\ \mathbf{d}\neq \mathbf{0}.
\end{equation*}
\end{definition}
Satisfaction of any material model to the Hill postulate is equivalent to the assertion that the \emph{Kirchhoff} stress tensor $\boldsymbol{\tau}$ is a monotone function in the left Hencky (logarithmic) strain tensor $\log \mathbf{V}\equiv \sum^m_{i=1}\log \lambda_i \mathbf{V}_i$ in the sense that \cite{NeffJMPS2025} (see also \cite{NeffMMS2020})
\begin{align*}
    \langle\tau(\log\,V_1) & -\tau(\log\,V_2),\log\,V_1 - \log\,V_2 \rangle \\
  {} &=(\boldsymbol{\tau}(\log\,\mathbf{V}_1) - \boldsymbol{\tau}(\log\,\mathbf{V}_2)):(\log\,\mathbf{V}_1-\log\,\mathbf{V}_2)>0\quad \forall\, \mathbf{V}_1\neq\mathbf{V}_2.
\end{align*}
In addition, for a hyperelastic solid, the Hill inequality can be equivalently expressed as the convexity of the elastic energy in terms of $\log \mathbf{V}$ since we have the Richter formula \cite{NeffMMS2020}
\begin{equation*}
  \boldsymbol{\tau}= \frac{\partial\,\widehat{W}(\log \mathbf{V})}{\partial\,\log \mathbf{V}},\quad\quad \widehat{W}(\log \mathbf{V}) := W(\mathbf{V}).
\end{equation*}

\begin{remark}
\label{rem:5-1}
The equivalence of the formulations of Hill's inequality for $\frac{\text{D}^{\text{ZJ}}}{\text{D}t}[\boldsymbol{\tau}]$ and $\frac{\text{D}^{\text{BH}}}{\text{D}t}[\boldsymbol{\sigma}]$ is based on equality \eqref{36} ($J>0$).
\end{remark}
\vspace*{3mm}

\begin{definition}
\label{def:5-3}
(\emph{Corotational stability postulate}) \cite{dAgostinoJElast2025,Ghiba2025,NeffIJNLM2025,NeffJMPS2025} A material model satisfies the \emph{corotational stability postulate} (CSP) if for the corotational stress rates $\frac{\text{D}^{\text{o}}}{\text{D}t}[\boldsymbol{\sigma}]$ satisfying the constitutive relations of this model, the following inequality holds at each time instant:
\begin{equation}\label{102}
  \langle\frac{\text{D}^{\text{o}}}{\text{D}t}[\sigma],d\rangle\,=\,\frac{\text{D}^{\text{o}}}{\text{D}t}[\boldsymbol{\sigma}]:\mathbf{d}>0\quad \forall\ \mathbf{d}\neq \mathbf{0},
\end{equation}
where $\frac{\text{D}^{\text{o}}}{\text{D}t}[\boldsymbol{\sigma}]$ is \emph{any} corotational stress rate \cite{NeffAM2025} based on a spin tensor from the family of \emph{continuous material spin tensors} \cite{XiaoIJSS1998,XiaoJElast1998,XiaoArchMech1998} (see also \cite{KorobeynikovAM2011}). In particular, we can use $\frac{\text{D}^{\text{ZJ}}}{\text{D}t}[\boldsymbol{\sigma}]$ as this tensor rate.
\end{definition}
Satisfaction of any material model to the CSP postulate is equivalent to the assertion that the \emph{Cauchy} stress tensor $\boldsymbol{\sigma}$ is a monotonic function in the left Hencky (logarithmic) strain tensor $\log \mathbf{V}$ (cf., \cite{NeffJMPS2025}) meaning that
\begin{align*}
   \langle\sigma(\log\,V_1) &- \sigma(\log\,V_2),\log\,V_1 - \log\,V_2 \rangle \\
  {} &=(\boldsymbol{\sigma}(\log\,\mathbf{V}_1) - \boldsymbol{\sigma}(\log\,\mathbf{V}_2)):(\log\,\mathbf{V}_1-\log\,\mathbf{V}_2)>0\quad \forall\, \mathbf{V}_1\neq\mathbf{V}_2.
\end{align*}
The latter is the \textbf{T}rue \textbf{S}tress \textbf{T}rue \textbf{S}train Hilbert-\textbf{M}onotonicity condition (postulate) $\text{TSTS-M}^+$ (cf., \cite{NeffJMPS2025}).
\vspace*{2mm}

\begin{remark}
\label{rem:5-1-a}
Both compressible neo-Hookean formulations (mixed and vol-iso) are polyconvex, therefore rank-one convex provided that $\mu,\,K,\,\lambda\,>0$ and $h$ is convex in $J$ (cf., \cite{HartmannIJSS2003}).
\end{remark}
\vspace*{2mm}

\begin{remark}
\label{rem:5-2}
The nonequivalence of the constitutive inequalities due to Definitions \ref{def:5-2} and \ref{def:5-3} follows from the equality
\begin{equation}\label{103}
  \langle\frac{\text{D}^{\text{BH}}}{\text{D}t}[\sigma],d\rangle\,=\,\frac{\text{D}^{\text{BH}}}{\text{D}t}[\boldsymbol{\sigma}]:\mathbf{d} = \frac{\text{D}^{\text{ZJ}}}{\text{D}t}[\boldsymbol{\sigma}]:\mathbf{d} + (\boldsymbol{\sigma}:\mathbf{d})\,\text{tr}\,\mathbf{d},
\end{equation}
which is a consequence of equality $\eqref{37}_1$. However, for incompressible materials, $\text{tr}\,\mathbf{d}=0$; then it follows from \eqref{103} that Definitions \ref{def:5-2} and \ref{def:5-3} are equivalent for these materials.
\end{remark}
\vspace*{2mm}

\begin{remark}
\label{rem:5-3}
For infinitesimal strains, both postulates (the Hill and CSP) reduce to the Drucker postulate.
\end{remark}
\vspace*{2mm}

Our present investigation is partly motivated by the fact that polyconvexity or the Legendre--Hadamard ellipticity condition above is clearly not sufficient to guarantee a physically admissible response (see our discussion of the appearance of non-monotonic Cauchy stresses for polyconvex compressible neo-Hookean type models in Section \ref{sec:6}).

\subsection{Testing linear elastic isotropic material models using Drucker's postulate --- convexity of the energy}
\label{sec:5-1a}

We obtain from $\eqref{41}_2$ taking into account equality \eqref{40} the following expression for the contraction for the incompressible material
\begin{equation*}
  \dot{\boldsymbol{\sigma}}:\dot{\boldsymbol{\varepsilon}} = 2\mu \,\dot{\boldsymbol{\varepsilon}}:\dot{\boldsymbol{\varepsilon}}= 2\mu\,\|\dot{\boldsymbol{\varepsilon}}\|^2 = 2\mu\,\|\text{dev}\,\dot{\boldsymbol{\varepsilon}}\|^2,
\end{equation*}
from which follows a necessary and sufficient condition for the fulfillment of the inequality of Drucker's postulate \eqref{100}: $\mu>0$.

Similarly, we obtain from $\eqref{45}_2$ or $\eqref{48}_2$ the contraction expression for a compressible material
\begin{equation}\label{103a}
  \dot{\boldsymbol{\sigma}}:\dot{\boldsymbol{\varepsilon}} = 2\mu\,\|\text{dev}\,\dot{\boldsymbol{\varepsilon}}\|^2 + K\, (\text{tr}\,\dot{\boldsymbol{\varepsilon}})^2.
\end{equation}
The first term on the r.h.s. of \eqref{103a} corresponds to the isochoric strain rate, and the second term corresponds to the volumetric strain rate. Since they cannot be equal to zero simultaneously for an arbitrary non-zero strain rate, the necessary and sufficient conditions for the fulfillment of the Drucker postulate inequality \eqref{100} are the constraints $\mu,\, K>0$ on the parameters of the linear elastic material.

Satisfaction of any linear elastic isotropic material model to the Drucker postulate is equivalent to the assertion that the \emph{Cauchy} stress tensor $\boldsymbol{\sigma}$ is a monotonic function in the infinitesimal strain tensor $\boldsymbol{\varepsilon}$ meaning that
\begin{align*}
 &\langle\frac{\text{D}}{\text{D}t}[\sigma],\frac{\text{D}}{\text{D}t}[\varepsilon]\rangle\,=\,\dot{\boldsymbol{\sigma}}:\dot{\boldsymbol{\varepsilon}}>0 \quad\quad \forall\ \dot{\boldsymbol{\varepsilon}}\neq \mathbf{0}\\
 \Leftrightarrow \quad &  \langle\sigma(\varepsilon_1) - \sigma(\varepsilon_2),\varepsilon_1 - \varepsilon_2 \rangle\,=\, (\boldsymbol{\sigma}(\boldsymbol{\varepsilon}_1) - \boldsymbol{\sigma}(\boldsymbol{\varepsilon}_2)):(\boldsymbol{\varepsilon}_1 - \boldsymbol{\varepsilon}_2)\quad \forall\, \boldsymbol{\varepsilon}_1 \neq \boldsymbol{\varepsilon}_2\\
 \Leftrightarrow \quad & \boldsymbol{\varepsilon} \rightarrow W_{\text{lin}}(\boldsymbol{\varepsilon})\ \text{is strictly convex}\\
 \Leftrightarrow \quad & \mu,\, K>0.
\end{align*}

\subsection{Testing neo-Hookean models using the Hill postulate}
\label{sec:5-2}

We test the incompressible neo-Hookean model and the compressible mixed and vol-iso models for satisfaction of the Hill and CSP postulates in Subsections \ref{sec:5-2-1}, \ref{sec:5-2-2}, and \ref{sec:5-2-3}, respectively.

\subsubsection{Testing the incompressible neo-Hookean material model}
\label{sec:5-2-1}

From \eqref{55} we obtain the non-objective rate form of constitutive relations for the incompressible neo-Hookean model:
\begin{equation}\label{104}
  \dot{\boldsymbol{\sigma}} = \mu\, \dot{\mathbf{c}} - \dot{p}\,\mathbf{I}.
\end{equation}
The equality (see, e.g., \cite{KorobeynikovJElast2008}, p. 126)
\begin{equation*}
  \frac{\text{D}^{\overline{\text{Old}}}}{\text{D}t}[\mathbf{c}] = \mathbf{0},\quad\quad \frac{\text{D}^{\overline{\text{Old}}}}{\text{D}t}[\mathbf{c}]\equiv \dot{\mathbf{c}} - \boldsymbol{\ell} \cdot \mathbf{c} - \mathbf{c}\cdot \boldsymbol{\ell}^T,
\end{equation*}
and Eq. $\eqref{28}_1$ lead to the following expression for $\dot{\mathbf{c}}$:
\begin{equation}\label{106}
  \dot{\mathbf{c}}=(\mathbf{d} + \mathbf{w})\cdot \mathbf{c} + \mathbf{c}\cdot (\mathbf{d} - \mathbf{w}).
\end{equation}
The objective rate form of constitutive relations for this model is obtained using $\eqref{35}_1$, \eqref{55}, \eqref{104}, and \eqref{106}:
\begin{equation}\label{107}
  \frac{\text{D}^{\text{ZJ}}}{\text{D}t}[\boldsymbol{\sigma}]= -\dot{p}\,\mathbf{I} + \mu\,(\mathbf{d}\cdot \mathbf{c} + \mathbf{c}\cdot \mathbf{d}).
\end{equation}

Representing the tensor $\mathbf{c}$ in the form \eqref{19} and the tensor $\mathbf{d}$ in the form \eqref{29} and using the eigenprojection property $\eqref{6}_1$, we obtain
\begin{equation}\label{108}
\mathbf{d}\cdot \mathbf{c} + \mathbf{c}\cdot \mathbf{d}=2\sum_{i=1}^{m}\dot{\lambda}_i \lambda_i\mathbf{V}_i +
\sum_{i\neq j=1}^{m}(\lambda_i^2+\lambda_j^2) \mathbf{V}_i\cdot \mathbf{d}\cdot\mathbf{V}_j.
\end{equation}
The contraction is given by
\begin{equation}\label{109}
  A(\mathbf{d})\equiv (\mathbf{d}\cdot \mathbf{c} + \mathbf{c}\cdot \mathbf{d}):\mathbf{d}.
\end{equation}
Since the first summand on the r.h.s. of \eqref{108} and the component $\hat{\mathbf{d}}$ of the tensor $\mathbf{d}$ in \eqref{29} are coaxial with the tensor $\mathbf{V}$ and since the second summand on the r.h.s. of \eqref{108} and the component $\tilde{\mathbf{d}}$ of the tensor $\mathbf{d}$ in \eqref{29} are orthogonal to the tensor $\mathbf{V}$, using the eigenprojection property $\eqref{6}_3$ and \eqref{109} we obtain
\begin{equation*}
  A(\mathbf{d})=2\sum_{i=1}^{m}(\dot{\lambda}_i)^2\,m_i + [\sum_{i\neq j=1}^{m}(\lambda_i^2+\lambda_j^2) \mathbf{V}_i\cdot \mathbf{d}\cdot\mathbf{V}_j]:\tilde{\mathbf{d}}.
\end{equation*}

The quadratic form $A(\mathbf{d})$ can be represented as a sum of two quadratic forms
\begin{equation}\label{111}
  A(\mathbf{d})=P(\hat{\mathbf{d}}) + R(\tilde{\mathbf{d}}),
\end{equation}
where
\begin{equation}\label{112}
  P(\hat{\mathbf{d}})\equiv 2\sum_{i=1}^{m}(\dot{\lambda}_i)^2\,m_i,\quad\quad R(\tilde{\mathbf{d}})\equiv \tilde{\mathbf{d}}:\mathbb{R}(\mathbf{V}):\tilde{\mathbf{d}},\quad\quad
  \mathbb{R}(\mathbf{V})\equiv \sum_{i\neq j=1}^{m}(\lambda_i^2+\lambda_j^2)\mathbf{V}_i\!\overset{\text{sym}}{\otimes}\!\mathbf{V}_j.
\end{equation}

According to the statements of Proposition \ref{Pr-1}, the tensor $\mathbb{R}(\mathbf{V})\in \mathcal{T}^4_\text{{Ssym}}$, and the quadratic form $R(\tilde{\mathbf{d}})$ is a positive definite form of $\tilde{\mathbf{d}}$. Since $P(\hat{\mathbf{d}})$ is also a positive definite form of $\hat{\mathbf{d}}$, it follows that $A(\mathbf{d})$ is a positive definite quadratic form of $\mathbf{d}$.

Since
\begin{equation}\label{113}
  \mathbf{I}:\mathbf{d}= \text{tr}\,\mathbf{d}
\end{equation}
and since equality \eqref{30} is valid, the equality $\text{tr}\,\mathbf{d}=0$ is valid for incompressible materials ($\dot{J}=0$). Then from \eqref{107} we obtain
\begin{equation*}
  \frac{\text{D}^{\text{ZJ}}}{\text{D}t}[\boldsymbol{\sigma}]:\mathbf{d} = \mu(\mathbf{d}\cdot \mathbf{c} + \mathbf{c}\cdot \mathbf{d}):\mathbf{d} = \mu  A(\mathbf{d}).
\end{equation*}
Since $A(\mathbf{d})$ is a positive definite quadratic form of $\mathbf{d}$ and $\mu>0$, we conclude that the incompressible neo-Hookean material model satisfies the Hill postulate.\\

\begin{remark}
\label{rem:5-4}
The above conclusion is consistent with the conclusions that the Ogden incompressible material model satisfies the Hill postulate (cf., \cite{OgdenPRSLA1972a}) and that the incompressible neo-Hookean material model is Ogden's model with one power term to the power $n=2$.
\end{remark}
\vspace*{2mm}

\begin{remark}
\label{rem:5-4-a}
Note that the positive definiteness of the quadratic form $A(\mathbf{d})$ can be proved more simply in the basis-free form for the tensors $\mathbf{c}$ and $\mathbf{d}$, without using the eigenprojections of the tensor $\mathbf{V}$ or, equivalently, the tensor $\mathbf{c}$. We obtain an alternative representation for the quadratic form $A(\mathbf{d})$, based on expressions \eqref{109}
\begin{align*}
  A(\mathbf{d}) &= (\mathbf{c}\cdot \mathbf{d}):\mathbf{d} +  (\mathbf{d}\cdot \mathbf{c}):\mathbf{d} = (\mathbf{c}\cdot \mathbf{d}):\mathbf{d} + (\mathbf{c}^T\cdot \mathbf{d}^T):\mathbf{d} = 2\, (\mathbf{c}\cdot \mathbf{d}):\mathbf{d}  \\
  &  = 2\,(\mathbf{F}\cdot\mathbf{F}^T\cdot \mathbf{d}):\mathbf{d}=2\,(\mathbf{F}^T\cdot \mathbf{d}):(\mathbf{F}^T\cdot \mathbf{d}) = 2\, \|\mathbf{F}^T\cdot \mathbf{d}\|^2, \notag
\end{align*}
from which the positive definiteness of the quadratic form $A(\mathbf{d})$ follows. Nevertheless, the explicit representation of the quadratic form \eqref{111} turns out to be useful for testing the compressible vol-iso material models (see Section \ref{sec:5-2-3}).
\end{remark}

Alternatively, for the incompressible case, we can write
\begin{equation*}
  W_{\text{neo-Hooke}}(\mathbf{F}) = \frac{\mu}{2}(\|\mathbf{F}\|^2 - 3) = \frac{\mu}{2}(\|\mathbf{V}\|^2 - 3) = \frac{\mu}{2}(\|\exp(\log\mathbf{V})\|^2 - 3) =: \widehat{W}(\log\mathbf{V}),
\end{equation*}
and it is clear that $\log\mathbf{V} \rightarrow \widehat{W}(\log\mathbf{V})$ is convex in $\log\mathbf{V}$, hence Hill's inequality is easily seen to be satisfied.

\subsubsection{Testing the compressible mixed material models}
\label{sec:5-2-2}

By analogy with the derivation of Eq. \eqref{107}, expression \eqref{63} for the Kirchhoff stress tensor $\boldsymbol{\tau}$ leads to the following objective rate form of constitutive relations for any mixed material model (here we also use equality \eqref{30}):
\begin{equation}\label{115}
  \frac{\text{D}^{\text{ZJ}}}{\text{D}t}[\boldsymbol{\tau}]= \mu\,(\mathbf{d}\cdot \mathbf{c} + \mathbf{c}\cdot \mathbf{d}) + \lambda\, \chi(J)J\,\text{tr}\,\mathbf{d}\, \mathbf{I},
\end{equation}
the quantity $\chi(J)$ is defined in \eqref{82}. Next, using notation \eqref{109} and equality \eqref{113}, we get
\begin{equation}\label{116}
  \frac{\text{D}^{\text{ZJ}}}{\text{D}t}[\boldsymbol{\tau}]:\mathbf{d} = \mu  A(\mathbf{d}) + \lambda\, \chi(J)J(\text{tr}\,\mathbf{d})^2.
\end{equation}
Note that for $\chi(J)>0$, the second summand on the r.h.s. of \eqref{116} is a non-positive definite quadratic form of $\mathbf{d}$. We allow, first, that for this material model, the Lam\'{e} parameter $\lambda=0$ (and hence $\nu=0$) (see \eqref{75}), and, second, that the scalar $\text{tr}\,\mathbf{d}$ can take zero value for $\mathbf{d}\neq 0$, whence it follows that this quadratic form is non-positive definite. However, since $A(\mathbf{d})$ is a positive definite form (see Section \ref{sec:5-2-1}) and since we assume that $\mu>0$, the r.h.s. of \eqref{116} is a positive definite quadratic form of $\mathbf{d}$ for $\chi(J)>0$. Thus, inequality $\chi(J)>0$ is a sufficient condition for any mixed compressible neo-Hookean model to satisfy the Hill postulate.\\

\begin{remark}
\label{rem:5-5}
The above statement agrees with the conclusion that the Ogden compressible material model satisfies the Hill postulate for $\chi(J)>0$ (cf., \cite{OgdenPRSLA1972b}).
\end{remark}

\subsubsection{Testing the compressible vol-iso material models}
\label{sec:5-2-3}

Expression \eqref{67} leads to the following non-objective rate form of constitutive relations for the Kirchhoff stress tensor $\boldsymbol{\tau}$:
\begin{equation}\label{117}
  \dot{\boldsymbol{\tau}}= \mu\,\dot{\overline{\text{dev}\,\bar{\mathbf{c}}}} + K \chi(J)J\text{tr}\,\mathbf{d}\, \mathbf{I}.
\end{equation}
Using \eqref{24}, from $\eqref{26}_2$ we obtain the equality
\begin{equation}\label{118}
  \dot{\overline{\text{dev}\,\bar{\mathbf{c}}}} = -\frac{2}{3}J^{-5/3}\dot{J}(\mathbf{c}-\frac{1}{3}\text{tr}\,\mathbf{c}\,\mathbf{I}) + J^{-2/3}(\dot{\mathbf{c}}-\frac{1}{3}\text{tr}\,\dot{\mathbf{c}}\,\mathbf{I}).
\end{equation}
In view of equalities \eqref{30} and \eqref{106}, equality \eqref{118} can be transformed to
\begin{equation}\label{119}
  \dot{\overline{\text{dev}\,\bar{\mathbf{c}}}} = J^{-2/3}\left[-\frac{2}{3}(\mathbf{c}-\frac{1}{3}\text{tr}\,\mathbf{c}\,\mathbf{I})\,\text{tr}\,\mathbf{d} + (\mathbf{d} + \mathbf{w})\cdot \mathbf{c} + \mathbf{c}\cdot (\mathbf{d} - \mathbf{w})-\frac{1}{3}\text{tr}\,\dot{\mathbf{c}}\,\mathbf{I}\right].
\end{equation}
Representation \eqref{106} of the tensor $\dot{\mathbf{c}}$ leads to the equality
\begin{equation}\label{120}
  \text{tr}\,\dot{\mathbf{c}}= \text{tr}(\mathbf{d}\cdot \mathbf{c} + \mathbf{w}\cdot \mathbf{c} + \mathbf{c}\cdot \mathbf{d} - \mathbf{c}\cdot \mathbf{w}) = 2\mathbf{c}:\mathbf{d},
\end{equation}
which was derived using the equalities
\begin{equation*}
  \text{tr}(\mathbf{d}\cdot \mathbf{c})= \text{tr}(\mathbf{c}\cdot \mathbf{d})=\mathbf{c}:\mathbf{d},\quad\quad \text{tr}(\mathbf{w}\cdot \mathbf{c})=\mathbf{w}:\mathbf{c}=0,\quad\quad \text{tr}(\mathbf{c}\cdot \mathbf{w})=\mathbf{c}:\mathbf{w}=0.
\end{equation*}
Using \eqref{30}, \eqref{119}, and \eqref{120}, constitutive relations \eqref{67}, and the definition of the Zaremba--Jaumann stress rate $\eqref{34}_1$, we transform \eqref{117} to the objective rate form of constitutive relations for any vol-iso neo-Hookean material model:
\begin{equation}\label{122}
  \frac{\text{D}^{\text{ZJ}}}{\text{D}t}[\boldsymbol{\tau}]= K \chi(J)J\text{tr}\,\mathbf{d}\, \mathbf{I} + \mu J^{-2/3}\left[-\frac{2}{3}(\mathbf{c} - \frac{1}{3}\text{tr}\,\mathbf{c}\,\mathbf{I})\text{tr}\,\mathbf{d} + \mathbf{d}\cdot \mathbf{c} + \mathbf{c}\cdot \mathbf{d} - \frac{2}{3}(\mathbf{c}:\mathbf{d})\mathbf{I}\right].
\end{equation}
Performing the contraction of the left- and right-hand sides of Eq. \eqref{122} with the tensor $\mathbf{d}$ and using equality \eqref{113}, we arrive at the equality
\begin{equation}\label{123}
  \frac{\text{D}^{\text{ZJ}}}{\text{D}t}[\boldsymbol{\tau}]:\mathbf{d} = K \chi(J)J(\text{tr}\,\mathbf{d})^2 + \mu J^{-2/3}[-\frac{4}{3}(\mathbf{c}:\mathbf{d})\text{tr}\,\mathbf{d} + \frac{2}{9}\text{tr}\,\mathbf{c}(\text{tr}\,\mathbf{d})^2 + (\mathbf{d}\cdot \mathbf{c} + \mathbf{c}\cdot \mathbf{d}):\mathbf{d}].
\end{equation}
Since the unit tensor $\mathbf{I}$ is coaxial with the tensor $\mathbf{V}$, using the property $\eqref{6}_3$ of eigenprojections and equalities \eqref{19} and \eqref{29}, we obtain
\begin{equation}\label{124}
  \text{tr}\,\mathbf{d}=\mathbf{d}:\mathbf{I}=\hat{\mathbf{d}}:\mathbf{I}=\sum_{i=1}^{m}\frac{\dot{\lambda}_i}{\lambda_i}m_i,\quad\quad \text{tr}\,\mathbf{c}=\sum_{i=1}^{m}\lambda_i^2 m_i,\quad\quad \mathbf{c}:\mathbf{d}=\mathbf{c}:\hat{\mathbf{d}}=\sum_{i=1}^{m}\dot{\lambda}_i\lambda_i m_i.
\end{equation}
We introduce the quadratic forms
\begin{equation}\label{125}
  B(\hat{\mathbf{d}})\equiv(\mathbf{c}:\hat{\mathbf{d}})\,\text{tr}\,\hat{\mathbf{d}},\quad\quad C(\hat{\mathbf{d}})\equiv \text{tr}\,\mathbf{c}\,(\text{tr}\,\hat{\mathbf{d}})^2.
\end{equation}
In view of expressions \eqref{124}, the quadratic forms \eqref{125} can be rewritten in explicit form
\begin{equation}\label{126}
  B(\hat{\mathbf{d}})= (\sum_{i=1}^{m}\dot{\lambda}_i\lambda_i m_i)(\sum_{j=1}^{m}\frac{\dot{\lambda}_j}{\lambda_j}m_j),\quad\quad
  C(\hat{\mathbf{d}})= (\sum_{i=1}^{m}\lambda_i^2 m_i)(\sum_{j=1}^{m}\frac{\dot{\lambda}_j}{\lambda_j}m_j)^2.
\end{equation}
The quadratic form in square brackets on the r.h.s. of \eqref{123} will be denoted by
\begin{equation*}
  D(\mathbf{d})\equiv -\frac{4}{3}B(\hat{\mathbf{d}}) + \frac{2}{9}C(\hat{\mathbf{d}}) + A(\mathbf{d}).
\end{equation*}
According to the representation \eqref{111} of the quadratic form $A(\mathbf{d})$, the quantity $D(\mathbf{d})$ can be written as
\begin{equation}\label{128}
  D(\mathbf{d})= E(\hat{\mathbf{d}}) + R(\tilde{\mathbf{d}}),
\end{equation}
where
\begin{equation}\label{129}
  E(\hat{\mathbf{d}})\equiv -\frac{4}{3}B(\hat{\mathbf{d}}) + \frac{2}{9}C(\hat{\mathbf{d}}) + P(\hat{\mathbf{d}}).
\end{equation}

Our next goal is to determine the properties of the quadratic form $E(\hat{\mathbf{d}})$. Consider the general case $m=3$, i.e., $m_i=1$ ($i=1,2,3$). We use expressions $\eqref{112}_1$ and \eqref{125} for the quadratic forms on the right-hand side of \eqref{129}. After a series of transformations, we arrive at the final expression for the quadratic form $E(\hat{\mathbf{d}})$:
\begin{align}\label{130}
  E(x_1,x_2,x_3)\equiv\, & x_1^{2}(4+a^{-2}+b^{-2}) + x_2^{2}(4+a^{2}+c^{-2}) \\
  &+ x_3^{2}(4+b^{2}+c^{2}) + x_1 x_2(-4a-4a^{-1}+2b^{-1}c^{-1}) \notag \\
  &+ x_1 x_3(-4b-4b^{-1}+2a^{-1}c) + x_2 x_3 (-4c-4c^{-1}+2ab), \notag
\end{align}
where the following notations are used:
\begin{equation*}
  x_1\equiv \dot{\lambda}_1^2,\quad x_2\equiv \dot{\lambda}_2^2,\quad x_3\equiv \dot{\lambda}_3^2,\quad a\equiv\frac{\lambda_1}{\lambda_2},\quad b\equiv\frac{\lambda_1}{\lambda_3},\quad c\equiv\frac{\lambda_2}{\lambda_3}\quad (a>0,\ b>0,\ c>0).
\end{equation*}
In view of the identity
\begin{equation*}
  (x-y-z)^2=x^2+y^2+z^2-2xy-2xz-2yz,
\end{equation*}
which is valid for all $x,y,z\in \mathds{R}$, expression \eqref{130} for the quadratic form $ E(x_1,x_2,x_3)$ can be simplified to
\begin{equation}\label{133}
   E(x_1,x_2,x_3)=(2x_1-x_2 a -x_3 b)^2 + (2x_2-x_1 a^{-1} -x_3 c)^2 + (2x_3-x_1 b^{-1} -x_2 c^{-1})^2.
\end{equation}
It follows from \eqref{133} that $E(\hat{\mathbf{d}})\geq 0\ \forall\ \hat{\mathbf{d}}$ and $\exists\ \hat{\mathbf{d}}\neq \mathbf{0}$ are such that $E(\hat{\mathbf{d}})= 0$; i.e., the quadratic form $E(\hat{\mathbf{d}})$ is semi-positive definite. Since the quadratic form $R(\tilde{\mathbf{d}})$ is positive definite, it follows from \eqref{128} that the quantity $D(\mathbf{d})$ is semi-positive definite, i.e.,
\begin{equation*}
  D(\mathbf{d})\geq 0\ \forall\, \mathbf{d}\quad \text{and}\quad \exists\ \mathbf{d}\neq \mathbf{0}\ \text{that}\ D(\mathbf{d})=0.
\end{equation*}

The expression for the quadratic form $E(x_1,x_2,x_3)$ for $m=2$ is derived from expression \eqref{133} by setting in the latter
\begin{equation*}
  x_1\equiv \dot{\lambda}_1^2,\quad x_2=x_3\equiv \dot{\lambda}_2^2,\quad a=b\equiv\frac{\lambda_1}{\lambda_2},\quad c=1\quad (a=b>0).
\end{equation*}
For $m=1$, we get the identities
\begin{equation*}
  x_1=x_2=x_3\equiv \dot{\lambda}_1^2,\quad\quad a=b=c=1,
\end{equation*}
whence it follows that $E(\hat{\mathbf{d}})=0\ \forall\, \hat{\mathbf{d}}$. Since in this case, $R(\tilde{\mathbf{d}})=0$, we have $D(\mathbf{d})=0\ \forall\, \mathbf{d}$; i.e., only the first summand is retained on the right-hand side of \eqref{123}.

Returning to expression \eqref{123}, we note that for $\chi(J)>0$, the first summand on the r.h.s. is a semi-positive definite quadratic form which can vanish for body motions with $\text{tr}\,\mathbf{d}=0$ for $\mathbf{d}\neq \mathbf{0}$. Since the second summand on the r.h.s. is also a semi-positive definite quadratic form, we need to determine whether both forms can vanish simultaneously when $\mathbf{d}\neq \mathbf{0}$.

An explicit expression for the contraction $\text{tr}\,\mathbf{d}$ is presented in $\eqref{124}_1$. We now determine conditions under which the quadratic form \eqref{133} vanishes for nonzero values of $x_1$, $x_2$, and $x_3$. Since in this case, each expression in parentheses on the r.h.s. of \eqref{133} must vanish, we come to the search for nontrivial solutions of the system of homogeneous equation
\begin{equation}\label{137}
  \mathbf{A}\mathbf{x}=\tilde{\mathbf{0}},\quad\quad
\mathbf{A}\equiv \left[
     \begin{array}{rrr}
    2       & -a      & -b \\
    -a^{-1} & 2       & -c \\
    -b^{-1} & -c^{-1} & 2 \\
     \end{array}
     \right],\quad
\mathbf{x}\equiv \left[
                   \begin{array}{c}
                     x_1 \\
                     x_2 \\
                     x_3 \\
                   \end{array}
                 \right],\quad
\tilde{\mathbf{0}}\equiv \left[
                   \begin{array}{c}
                     0 \\
                     0 \\
                     0 \\
                   \end{array}
                 \right].
\end{equation}
We are interested in nontrivial solutions of system \eqref{137} that are possible when the quantity $\det \mathbf{A}$ vanishes, whence we obtain
\begin{equation*}
  \det \mathbf{A}=(b-ac)^2\ \Rightarrow\ \det \mathbf{A}=0\ \Rightarrow\ b=ac\ \Leftrightarrow\ \frac{\lambda_1}{\lambda_3} = \frac{\lambda_1}{\lambda_3};
\end{equation*}
i.e., the equality $\det \mathbf{A}=0$ holds for any values of the principal stretches $\lambda_1$, $\lambda_2$, and $\lambda_3$. Thus, for any values of the principal stretches, there exist values of the quantities $\dot{\lambda}_1$, $\dot{\lambda}_2$, and $\dot{\lambda}_3$, not equal to zero simultaneously, for which the quadratic form $E(\dot{\lambda}_1,\dot{\lambda}_2,\dot{\lambda}_3)$ vanishes. The only nontrivial solution of the system is
\begin{equation}\label{139}
  x_1\in R,\ x_2=a^{-1}x_1,\ x_3=b^{-1}x_1\quad\Leftrightarrow\quad \frac{\dot{\lambda}_1}{\lambda_1}\in \mathds{R},\ \frac{\dot{\lambda}_2}{\lambda_2}=\frac{\dot{\lambda}_1}{\lambda_1},\ \frac{\dot{\lambda}_3}{\lambda_3}=\frac{\dot{\lambda}_1}{\lambda_1}.
\end{equation}
A particular case of this solution is the solution for $m=1$ ($\lambda_1=\lambda_2=\lambda_3$, $\dot{\lambda}_1=\dot{\lambda}_2=\dot{\lambda}_3$) considered above (we established that in this case, $E(\hat{\mathbf{d}})=0$). Comparing the expressions in \eqref{139} and $\eqref{124}_1$, we see that the quantities $(\text{tr}\,\mathbf{d})^2$ and  $E(\hat{\mathbf{d}})$ cannot vanish simultaneously for  motions with $\hat{\mathbf{d}}\neq \mathbf{0}$. The results of our analysis lead to the conclusion that any vol-iso  neo-Hookean material model with $\chi(J)>0$ satisfies the Hill postulate.

An alternative derivation of Hill's inequality is based on the convexity property of elastic energy for the vol-iso material models. Let us represent the elastic energy for these  material models in the form ($V = \sqrt{F \, F^T}$)
\begin{equation}
	\mathrm{W}_{\text{vol-iso}}(F) = \frac{\mu}{2} \, \Big\Vert \frac{F}{(\det F)^{\frac{1}{3}}} \Big\Vert^2 + K \, h(\det F) =
		 \frac{\mu}{2} \, \Big\Vert \frac{V}{(\det V)^{\frac{1}{3}}} \Big\Vert^2 + K \, h(\det V) =: \widehat{\mathrm{W}}(\log V)\, .
\end{equation}
We shall show that $\log V \mapsto \widehat{\mathrm{W}}(\log V)$ is strictly convex if $h \colon \mathds{R}^+ \to \mathds{R}$ satisfies $h^{\prime \prime}(\mathrm{e}^{\xi}) \, \mathrm{e}^{\xi} + h^{\prime}(\mathrm{e}^{\xi}) > 0$ for all $\xi \in \mathds{R}$. First, we have to express $\widehat{\mathrm{W}}$. Using the properties of $\log V$ it holds that
\begin{equation}
	\log \det V = \text{tr}(\log V) \quad \iff \quad \det V = \mathrm{e}^{\text{tr}(\log V)}\,,
\end{equation}
\begin{equation}
	\begin{alignedat}{2}
		\frac{V}{(\det V)^{\frac{1}{3}}} &= \frac{\exp(\log V)}{(\det V)^{\frac{1}{3}}} = \frac{\exp(\log V)}{\mathrm{e}^{\frac{1}{3} \, \text{tr}(\log V)}} = \exp(\log V - \frac{1}{3}\, \text{tr}(\log V) \, I) = \exp(\text{dev} \log V) \, .
	\end{alignedat}
\end{equation}
Therefore,
\begin{equation}
	\begin{alignedat}{2}
		\widehat{\mathrm{W}}(\log V) &= \frac{\mu}{2} \, \|\exp(\text{dev} \log V)\|^2 + K \, h(\mathrm{e}^{\text{tr}(\log V)}) \, ,\\
		\widehat{\mathrm{W}}(S) &= \frac{\mu}{2} \, \|\exp(\text{dev} S)\|^2 + K \, h(\mathrm{e}^{\text{tr}(S)}) \, .
	\end{alignedat}
\end{equation}
Note that $\text{tr}(S) \mapsto \mathrm{e}^{\text{tr}(S)}$ is clearly convex, and if $h \colon \mathds{R}^+ \to \mathds{R}$ satisfies $h^{\prime \prime}(\mathrm{e}^{\xi}) \, \mathrm{e}^{\xi} + h^{\prime}(\mathrm{e}^{\xi}) > 0$ for all $\xi \in \mathds{R}$, then the composition $\text{tr}(S) \mapsto h(\mathrm{e}^{\text{tr}(S)})$ is strictly convex in $\text{tr}(S)$. This amounts to $\chi(J) > 0$.  Moreover, it is well known that $X \mapsto \|\exp(X)\|^2$ is strictly convex \cite{HillPRSLA1970,Ogden1984}, therefore $\text{dev}\, S \mapsto \|\exp(\text{dev}\, S)\|^2$ is strictly convex. Since $S = \text{dev}\, S + \frac{1}{3} \, \text{tr}(S) \, \mathbf{I}$ this shows that for $\mu, K > 0$ the function $S \mapsto \widehat{\mathrm{W}}(S)$ is strictly convex. Using Richter's formula \cite{RichterZAMM1948,RichterZAMM1949,RichterAM1949,RichterMA1950,RichterMN1952} for the Kirchhoff stress
\begin{equation}
	\tau = \mathrm{D}_{\log V} \widehat{\mathrm{W}}(\log V)
\end{equation}
we observe that $\tau$ must then be strictly monotone in $\log V$, i.e.
\begin{equation}
	\langle \tau(\log V_1) - \tau(\log V_2) , \log V_1 - \log V_2 \rangle > 0 \quad \forall V_1 \neq V_2
\end{equation}
and this is equivalent to Hill's inequality.

\subsection{Testing neo-Hookean models using the CSP}
\label{sec:5-3}

As noted in Remark \ref{rem:5-2}, the Hill postulate and the CSP are equivalent for incompressible materials, and since we established in Section \ref{sec:5-2-1} that the incompressible neo-Hookean  model satisfies the Hill postulate, this model also satisfies the CSP.

We now need to determine whether any mixed model satisfies inequality \eqref{102}. In view of $\eqref{36}_1$, Eq. \eqref{103} leads to the following expression for the contraction on the left-hand side of inequality \eqref{102}:
\begin{equation}\label{140}
  \frac{\text{D}^{\text{ZJ}}}{\text{D}t}[\boldsymbol{\sigma}]:\mathbf{d}=\frac{1}{J}\frac{\text{D}^{\text{ZJ}}}{\text{D}t}[\boldsymbol{\tau}]:\mathbf{d} - (\boldsymbol{\sigma}:\mathbf{d})\, \text{tr}\,\mathbf{d}.
\end{equation}
Using expression \eqref{116} for the contraction $\frac{\text{D}^{\text{ZJ}}}{\text{D}t}[\boldsymbol{\tau}]:\mathbf{d}$ and expression \eqref{65} for the Cauchy stress tensor $\boldsymbol{\sigma}$, from \eqref{140} we obtain
\begin{equation}\label{141}
  \frac{\text{D}^{\text{ZJ}}}{\text{D}t}[\boldsymbol{\sigma}]:\mathbf{d}= \lambda J h^{\prime\prime}(\text{tr}(J)\,\mathbf{d})^2 + \frac{\mu}{J}[A(\mathbf{d}) + (\text{tr}\,\mathbf{d})^2 - (\mathbf{c}:\mathbf{d})\,\text{tr}\,\mathbf{d}],
\end{equation}
where the quadratic form $A(\mathbf{d})$ is defined in \eqref{109}. We introduce the quadratic form
\begin{equation}\label{142}
  F(\hat{\mathbf{d}})\equiv (\text{tr}\,\hat{\mathbf{d}})^2 (=(\text{tr}\,\mathbf{d})^2)=(\sum_{i=1}^{m}\frac{\dot{\lambda}_i}{\lambda_i}m_i)^2,
\end{equation}
where equality $\eqref{124}_1$  is used. In view of equalities \eqref{111}, $\eqref{125}_1$, and \eqref{142}, the expression on the r.h.s. of \eqref{141} can be rewritten as
\begin{equation*}
  \frac{\text{D}^{\text{ZJ}}}{\text{D}t}[\boldsymbol{\sigma}]:\mathbf{d}= \lambda Jh^{\prime\prime}(J) F(\hat{\mathbf{d}}) + \frac{\mu}{J}[P(\hat{\mathbf{d}}) + R(\tilde{\mathbf{d}}) + F(\hat{\mathbf{d}}) - B(\hat{\mathbf{d}})].
\end{equation*}
We are interested in the properties of the quadratic form
\begin{equation*}
  G(\hat{\mathbf{d}})\equiv P(\hat{\mathbf{d}}) +  F(\hat{\mathbf{d}}) - B(\hat{\mathbf{d}}).
\end{equation*}
Equalities $\eqref{112}_1$, $\eqref{126}_1$, and \eqref{142} lead to the explicit expression for  this quadratic form
\begin{equation*}
  G(\hat{\mathbf{d}})=2\sum_{i=1}^{m}(\dot{\lambda}_i)^2m_i + (\sum_{i=1}^{m}\frac{\dot{\lambda}_i}{\lambda_i}m_i)^2 -
(\sum_{i=1}^{m}\dot{\lambda}_i\lambda_i m_i)(\sum_{j=1}^{m}\frac{\dot{\lambda}_j}{\lambda_j}m_j).
\end{equation*}
This quadratic form is neither positive nor semi-positive definite. This statement can be illustrated by the following example. Let $m=1$, i.e., let $\lambda_1=\lambda_2=\lambda_3$ and $m_1=3$. Then
\begin{equation*}
  G(\hat{\mathbf{d}})= 3 (\dot{\lambda}_1)^2(\frac{3}{\lambda_1^2}-1),
\end{equation*}
and for $\lambda_1>\sqrt{3}$, we have $G(\hat{\mathbf{d}})<0$. Thus, the quadratic form $\frac{\text{D}^{\text{ZJ}}}{\text{D}t}[\boldsymbol{\sigma}]:\mathbf{d}$ is generally neither positive definite nor even semi-positive definite. Hence any mixed neo-Hookean model does not satisfy the CSP.

The fact that the vol-iso neo-Hookean models do not satisfy the CSP has been shown by Neff et al. (cf., \cite{NeffJMPS2025}, Appendix A.4).\\

\begin{remark}
\label{rem:5-6}
While no model of our neo-Hookean material compressible family does satisfy the $\text{CSP}\ \Leftrightarrow \text{TSTS-M}^+$ condition, it is known that the so-called exponential Hencky energy \cite{NeffJElast2015,NeffJMPS2025}
\begin{equation}\label{146-1}
  W_{\text{exp-Hencky}}(\mathbf{F}) = \mu\, (\mathrm{e}^{\|\log \mathbf{V}\|^2}-1) + \frac{\lambda}{2}\,(\mathrm{e}^{\log^2\!\!J}-1)
\end{equation}
satisfies $\text{TSTS-M}^+$ for $\mu,\,\lambda>0$. However, \eqref{146-1} is not polyconvex or LH-elliptic throughout. The last author has offered a 500 euro challenge for finding a compressible isotropic energy that satisfies simultaneously $\text{TSTS-M}^+$ and polyconvexity \cite{NeffRG2025}.
\end{remark}

\section{Testing neo-Hookean materials in homogeneous deformations}
\label{sec:6}

In this section, general forms of expressions for stress and strain/deformation tensors for all types of homogeneous deformations considered here are presented in Section \ref{sec:6-1}. The dependencies of kinematic and static quantities on prescribed stretches for uniaxial loading, equibiaxial loading in plane stress, and uniaxial loading in plain strain are given in Sections \ref{sec:6-2}, \ref{sec:6-3}, and \ref{sec:6-4}, respectively.

\subsection{Forms of stress and strain/deformation tensors for homogeneous deformations}
\label{sec:6-1}

Following Kossa et al. \cite{KossaMeccanica2023}, we consider three types of homogeneous deformation (Fig.~\ref{f3}): \emph{uniaxial loading (UL)} (Fig. \ref{f3},\emph{a}), \emph{equibiaxial loading in plane stress (ELP)} (Fig. \ref{f3},\emph{b}), and \emph{uniaxial loading in plane strain (ULP)} (Fig. \ref{f3},\emph{c}).
\begin{figure}
\begin{center}
\includegraphics{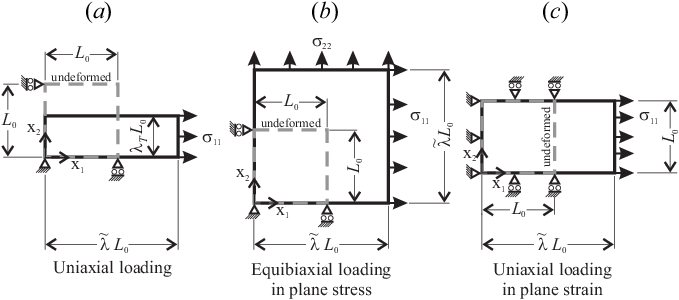}
\end{center}
\caption{Sketches of homogeneous deformations: uniaxial loading (\emph{a}), equibiaxial loading in plane stress (\emph{b}), and uniaxial loading in plane strain (\emph{c}).}
\label{f3}
\end{figure}
In these three cases, the tensors $\mathbf{F}$, $\mathbf{c}$, $\boldsymbol{\sigma}$, and $\mathbf{P}$ have the following forms:
\begin{align}\label{147}
  \mathbf{F}=& \left[
               \begin{array}{ccc}
                 \lambda_1 & 0         & 0         \\
                 0         & \lambda_2 & 0         \\
                 0         & 0         & \lambda_3 \\
               \end{array}
             \right],\quad\quad
  \mathbf{c}=\left[
               \begin{array}{ccc}
                 \lambda_1^2 & 0           & 0           \\
                 0           & \lambda_2^2 & 0           \\
                 0           & 0           & \lambda_3^2 \\
               \end{array}
             \right],      \\
  \boldsymbol{\sigma}=& \left[
               \begin{array}{ccc}
                 \sigma_{11} & 0           & 0           \\
                 0           & \sigma_{22} & 0           \\
                 0           & 0           & \sigma_{33} \\
               \end{array}
             \right],\quad\quad
 \mathbf{P}=\left[
               \begin{array}{ccc}
                 P_{11} & 0      & 0      \\
                 0      & P_{22} & 0      \\
                 0      & 0      & P_{33} \\
               \end{array}
             \right],              \notag
\end{align}
and the volume ratio $J$ has the form \eqref{20}. The Finger strain tensor $\mathbf{e}^{(2)}$ is written as follows (see \eqref{18}):
\begin{equation}\label{148}
  \mathbf{e}^{(2)}= \frac{1}{2}\left[
               \begin{array}{ccc}
                 \lambda_1^2-1 & 0             & 0             \\
                 0             & \lambda_2^2-1 & 0             \\
                 0             & 0             & \lambda_3^2-1 \\
               \end{array}
             \right].
\end{equation}
Using Eqs. \eqref{24} and $\eqref{26}_2$, we obtain the following expression for the tensor $\text{dev}\,\bar{\mathbf{c}}$:
\begin{equation}\label{149}
  \text{dev}\,\bar{\mathbf{c}}=\frac{1}{3}J^{-2/3}\left[
  \begin{array}{ccc}
                 2\lambda_1^2-\lambda_2^2-\lambda_3^2 & 0                                      & 0                                      \\
                 0                                    & -\lambda_1^2 +2\lambda_2^2-\lambda_3^2 & 0                                      \\
                 0                                    & 0                                      & -\lambda_1^2 -\lambda_2^2+2\lambda_3^2 \\
               \end{array}
             \right].
\end{equation}

We now specify the expressions for the quantities $J$, $\mathbf{e}^{(2)}$, and $\text{dev}\,\bar{\mathbf{c}}$ for the homogeneous deformations considered (hereinafter, the stretch $\tilde{\lambda}$ is assumed to be a prescribed quantity).

$\bullet$ For UL (Fig. \ref{f3},\emph{a}), the stress-strain state is subjected to the following constraints:
\begin{equation}\label{150}
  \lambda_1=\tilde{\lambda},\quad\quad \sigma_{22}=\sigma_{33}=0,\quad\quad P_{22}=P_{33}=0.
\end{equation}
Transverse strains in UL will be denoted by
\begin{equation}\label{151}
  \lambda_T\equiv \lambda_2=\lambda_3.
\end{equation}
The volume ratio $J$ is determined from \eqref{20}, $\eqref{150}_1$, and \eqref{151}
\begin{equation}\label{152}
  J= \tilde{\lambda} \lambda_T^2.
\end{equation}
For UL, the Finger strain tensor $\mathbf{e}^{(2)}$ defined in \eqref{148} has the form
\begin{equation*}
  \mathbf{e}^{(2)}= \frac{1}{2}\left[
               \begin{array}{ccc}
                 \tilde{\lambda}^2-1 & 0             & 0             \\
                 0                   & \lambda_T^2-1 & 0             \\
                 0                   & 0             & \lambda_T^2-1 \\
               \end{array}
             \right],
\end{equation*}
and in view of expressions \eqref{149}, $\eqref{150}_1$, and \eqref{151}, the tensor $\text{dev}\,\bar{\mathbf{c}}$ can be written as
\begin{equation}\label{154}
  \text{dev}\,\bar{\mathbf{c}}=\frac{1}{3}J^{-2/3}\left[
  \begin{array}{ccc}
                 2(\tilde{\lambda}^2-\lambda_T^2) & 0                             & 0                             \\
                 0                                & \lambda_T^2-\tilde{\lambda}^2 & 0                             \\
                 0                                & 0                             & \lambda_T^2-\tilde{\lambda}^2 \\
               \end{array}
             \right].
\end{equation}
Using \eqref{33}, \eqref{147}, and \eqref{150}, we obtain the following expression for the quantity $P_{11}$:
\begin{equation}\label{155}
  P_{11}=\lambda_T^2 \sigma_{11}.
\end{equation}

$\bullet$ For ELP (Fig. \ref{f3},\emph{b}), the stress-strain state is subjected to the following constraints:
\begin{equation}\label{156}
  \lambda_1=\lambda_2=\tilde{\lambda},\quad\quad \sigma_{33}=0,\quad\quad \sigma_{11}=\sigma_{22},\quad\quad P_{11}=P_{22},\quad P_{33}=0.
\end{equation}
Transverse strains in ELP will be denoted by
\begin{equation}\label{157}
  \lambda_T\equiv \lambda_3.
\end{equation}
The volume ratio $J$ is determined from \eqref{20}, $\eqref{156}_1$, and \eqref{157}:
\begin{equation}\label{158}
  J= \tilde{\lambda}^2 \lambda_T.
\end{equation}
For ELP, the Finger strain tensor $\mathbf{e}^{(2)}$ defined in \eqref{148} has the form
\begin{equation}\label{159}
  \mathbf{e}^{(2)}= \frac{1}{2}\left[
               \begin{array}{ccc}
                 \tilde{\lambda}^2-1 & 0                   & 0             \\
                 0                   & \tilde{\lambda}^2-1 & 0             \\
                 0                   & 0                   & \lambda_T^2-1 \\
               \end{array}
             \right],
\end{equation}
and in view of expressions \eqref{149}, $\eqref{156}_1$, and \eqref{157}, the tensor $\text{dev}\,\bar{\mathbf{c}}$ can be written as
\begin{equation}\label{160}
  \text{dev}\,\bar{\mathbf{c}}=\frac{1}{3}J^{-2/3}\left[
  \begin{array}{ccc}
                 \tilde{\lambda}^2-\lambda_T^2 & 0                             & 0                                \\
                 0                             & \tilde{\lambda}^2-\lambda_T^2 & 0                                \\
                 0                             & 0                             & 2(\lambda_T^2-\tilde{\lambda}^2) \\
               \end{array}
             \right].
\end{equation}
In view of \eqref{33}, \eqref{147}, and \eqref{158}, the nonzero components of the first P-K stress tensor can be written as
\begin{equation}\label{161}
  P_{11}= P_{22}= \tilde{\lambda}\lambda_T\sigma_{11}.
\end{equation}

$\bullet$ For ULP (Fig. \ref{f3},\emph{c}), the stress-strain state is subjected to the following constraints:
\begin{equation}\label{162}
  \lambda_1=\tilde{\lambda},\quad \lambda_2=1,\quad \sigma_{33}=0,\quad P_{33}=0.
\end{equation}
Transverse strains in ULP will be denoted by
\begin{equation}\label{163}
  \lambda_T\equiv \lambda_3.
\end{equation}
The volume ratio $J$ is determined from \eqref{20}, $\eqref{162}_{1,2}$, and \eqref{163}:
\begin{equation}\label{164}
  J= \tilde{\lambda}\lambda_T.
\end{equation}
For ULP, the Finger strain tensor $\mathbf{e}^{(2)}$ defined in \eqref{148} has the form
\begin{equation}\label{165}
  \mathbf{e}^{(2)}= \frac{1}{2}\left[
               \begin{array}{ccc}
                 \tilde{\lambda}^2-1 & 0    & 0             \\
                 0                   & 0    & 0             \\
                 0                   & 0    & \lambda_T^2-1 \\
               \end{array}
             \right],
\end{equation}
and in view of expressions \eqref{149}, $\eqref{162}_{1,2}$, and \eqref{163}, the tensor $\text{dev}\,\bar{\mathbf{c}}$ can be written as
\begin{equation}\label{166}
  \text{dev}\,\bar{\mathbf{c}}=\frac{1}{3}J^{-2/3}\left[
  \begin{array}{ccc}
                 2\tilde{\lambda}^2 -1 -\lambda_T^2 & 0                                  & 0                                  \\
                 0                                  & -\tilde{\lambda}^2 +2 -\lambda_T^2 & 0                                  \\
                 0                                  & 0                                  & 2\lambda_T^2 -1 -\tilde{\lambda}^2 \\
               \end{array}
             \right].
\end{equation}
In view of \eqref{33}, \eqref{147}, and \eqref{164}, the nonzero components of the first P-K stress tensor can be written as
\begin{equation}\label{167}
  P_{11}= \lambda_T\sigma_{11},\  P_{22}=\tilde{\lambda}\lambda_T \sigma_{22}.
\end{equation}

Kossa et al. \cite{KossaMeccanica2023} used the vol-iso material model with volumetric energy given by the expression $\frac{K}{2}(J^2-1)$ (i.e., using volumetric function \#7) and varied Poisson's ratio in the range $-1\leq \nu \leq 0.5$. Since the bulk modulus $K$ is non-negative in this range of Poisson's ratio and since the functions $h(J)$ used in the present work are non-negative (see Section \ref{sec:4}), it follows that the volumetric energy is also non-negative. For the mixed models, the non-negativity of the volumetric energy $\lambda h(J)$ is determined by the non-negativity of the parameter $\lambda$. Since the non-negativity of this parameter is guaranteed by the range of Poisson's ratio $0\leq \nu \leq 0.5$, in order for the volumetric energies to be non-negative for both material models (see, e.g., \cite{HartmannIJSS2003}), here we restrict ourselves to the following set of values of Poisson's ratio $\nu$  for compressible neo-Hookean materials:
\begin{equation}\label{168}
  \nu = \{0,\,0.25,\,0.4,\,0.45,\,0.499,\,0.4999\}.
\end{equation}
The value $\nu=0.5$ is assigned to the classical incompressible neo-Hookean model.

\subsection{Uniaxial loading}
\label{sec:6-2}

The dependencies of stresses and unknown lateral principal stretches on the prescribed longitudinal stretch obtained by solving the uniaxial loading problem for the incompressible isotropic neo-Hookean material model and mixed and vol-iso compressible isotropic neo-Hookean material models are presented in Sections \ref{sec:6-2-1}, \ref{sec:6-2-2}, and \ref{sec:6-2-3}, respectively.

\subsubsection{Incompressible isotropic neo-Hookean material}
\label{sec:6-2-1}

Setting $J=1$, from \eqref{152} we obtain
\begin{equation}\label{169}
  \lambda_T= \tilde{\lambda}^{-1/2}.
\end{equation}
In view of \eqref{55} and \eqref{148}, the components of the Cauchy stress tensor can be written as
\begin{equation}\label{170}
  \sigma_{11} = \mu\,(\tilde{\lambda}^2-1)-p,\quad\quad \sigma_{22} = \sigma_{33} = \mu\,(\lambda_T^2-1)-p.
\end{equation}
Determining the Lagrange multiplier $p$ from $\eqref{150}_2$ and $\eqref{170}_2$ and using \eqref{169}, we obtain
\begin{equation}\label{171}
  \sigma_{11} = \mu\,(\tilde{\lambda}^2-\tilde{\lambda}^{-1}).
\end{equation}
In view of \eqref{169} and \eqref{171}, from \eqref{155} we get
\begin{equation}\label{172}
   P_{11}= \tilde{\lambda}^{-1} \sigma_{11} = \mu\,(\tilde{\lambda}-\tilde{\lambda}^{-2}).
\end{equation}

The limiting values of $\lambda_T$, $\sigma_{11}$, and $P_{11}$ in extreme states are obtained from expressions \eqref{169}, \eqref{171}, and \eqref{172} and presented in Table \ref{t2}.
\begin{table}
\caption{Limiting values of $\lambda_T$, $\sigma_{11}$, and $P_{11}$ in extreme states where $\tilde{\lambda}\rightarrow 0$ and $\tilde{\lambda}\rightarrow \infty$ in the UL and ELP problems for the incompressible isotropic neo-Hookean material model}
\label{t2}
\begin{tabular}{lll}
\hline\noalign{\smallskip}
 Quantity                       & $\tilde{\lambda}\rightarrow 0$ & $\tilde{\lambda}\rightarrow \infty$ \\
\noalign{\smallskip}\hline\noalign{\smallskip}
 $\lambda_T(\tilde{\lambda})$   & $+\infty$                      &  0                                  \\
 $\sigma_{11}(\tilde{\lambda})$ & $-\infty$                      &  $+\infty$                          \\
 $P_{11}(\tilde{\lambda})$      & $-\infty$                      &  $+\infty$                          \\
\noalign{\smallskip}\hline
\end{tabular}
\end{table}
We assume that these limiting values correspond to physically reasonable responses for idealized hyperelastic materials.

\subsubsection{Compressible isotropic mixed neo-Hookean material models}
\label{sec:6-2-2}

In view of \eqref{65} and \eqref{148}, the components of the Cauchy stress tensor can be written as
\begin{equation}\label{173}
  \sigma_{11} = \lambda\, h^{\prime}(J) + \frac{\mu}{J} (\tilde{\lambda}^2-1),\quad\quad
  \sigma_{22} = \sigma_{33} = \lambda\, h^{\prime}(J) + \frac{\mu}{J} (\lambda_T^2-1).
\end{equation}
Using $\eqref{150}_2$ and \eqref{152}, we obtain the nonlinear implicit dependence of $\lambda_T$ on $\tilde{\lambda}$ in the general case:
\begin{equation}\label{174}
  \lambda\, h^{\prime}(\tilde{\lambda}\lambda_T^2) + \frac{\mu}{\tilde{\lambda}} (1-\lambda_T^{-2})=0.
\end{equation}

We first consider the value $\nu=0$ for Poisson's ratio. Since for this value of $\nu$, $\lambda=0$ (see $\eqref{72}_2$), from \eqref{174} we obtain
\begin{equation}\label{175}
  \lambda_T=1.
\end{equation}
Then from \eqref{152}, \eqref{155} and $\eqref{173}_1$, we get
\begin{equation}\label{176}
   \sigma_{11}=P_{11}=  \mu\,(\tilde{\lambda}-\tilde{\lambda}^{-1}).
\end{equation}
The values of $\lambda_T$ in \eqref{175} and $\sigma_{11}$ and $P_{11}$ in \eqref{176} are valid for any functions $h^{\prime}(J)$. The fact that for $\nu=0$, there is no lateral deformation for any value $\tilde{\lambda}$ is considered a physically reasonable result (see, e.g., \cite{KellermannZAMM2016}) by analogy with deformation under UL when using the equations of linear elasticity theory.

For the remaining values of $\nu$ from the interval $0<\nu<0.5$, the value of $\lambda_T$ should be determined from the nonlinear equation \eqref{174}. In the particular case of mixed model \#7, the function $h^{\prime}(J)$ has the form $\eqref{93}_2$. In this case, Eq. \eqref{174} has an explicit solution (cf., \cite{PenceMMS2015}, Eq. (4.25))
\begin{equation*}
  \lambda_T=\left[\frac{1}{2a}(-b + \sqrt{b^2-4ac})\right]^{1/2},
\end{equation*}
where
\begin{equation*}
  a\equiv \lambda\tilde{\lambda},\quad\quad b\equiv \frac{\mu}{\tilde{\lambda}},\quad\quad c\equiv -\frac{\mu}{\tilde{\lambda}}.
\end{equation*}
For the remaining volumetric functions $h^{\prime}(J)$ considered in this book, the dependence $\lambda_T(\tilde{\lambda})$ is determined from \eqref{174} using the Wolfram Mathematica software. Substitution of the obtained dependence into $\eqref{173}_1$ with the use of  expression \eqref{152} yields the dependence $\sigma_{11}(\tilde{\lambda})$. Using the dependencies $\lambda_T(\tilde{\lambda})$ and $\sigma_{11}(\tilde{\lambda})$, from \eqref{155} we find the dependence  $P_{11}(\tilde{\lambda})$.

Plots of $\lambda_T$ versus $\tilde{\lambda}$ for mixed models \#1--4 are shown  in Fig.~\ref{f4} along  with dependencies \eqref{169} for the incompressible neo-Hookean material model ($\nu=0.5$).
\begin{figure}
\begin{center}
\includegraphics{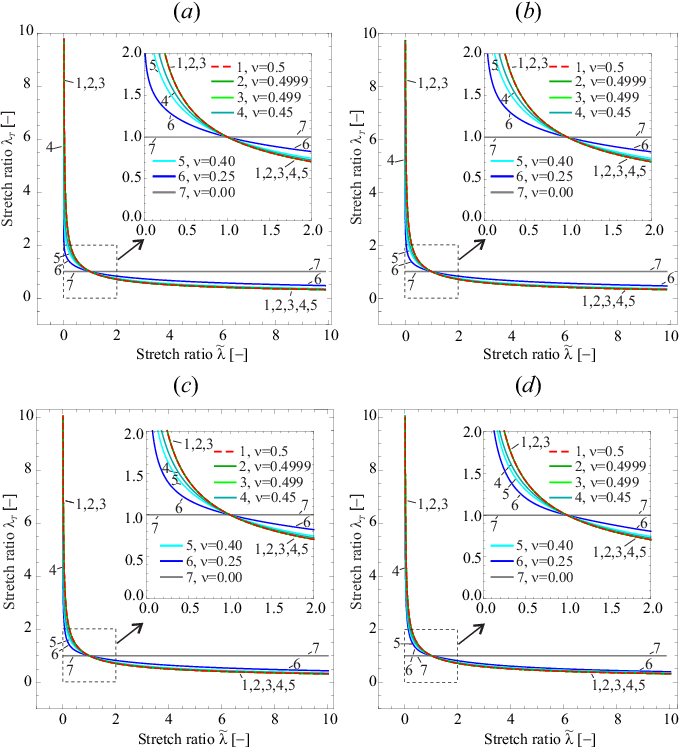}
\end{center}
\caption{Plots of $\lambda_T$ versus $\tilde{\lambda}$ in the UL problem for mixed models \#1 (\emph{a}), \#2 (\emph{b}), \#3 (\emph{c}), and \#4 (\emph{d}).}
\label{f4}
\end{figure}
From the plots and an analysis of the solutions using the Wolfram Mathematica software, it follows that the limiting values of $\lambda_T$ for these material models with $\nu\neq 0$ (see Table~\ref{t3}) agree with the physically reasonable limiting values in Table \ref{t2}.
\begin{table}
\caption{Limiting values of $\lambda_T$, $\sigma_{11}$, and $P_{11}$ in extreme states where $\tilde{\lambda}\rightarrow 0$ and $\tilde{\lambda}\rightarrow \infty$ in the UL problem for compressible isotropic material models with $0\leq\nu< 0.5$}
\label{t3}
\begin{tabular}{llllll}
\hline\noalign{\smallskip}
Model    &                                & \multicolumn{2}{c}{Mixed models}                                    & \multicolumn{2}{c}{Vol-iso models} \\
ID       & Quantity${}^{\flat}$           & $\tilde{\lambda}\rightarrow 0$ & $\tilde{\lambda}\rightarrow \infty$ & $\tilde{\lambda}\rightarrow 0$ & $\tilde{\lambda}\rightarrow \infty$ \\
\noalign{\smallskip}\hline\noalign{\smallskip}
         & $\lambda_T(\tilde{\lambda})$   & $\textcolor{green}{+\infty}$   &  $\textcolor{green}{0}$             &
                                           \textcolor{red}{0}              &  $\textcolor{red}{+\infty}$         \\
   1     & $\sigma_{11}(\tilde{\lambda})$ & $\textcolor{green}{-\infty}$   &  $\textcolor{green}{+\infty}$       &
                                            $\textcolor{green}{-\infty}$   &  \textcolor{red}{0}                 \\
         & $P_{11}(\tilde{\lambda})$      & $\textcolor{green}{-\infty}$   &  $\textcolor{green}{+\infty}$       &
                                            $\textcolor{green}{-\infty}$   &  \textcolor{red}{0}                 \\
\noalign{\smallskip}\hline\noalign{\smallskip}
         & $\lambda_T(\tilde{\lambda})$   & $\textcolor{green}{+\infty}$   &  $\textcolor{green}{0}$             &
                                            $\textcolor{green}{+\infty}$   &  $\textcolor{red}{+\infty}$         \\
   2     & $\sigma_{11}(\tilde{\lambda})$ & $\textcolor{green}{-\infty}$   &  $\textcolor{green}{+\infty}$       &
                                            $\textcolor{green}{-\infty}$   &  $\textcolor{red}{\ast}$            \\
         & $P_{11}(\tilde{\lambda})$      & $\textcolor{green}{-\infty}$   &  $\textcolor{green}{+\infty}$       &
                                            $\textcolor{green}{-\infty}$   &  $\textcolor{green}{+\infty}$       \\
\noalign{\smallskip}\hline\noalign{\smallskip}
         & $\lambda_T(\tilde{\lambda})$   & $\textcolor{green}{+\infty}$   &  $\textcolor{green}{0}$             &
                                            $\textcolor{green}{+\infty}$   &  $\textcolor{green}{0}$            \\
   3     & $\sigma_{11}(\tilde{\lambda})$ & $\textcolor{green}{-\infty}$   &  $\textcolor{green}{+\infty}$       &
                                            $\textcolor{green}{-\infty}$   &  $\textcolor{green}{+\infty}$       \\
         & $P_{11}(\tilde{\lambda})$      & $\textcolor{green}{-\infty}$   &  $\textcolor{green}{+\infty}$       &
                                            $\textcolor{green}{-\infty}$   &  $\textcolor{green}{+\infty}$       \\
\noalign{\smallskip}\hline\noalign{\smallskip}
         & $\lambda_T(\tilde{\lambda})$   & $\textcolor{green}{+\infty}$   &  $\textcolor{green}{0}$             &
                                            $\textcolor{green}{+\infty}$   &  $\textcolor{green}{0}$            \\
   4     & $\sigma_{11}(\tilde{\lambda})$ & $\textcolor{green}{-\infty}$   &  $\textcolor{green}{+\infty}$       &
                                            $\textcolor{green}{-\infty}$   &  $\textcolor{green}{+\infty}$       \\
         & $P_{11}(\tilde{\lambda})$      & $\textcolor{green}{-\infty}$   &  $\textcolor{green}{+\infty}$       &
                                            $\textcolor{green}{-\infty}$   &  $\textcolor{green}{+\infty}$       \\
\noalign{\smallskip}\hline\noalign{\smallskip}
         & $\lambda_T(\tilde{\lambda})$   & $\textcolor{red}{\ast}$        &  $\textcolor{green}{0}$             &
                                            \textcolor{red}{0}             &  $\textcolor{green}{0}$             \\
   5     & $\sigma_{11}(\tilde{\lambda})$ & $\textcolor{green}{-\infty}$   &  $\textcolor{green}{+\infty}$       &
                                            $\textcolor{green}{-\infty}$   &  $\textcolor{green}{+\infty}$       \\
         & $P_{11}(\tilde{\lambda})$      & $\textcolor{green}{-\infty}$   &  $\textcolor{green}{+\infty}$       &
                                            $\textcolor{green}{-\infty}$   &  $\textcolor{green}{+\infty}$       \\
\noalign{\smallskip}\hline\noalign{\smallskip}
         & $\lambda_T(\tilde{\lambda})$   & $\textcolor{red}{\ast}$        &  $\textcolor{green}{0}$             &
                                            \textcolor{red}{0}             &  $\textcolor{red}{+\infty}$         \\
   6     & $\sigma_{11}(\tilde{\lambda})$ & $\textcolor{green}{-\infty}$   &  $\textcolor{green}{+\infty}$       &
                                            $\textcolor{green}{-\infty}$   &  $\textcolor{green}{+\infty}$       \\
         & $P_{11}(\tilde{\lambda})$      & $\textcolor{green}{-\infty}$   &  $\textcolor{green}{+\infty}$       &
                                            $\textcolor{green}{-\infty}$   &  $\textcolor{green}{+\infty}$       \\
\noalign{\smallskip}\hline\noalign{\smallskip}
         & $\lambda_T(\tilde{\lambda})$   & \textcolor{red}{1}             &  $\textcolor{green}{0}$             &
                                            \textcolor{red}{0}             &  $\textcolor{green}{0}$             \\
   7     & $\sigma_{11}(\tilde{\lambda})$ & $\textcolor{green}{-\infty}$   &  $\textcolor{green}{+\infty}$       &
                                            $\textcolor{red}{-3K}$         &  $\textcolor{green}{+\infty}$       \\
         & $P_{11}(\tilde{\lambda})$      & $\textcolor{green}{-\infty}$   &  $\textcolor{green}{+\infty}$       &
                                            \textcolor{red}{0}             &  $\textcolor{green}{+\infty}$       \\
\noalign{\smallskip}\hline\noalign{\smallskip}
         & $\lambda_T(\tilde{\lambda})$   & $\textcolor{green}{+\infty}$   &  $\textcolor{green}{0}$             &
                                            $\textcolor{green}{+\infty}$   &  $\textcolor{green}{0}$             \\
   8     & $\sigma_{11}(\tilde{\lambda})$ & $\textcolor{green}{-\infty}$   &  $\textcolor{green}{+\infty}$       &
                                            $\textcolor{green}{-\infty}$   &  $\textcolor{green}{+\infty}$       \\
         & $P_{11}(\tilde{\lambda})$      & $\textcolor{green}{-\infty}$   &  $\textcolor{green}{+\infty}$       &
                                            $\textcolor{green}{-\infty}$   &  $\textcolor{green}{+\infty}$       \\
\noalign{\smallskip}\hline
\end{tabular}
\footnotesize
\begin{itemize}
\item[${}^{\flat}$]An asterisk ($\ast$) denotes some finite limiting values, and green and red colors indicate physically reasonable and unreasonable values of a quantity, respectively.
\end{itemize}
\normalsize
\end{table}
Note also that the obtained plots for mixed model \#1 agree with the plots obtained for the same material model by Ehlers and Eipper (cf., \cite{EhlersAM1998}, Fig. 3, curve (a)).

Plots of $\lambda_T$ versus $\tilde{\lambda}$ for mixed models \#5--8 are presented in Fig.~\ref{f5}.
\begin{figure}
\begin{center}
\includegraphics{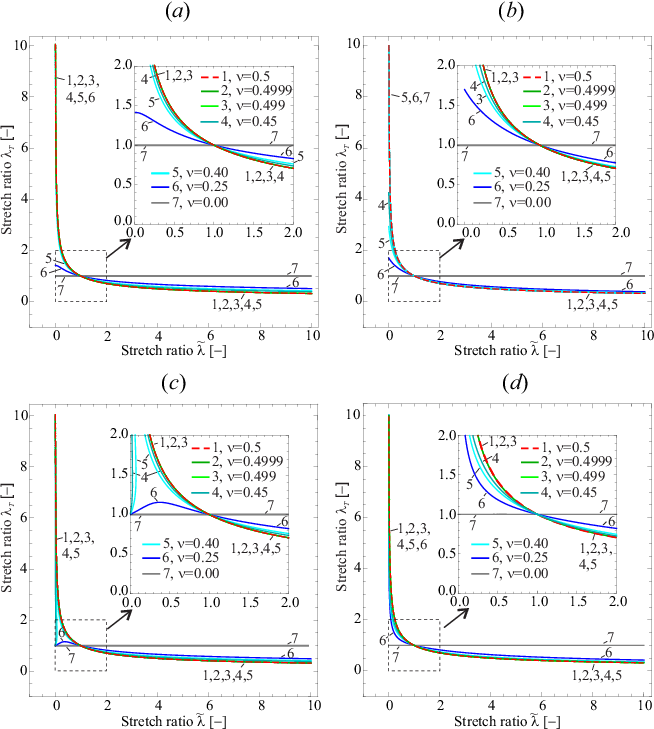}
\end{center}
\caption{Plots of $\lambda_T$ versus $\tilde{\lambda}$ in the UL problem for mixed models \#5 (\emph{a}), \#6 (\emph{b}), \#7 (\emph{c}), and \#8 (\emph{d}).}
\label{f5}
\end{figure}
Analysis of the solutions using the Wolfram Mathematica software shows that for $\lambda_T\rightarrow 0$, the limiting values of $\lambda_T$ for material models \#5-7 with $\nu\neq 0$ do not agree with the physically reasonable limiting values in Table \ref{t2}. In particular, using \eqref{169}, for mixed model \#7 we obtain the same value $\lim \limits_{\tilde{\lambda} \to 0} \lambda_T = 1$ for all values of Poisson's ratio $0<\nu <0.5$ (see also \cite{PenceMMS2015}, Eq. $(4.26)_1$). The dependencies $\lambda_T(\tilde{\lambda})$ obtained for model \#7 (see Fig.~\ref{f5},\emph{c}) agree with the dependencies derived by Pence and Gou (cf., \cite{PenceMMS2015}, Fig. 6). For mixed models \#5 and \#6, different limiting values of $\lambda_T$ for $\tilde{\lambda}\rightarrow 0$ are obtained using different values of Poisson's ratio from the set \eqref{168} (see Table~\ref{t3}).

Plots of the Cauchy stress $\sigma_{11}$ versus the stretch $\tilde{\lambda}$ for mixed material models \#1--4 and \#5--8 are shown in Figs.~\ref{f6} and~\ref{f7}, respectively.
\begin{figure}
\begin{center}
\includegraphics{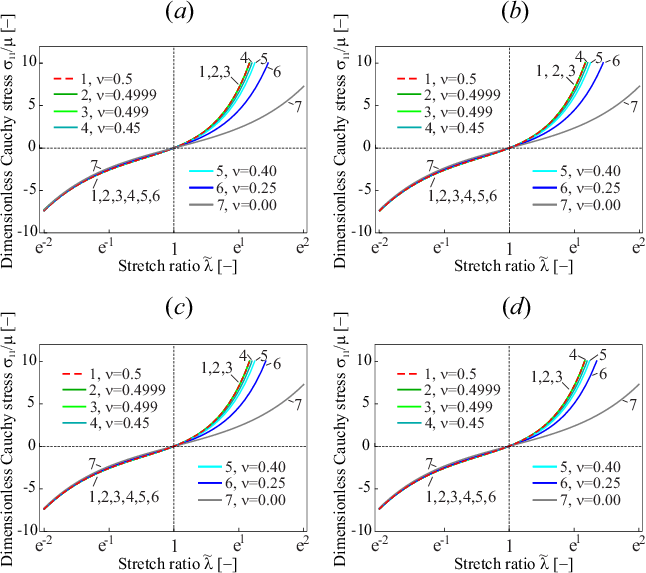}
\end{center}
\caption{Plots of $\sigma_{11}$ versus $\tilde{\lambda}$ in the UL problem for mixed models \#1 (\emph{a}), \#2 (\emph{b}), \#3 (\emph{c}), and \#4 (\emph{d}).}
\label{f6}
\end{figure}
\begin{figure}
\begin{center}
\includegraphics{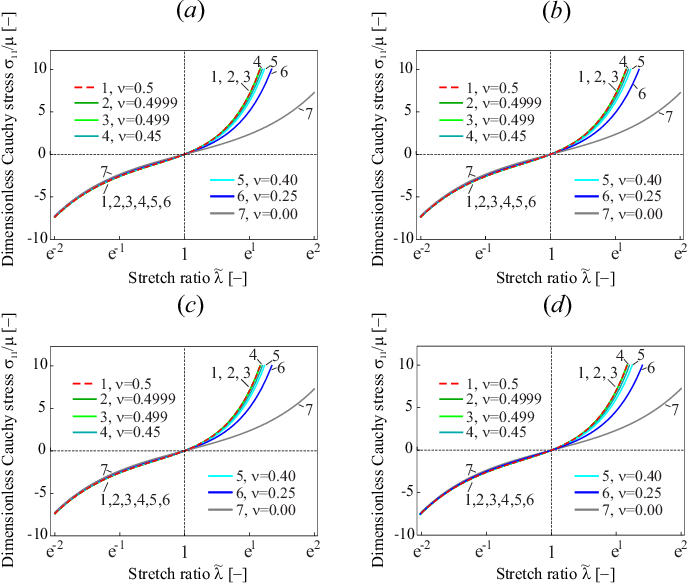}
\end{center}
\caption{Plots of $\sigma_{11}$ versus $\tilde{\lambda}$ in the UL problem for mixed models \#5 (\emph{a}), \#6 (\emph{b}), \#7 (\emph{c}), and \#8 (\emph{d}).}
\label{f7}
\end{figure}
Similar plots of the engineering (1st P-K, nominal) stress $P_{11}$ versus the stretch $\tilde{\lambda}$ for mixed material models \#1--4 and \#5--8 are shown in Figs.~\ref{f8} and~\ref{f9}, respectively.
\begin{figure}
\begin{center}
\includegraphics{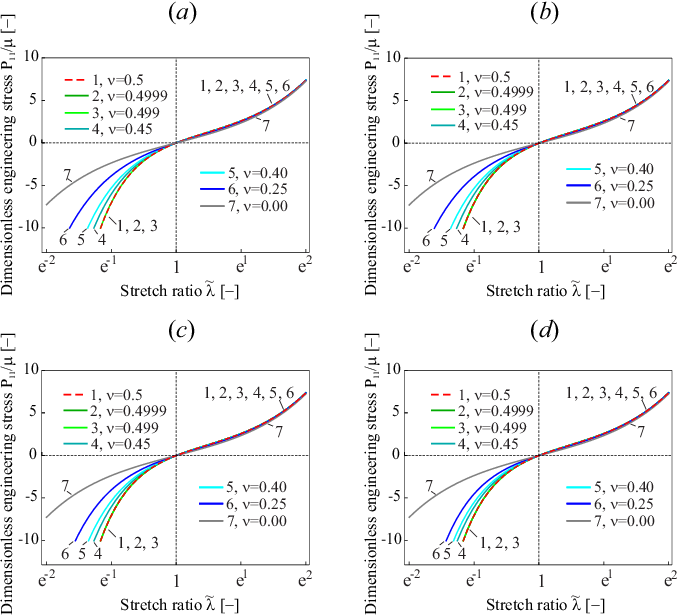}
\end{center}
\caption{Plots of $P_{11}$ versus $\tilde{\lambda}$ in the UL problem for mixed models \#1 (\emph{a}), \#2 (\emph{b}), \#3 (\emph{c}), and \#4 (\emph{d}).}
\label{f8}
\end{figure}
\begin{figure}
\begin{center}
\includegraphics{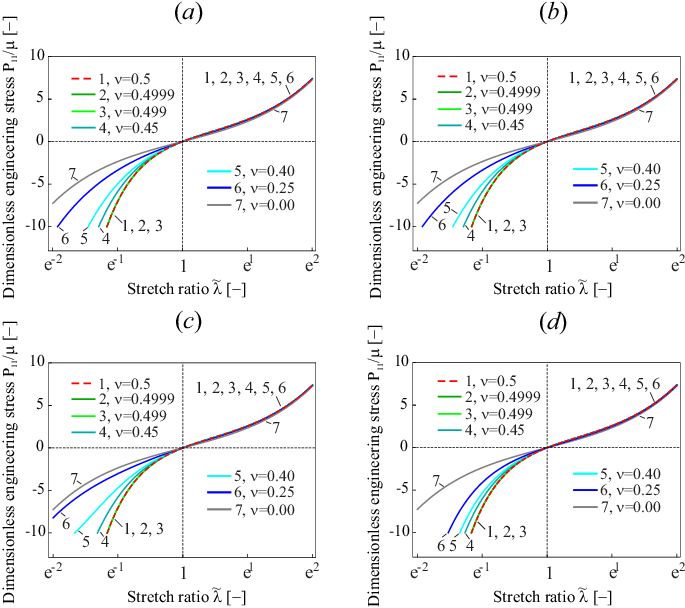}
\end{center}
\caption{Plots of $P_{11}$ versus $\tilde{\lambda}$ in the UL problem for mixed models \#5 (\emph{a}), \#6 (\emph{b}), \#7 (\emph{c}), and \#8 (\emph{d}).}
\label{f9}
\end{figure}
We see that the stresses obtained for all compressible material models considered in this book under slight compressibility conditions (with $\nu=0.499$ and $\nu=0.4999$) are close to the stresses for the incompressible nH model (with $\nu=0.5$). Note that the plots of $\sigma_{11}(\tilde{\lambda})$ obtained for mixed model \#7 (see Fig.~\ref{f7},\emph{c}) agree with the plots obtained by Pence and Gou for the same material model (cf., \cite{PenceMMS2015}, Fig. 7). We used the Wolfram Mathematica software to determine the limiting values of $\sigma_{11}$ and $P_{11}$ in extreme states and obtained the limiting values of these quantities (see Table~\ref{t3}) that agree with the physically reasonable values  presented in Table ~\ref{t2}.

The solution of the UL problem for the mixed models leads to the conclusion that the most reliable physically reasonable solutions can be obtained using volumetric functions from the Hartmann--Neff family and volumetric function \#8. First, for values of Poisson's ratio $\nu=0$, there is no lateral strength. Second, for values of Poisson's ratio $\nu\neq 0$ for material models with these volumetric functions in extreme states, the lateral stretch $\lambda_T$ and the stresses $\sigma_{11}$ and $P_{11}$ have physically reasonable limiting values that agree with the limiting values of these quantities for incompressible materials. Third, the solutions for $\lambda_T$, $\sigma_{11}$, and $P_{11}$ for mixed models with $\nu=0.499$ and $\nu=0.4999$ are close to the corresponding solutions for the classical incompressible nH material model.

\subsubsection{Compressible isotropic vol-iso neo-Hookean material models}
\label{sec:6-2-3}

In view of \eqref{68} and \eqref{154}, the components of the Cauchy stress tensor can be written as
\begin{align}\label{182}
  \sigma_{11} &= K\, h^{\prime}(J) + \frac{2}{3}\mu\, J^{-5/3} (\tilde{\lambda}^2-\lambda_T^2), \\
  \sigma_{22} &= \sigma_{33} = K\, h^{\prime}(J) + \frac{1}{3} \mu\, J^{-5/3} (\lambda_T^2-\tilde{\lambda}^2). \notag
\end{align}
Regardless of the choice of Poisson's ratio $\nu \in [0,0.5)$, Eqs. $\eqref{150}_2$, \eqref{152}, and $\eqref{182}_2$ lead to the following nonlinear equation for the dependence  $\lambda_T$ vs. $\tilde{\lambda}$:
\begin{equation*}
  K\, h^{\prime}(\tilde{\lambda}\lambda_T^2) + \frac{1}{3} \mu\, J^{-5/3} (\lambda_T^2-\tilde{\lambda}^2)=0.
\end{equation*}
Summing the left and right sides of equalities in \eqref{182} and taking into account equalities $\sigma_{22}=\sigma_{33}=0$, we obtain
\begin{equation}\label{184}
  \sigma_{11} = 3K\, h^{\prime}(\tilde{\lambda}\lambda_T^2).
\end{equation}
Substitution of $\lambda_T(\tilde{\lambda})$ into the r.h.s. of \eqref{184} yields the dependence $\sigma_{11}(\tilde{\lambda})$. The dependence $P_{11}(\tilde{\lambda})$ is obtained from \eqref{155} using the dependencies $\lambda_T(\tilde{\lambda})$ and $\sigma_{11}(\tilde{\lambda})$.

Plots of $\lambda_T$ versus $\tilde{\lambda}$ for vol-iso models \#1--4, and \#5--8 are presented in Figs.~\ref{f10} and \ref{f11}, respectively, along with dependencies \eqref{169} for incompressible nH material ($\nu=0.5$).
\begin{figure}
\begin{center}
\includegraphics{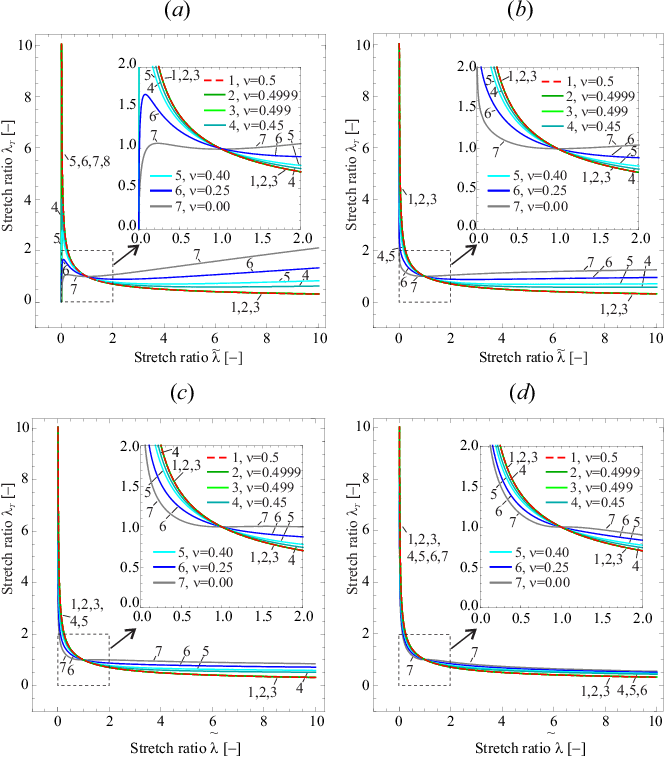}
\end{center}
\caption{Plots of $\lambda_T$ versus $\tilde{\lambda}$ in the UL problem for vol-iso models \#1 (\emph{a}), \#2 (\emph{b}), \#3 (\emph{c}), and \#4 (\emph{d}).}
\label{f10}
\end{figure}
\begin{figure}
\begin{center}
\includegraphics{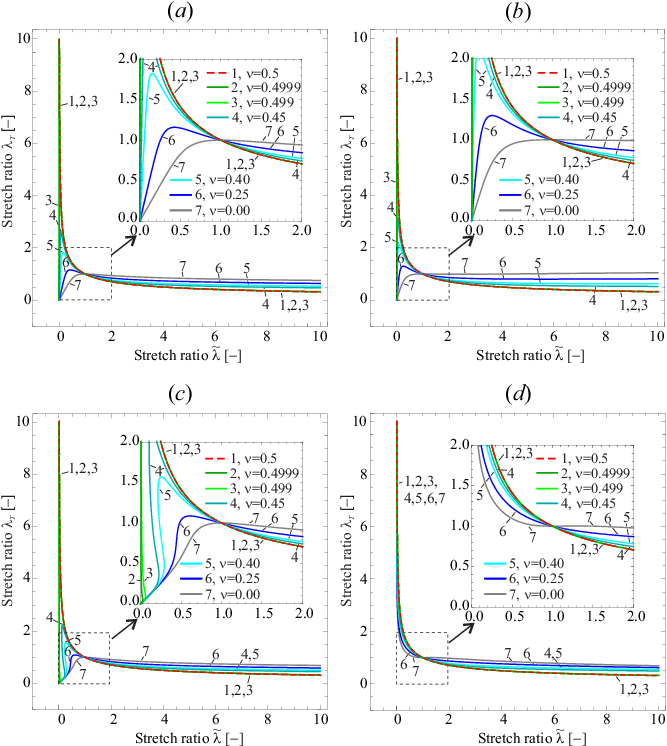}
\end{center}
\caption{Plots of $\lambda_T$ versus $\tilde{\lambda}$ in the UL problem for vol-iso models \#5 (\emph{a}), \#6 (\emph{b}), \#7 (\emph{c}), and \#8 (\emph{d}).}
\label{f11}
\end{figure}
Analysis of the solutions using the Wolfram Mathematica software shows that the limiting values of $\lambda_T$ in extreme states for vol-iso material models with $0\leq\nu< 0.5$ agree with the physically reasonable limiting values in Table \ref{t2} only for the models \#3,4,8 (see Table~\ref{t3}). For the remaining models (\#1,2,5,6,7), data on the limiting physically unreasonable values of $\lambda_T$ are presented in Table \ref{t3}. Note that the plots of $\lambda_T(\tilde{\lambda})$ in Fig.~\ref{f10},\emph{a} for vol-iso model \#1 agree with the plot (a) in Fig. 2 in \cite{EhlersAM1998} for the same material model. The plots for vol-iso model \#3 (see Fig. \ref{f10},\emph{c}) agree with the plots for the same material model in Fig. 9 in \cite{PenceMMS2015}. In addition, the plots of $\lambda_T(\tilde{\lambda})$ in Fig. \ref{f11},\emph{c} for vol-iso material model \#7 agree with the plots in Fig. 2 in \cite{KossaMeccanica2023} for the same material model.

Plots of the Cauchy stress $\sigma_{11}$ versus the stretch $\tilde{\lambda}$ for vol-iso material models \#1-4 and \#5-8 are shown in Figs.~\ref{f12} and~\ref{f13}, respectively.
\begin{figure}
\begin{center}
\includegraphics{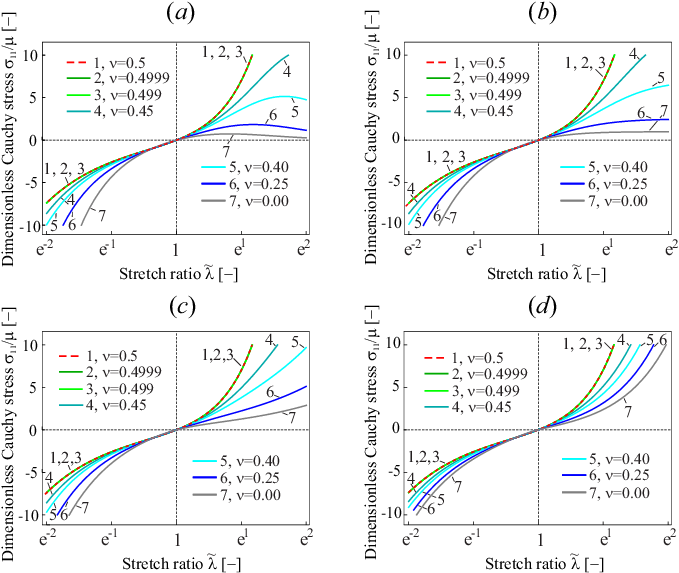}
\end{center}
\caption{Plots of $\sigma_{11}$ versus $\tilde{\lambda}$ in the UL problem for vol-iso models \#1 (\emph{a}), \#2 (\emph{b}), \#3 (\emph{c}), and \#4 (\emph{d}).}
\label{f12}
\end{figure}
\begin{figure}
\begin{center}
\includegraphics{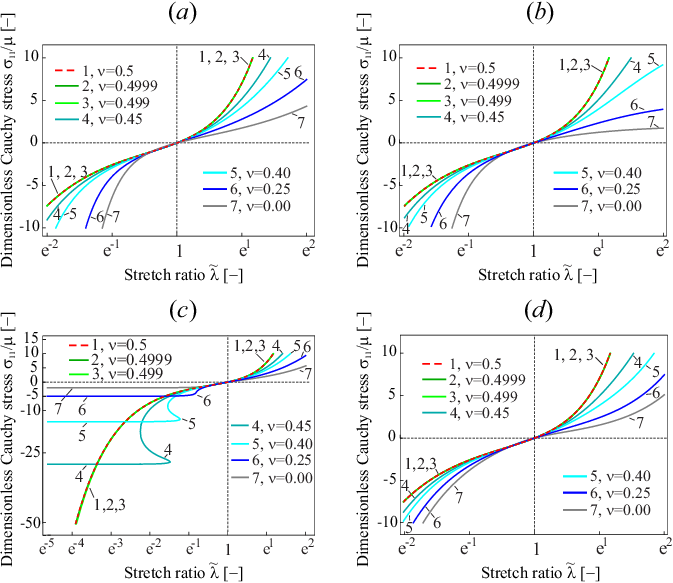}
\end{center}
\caption{Plots of $\sigma_{11}$ versus $\tilde{\lambda}$ in the UL problem for vol-iso models \#5 (\emph{a}), \#6 (\emph{b}), \#7 (\emph{c}), and \#8 (\emph{d}).}
\label{f13}
\end{figure}
Similar plots of the engineering (1st P-K, nominal) stress $P_{11}$ versus the stretch $\tilde{\lambda}$ for vol-iso material models \#1--4 and \#5--8 are shown in Figs.~\ref{f14} and~\ref{f15}, respectively.\footnote{To overcome the difficulties in deriving the dependencies $\sigma_{11}(\tilde{\lambda})$ and $P_{11}(\tilde{\lambda})$ for vol-iso material model \#1 using the Wolfram Mathematica software, we approximated the function $\ln J$ by the function $6(J^{1/12}-J^{-1/12})$ (i.e., we used the  value $q=1/12$ in \eqref{87}).}
\begin{figure}
\begin{center}
\includegraphics{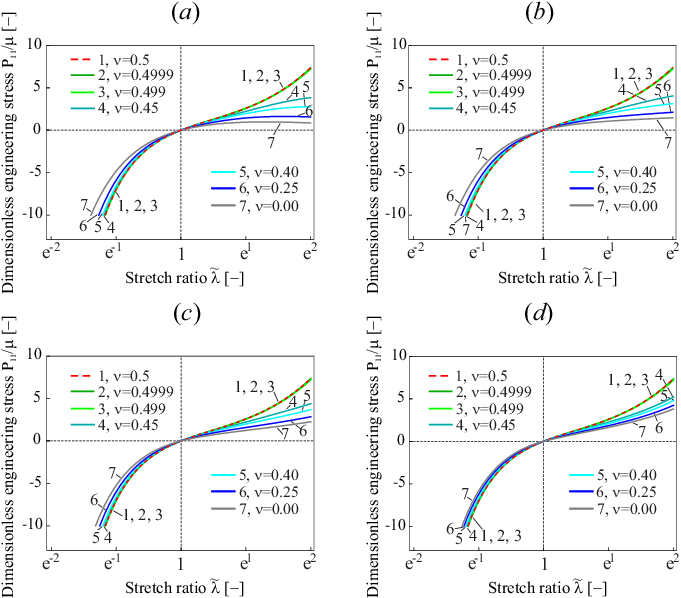}
\end{center}
\caption{Plots of $P_{11}$ versus $\tilde{\lambda}$ in the UL problem for vol-iso models \#1 (\emph{a}), \#2 (\emph{b}), \#3 (\emph{c}), and \#4 (\emph{d}).}
\label{f14}
\end{figure}
\begin{figure}
\begin{center}
\includegraphics{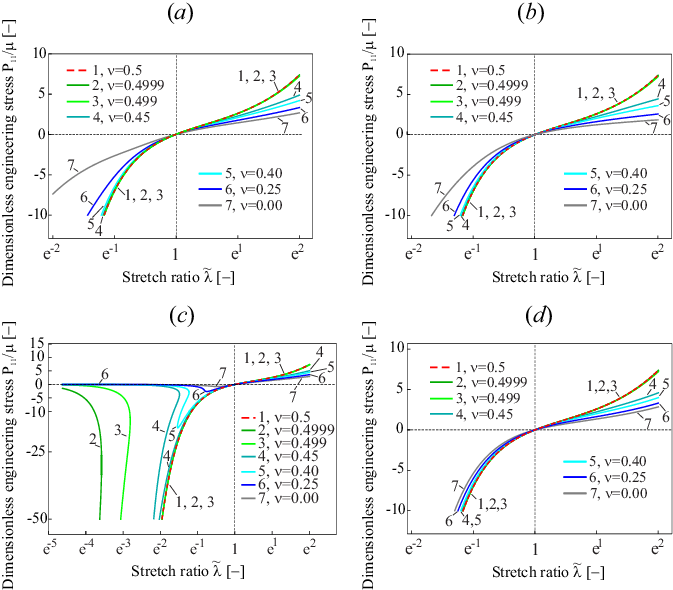}
\end{center}
\caption{Plots of $P_{11}$ versus $\tilde{\lambda}$ in the UL problem for vol-iso models \#5 (\emph{a}), \#6 (\emph{b}), \#7 (\emph{c}), and \#8 (\emph{d}).}
\label{f15}
\end{figure}
Note that the plots of the stress $\sigma_{11}$ versus the stretch $\tilde{\lambda}$ for vol-iso model \#3 in Fig. \ref{f12},\emph{c} agree with the plots in Fig. 10 in \cite{PenceMMS2015} for the same material model (the model of $W_b$ in terms of \cite{PenceMMS2015}). In addition, the plots $\sigma_{11}(\tilde{\lambda})$ and $P_{11}(\tilde{\lambda})$ in Figs. \ref{f13},\emph{c} and \ref{f15},\emph{c} for vol-iso material model \#7 agree with the plots in Figs. 14,\emph{a} and 5 in \cite{KossaMeccanica2023} for the same material model. We observe the non-monotonic dependencies of Cauchy stresses on stretches in Figs. \ref{f13},\emph{c} and \ref{f15},\emph{c} for this material model. This is clearly physically inadmissible as long as the elastic material is not damaged.

The limiting values of $\sigma_{11}$ and $P_{11}$ in extreme states obtained for vol-iso material models with $0\leq\nu< 0.5$ using the Wolfram Mathematica software are given in Table~\ref{t3}. It can be seen that physically reasonable limiting values exist only for vol-iso models \#3--6,8 (see Table~\ref{t2}).

The results of the solution of the UL problem for vol-iso models lead to the conclusion that physically reasonable solutions can be obtained only using volumetric functions \#3,4,8.

\subsection{Equibiaxial loading in plane stress}
\label{sec:6-3}

The dependencies of stresses and the unknown out-of-plane principal stretch versus prescribed in-plane principal stretches obtained by solving the problem of equibiaxial loading in plane stress for the incompressible isotropic neo-Hookean material model and compressible mixed and vol-iso neo-Hookean models are presented in Sections \ref{sec:6-3-1}, \ref{sec:6-3-2}, and \ref{sec:6-3-3}, respectively.

\subsubsection{Incompressible isotropic neo-Hookean material}
\label{sec:6-3-1}

Setting $J=1$, from \eqref{158} we obtain
\begin{equation}\label{185}
  \lambda_T= \tilde{\lambda}^{-2}.
\end{equation}
In view of \eqref{55} and \eqref{159}, the components of the Cauchy stress tensor can be written as
\begin{equation}\label{186}
  \sigma_{11} = \sigma_{22} = \mu\,(\tilde{\lambda}^2-1)-p, \quad\quad \sigma_{33} = \mu\,(\lambda_T^2-1)-p.
\end{equation}
Determining the Lagrange multiplier $p$ from $\eqref{156}_2$ and $\eqref{186}_2$ and using \eqref{185}, from $\eqref{186}_1$ we obtain
\begin{equation}\label{187}
  \sigma_{11} = \mu\,(\tilde{\lambda}^2-\tilde{\lambda}^{-4}).
\end{equation}
Using \eqref{185} and \eqref{187}, from \eqref{161} we get
\begin{equation}\label{188}
   P_{11}= \tilde{\lambda}^{-1} \sigma_{11} = \mu\,(\tilde{\lambda}-\tilde{\lambda}^{-5}).
\end{equation}

Using expressions \eqref{185}, \eqref{187}, and \eqref{188}, we obtain the limiting values of $\lambda_T$, $\sigma_{11}$, and $P_{11}$ in extreme states, which coincide with the corresponding values for the UL problem (see Table~\ref{t2}).

\subsubsection{Compressible isotropic mixed neo-Hookean material models}
\label{sec:6-3-2}

In view of \eqref{65} and \eqref{159}, the components of the Cauchy stress tensor can be written as
\begin{equation}\label{189}
  \sigma_{11} = \sigma_{22} =  \lambda\, h^{\prime}(J) + \frac{\mu}{J} (\tilde{\lambda}^2-1), \quad\quad \sigma_{33} = \lambda\, h^{\prime}(J) + \frac{\mu}{J} (\lambda_T^2-1).
\end{equation}
Using $\eqref{156}_2$ and $\eqref{189}_2$, we obtain the implicit nonlinear dependence of $\lambda_T$ on $\tilde{\lambda}$ in the general case:
\begin{equation}\label{190}
  \lambda\, h^{\prime}(\tilde{\lambda}^2\lambda_T) + \frac{\mu}{\tilde{\lambda}^2} (\lambda_T-\lambda_T^{-1})=0.
\end{equation}

As in Section \ref{sec:6-2-2}, we first consider the value $\nu=0$ for Poisson's ratio. Since $\lambda=0$ for this value of $\nu$, from \eqref{190} we obtain equality \eqref{175}, which does not depend on the choice of the volumetric function. Using \eqref{158}, \eqref{161}, and $\eqref{189}_1$, we get
\begin{equation*}
   \sigma_{11}= \sigma_{22}=\mu\,(1-\tilde{\lambda}^{-2}),\quad\quad P_{11}= P_{22}= \mu\,(\tilde{\lambda}-\tilde{\lambda}^{-1}).
\end{equation*}
For the remaining values of $\nu$ from the interval $0<\nu<0.5$, the value of $\lambda_T$ should be determined from the nonlinear equation \eqref{190}. In the particular case of mixed model \#7, using the function $h^{\prime}(J)$ of the form $\eqref{93}_2$ in \eqref{190}, we obtain the solution of Eq. \eqref{190} in closed form:
\begin{equation*}
  \lambda_T=\left[\mu/(\lambda\tilde{\lambda}^4+\mu)\right]^{1/2}.
\end{equation*}
For the volumetric functions $h^{\prime}(J)$ considered in this book, the dependence $\lambda_T(\tilde{\lambda})$ is derived from \eqref{190} using the Wolfram Mathematica software. Substitution of the obtained dependence into $\eqref{189}_1$ taking into account expression \eqref{158} yields the dependence $\sigma_{11}(\tilde{\lambda})$. The obtained dependencies $\lambda_T(\tilde{\lambda})$ and $\sigma_{11}(\tilde{\lambda})$ are used to derive expressions of $P_{11}(\tilde{\lambda})=P_{22}(\tilde{\lambda})$ from \eqref{161}.

Since the dependencies of out-of-plane principal stretch and stresses on prescribed in-plane principal stretches obtained in the ELP and ULP problems are qualitatively similar to the corresponding dependencies obtained in the UL problem and presented in Section \ref{sec:6-2}, in this and subsequent sections, we restrict ourselves to testing models \#1,4,7 for the most widely used volumetric functions and \#8 for the new one. Plots of $\lambda_T$ versus $\tilde{\lambda}$ are given in Fig.~\ref{f16}, and plots $\sigma_{11}$ and $P_{11}$ versus $\tilde{\lambda}$ are given in Figs.~\ref{f17} and \ref{f18}, respectively.
\begin{figure}
\begin{center}
\includegraphics{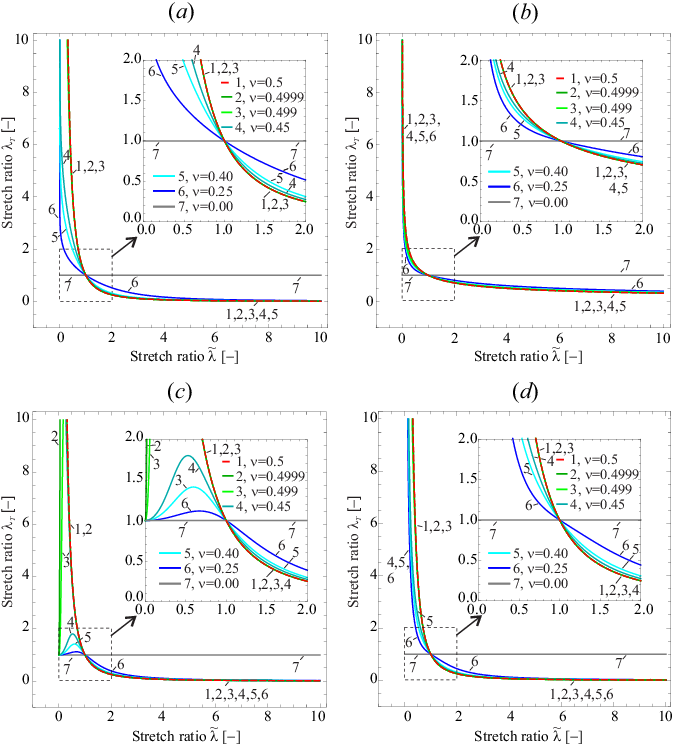}
\end{center}
\caption{Plots of $\lambda_T$ versus $\tilde{\lambda}$ in the ELP problem for mixed models \#1 (\emph{a}), \#4 (\emph{b}), \#7 (\emph{c}), and \#8 (\emph{d}).}
\label{f16}
\end{figure}
\begin{figure}
\begin{center}
\includegraphics{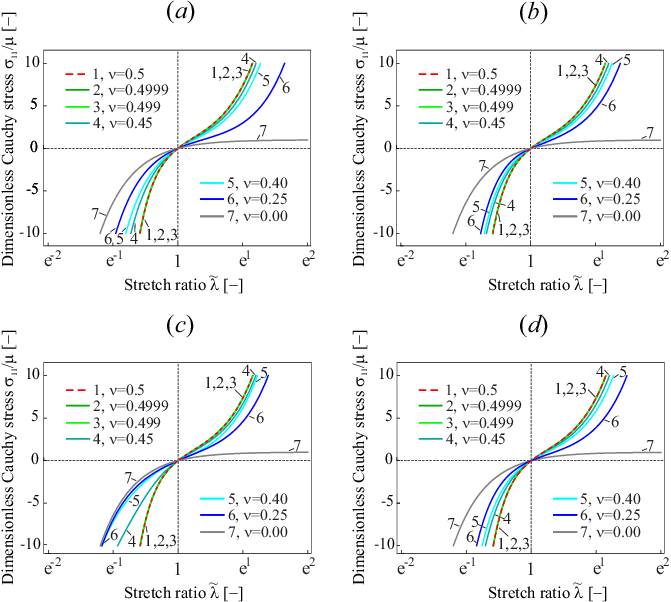}
\end{center}
\caption{Plots of $\sigma_{11}$ versus $\tilde{\lambda}$ in the ELP problem for mixed models \#1 (\emph{a}), \#4 (\emph{b}), \#7 (\emph{c}), and \#8 (\emph{d}).}
\label{f17}
\end{figure}
\begin{figure}
\begin{center}
\includegraphics{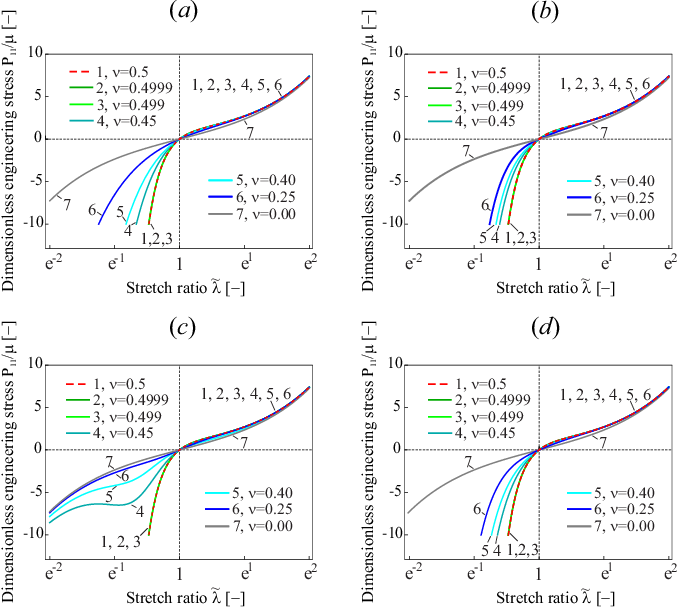}
\end{center}
\caption{Plots of $P_{11}$ versus $\tilde{\lambda}$ in the ELP problem for mixed models \#1 (\emph{a}), \#4 (\emph{b}), \#7 (\emph{c}), and \#8 (\emph{d}).}
\label{f18}
\end{figure}
We observe the non-monotonic physically inadmissible dependencies of Cauchy stresses on stretches in Fig. \ref{f18},\emph{c} for material model \#7.

The limiting values of $\lambda_T$, $\sigma_{11}$, and $P_{11}$ in extreme states are presented in Table~\ref{t4}.
\begin{table}
\caption{Limiting values of $\lambda_T$, $\sigma_{11}$, and $P_{11}$ in extreme states where $\tilde{\lambda}\rightarrow 0$ and $\tilde{\lambda}\rightarrow \infty$ in the solution of the ELP problem for compressible isotropic material models with $0\leq\nu< 0.5$}
\label{t4}
\begin{tabular}{llllll}
\hline\noalign{\smallskip}
Model    &                                & \multicolumn{2}{c}{Mixed models}                                    & \multicolumn{2}{c}{Vol-iso models} \\
ID       & Quantity${}^{\flat}$           & $\tilde{\lambda}\rightarrow 0$ & $\tilde{\lambda}\rightarrow \infty$ & $\tilde{\lambda}\rightarrow 0$ & $\tilde{\lambda}\rightarrow \infty$ \\
\noalign{\smallskip}\hline\noalign{\smallskip}
         & $\lambda_T(\tilde{\lambda})$   & $\textcolor{green}{+\infty}$   &  $\textcolor{green}{0}$             &
                                           \textcolor{red}{0}              &  $\textcolor{red}{+\infty}$         \\
   1     & $\sigma_{11}(\tilde{\lambda})$ & $\textcolor{green}{-\infty}$   &  $\textcolor{green}{+\infty}$       &
                                            $\textcolor{green}{-\infty}$   &  \textcolor{red}{0}                 \\
         & $P_{11}(\tilde{\lambda})$      & $\textcolor{green}{-\infty}$   &  $\textcolor{green}{+\infty}$       &
                                            $\textcolor{green}{-\infty}$   &  \textcolor{red}{0}                 \\
\noalign{\smallskip}\hline\noalign{\smallskip}
         & $\lambda_T(\tilde{\lambda})$   & $\textcolor{green}{+\infty}$   &  $\textcolor{green}{0}$             &
                                            $\textcolor{green}{+\infty}$   &  $\textcolor{green}{0}$            \\
   4     & $\sigma_{11}(\tilde{\lambda})$ & $\textcolor{green}{-\infty}$   &  $\textcolor{green}{+\infty}$       &
                                            $\textcolor{green}{-\infty}$   &  $\textcolor{green}{+\infty}$       \\
         & $P_{11}(\tilde{\lambda})$      & $\textcolor{green}{-\infty}$   &  $\textcolor{green}{+\infty}$       &
                                            $\textcolor{green}{-\infty}$   &  $\textcolor{green}{+\infty}$       \\
\noalign{\smallskip}\hline\noalign{\smallskip}
         & $\lambda_T(\tilde{\lambda})$   & \textcolor{red}{1}             &  $\textcolor{green}{0}$             &
                                            \textcolor{red}{0}             &  $\textcolor{green}{0}$             \\
   7     & $\sigma_{11}(\tilde{\lambda})$ & $\textcolor{green}{-\infty}$   &  $\textcolor{green}{+\infty}$       &
                                            $\textcolor{red}{-3K/2}$       &  $\textcolor{green}{+\infty}$       \\
         & $P_{11}(\tilde{\lambda})$      & $\textcolor{green}{-\infty}$   &  $\textcolor{green}{+\infty}$       &
                                            \textcolor{red}{0}             &  $\textcolor{green}{+\infty}$       \\
\noalign{\smallskip}\hline\noalign{\smallskip}
         & $\lambda_T(\tilde{\lambda})$   & $\textcolor{green}{+\infty}$   &  $\textcolor{green}{0}$             &
                                            $\textcolor{green}{+\infty}$   &  $\textcolor{green}{0}$            \\
   8     & $\sigma_{11}(\tilde{\lambda})$ & $\textcolor{green}{-\infty}$   &  $\textcolor{green}{+\infty}$       &
                                            $\textcolor{green}{-\infty}$   &  $\textcolor{green}{+\infty}$       \\
         & $P_{11}(\tilde{\lambda})$      & $\textcolor{green}{-\infty}$   &  $\textcolor{green}{+\infty}$       &
                                            $\textcolor{green}{-\infty}$   &  $\textcolor{green}{+\infty}$       \\
\noalign{\smallskip}\hline
\end{tabular}
\footnotesize
\begin{itemize}
\item[${}^{\flat}$]Green and red colors indicate physically reasonable and unreasonable values of a quantity, respectively.
\end{itemize}
\normalsize
\end{table}
These limiting values coincide with the corresponding limiting values for the same material models in Table~\ref{t3}. Note that the limiting values of these quantities for model \#7 coincide with the corresponding values for this material model in Table 2 in \cite{PenceMMS2015}.

The conclusion following from the solutions of the ELP problem using mixed models is similar to the conclusion drawn from the analysis of solutions of the UL problem at the end of Section \ref{sec:6-2-2}.

\subsubsection{Compressible isotropic vol-iso neo-Hookean material models}
\label{sec:6-3-3}

In view of \eqref{68} and \eqref{160}, the components of the Cauchy stress tensor can be written as
\begin{equation}\label{193}
  \sigma_{11} = \sigma_{22} = K\, h^{\prime}(J) + \frac{1}{3}\mu J^{-5/3} (\tilde{\lambda}^2-\lambda_T^2), \quad\quad
  \sigma_{33} = K\, h^{\prime}(J) + \frac{2}{3} \mu J^{-5/3} (\lambda_T^2-\tilde{\lambda}^2).
\end{equation}
Regardless of the choice of Poisson's ratio $\nu \in [0,0.5)$, Eqs. $\eqref{156}_2$, \eqref{158}, and $\eqref{193}_2$ lead to the following nonlinear equation for the dependence $\lambda_T(\tilde{\lambda})$:
\begin{equation*}
  K\, h^{\prime}(\tilde{\lambda}^2\lambda_T) + \frac{2}{3} \mu J^{-5/3} (\lambda_T^2-\tilde{\lambda}^2)=0.
\end{equation*}
Summing the left and right sides of the equalities in \eqref{193} and taking into account  equality $\sigma_{33}=0$, we get
\begin{equation}\label{195}
  \sigma_{11} = \sigma_{22} = \frac{3}{2} K\, h^{\prime}(\tilde{\lambda}^2\lambda_T).
\end{equation}
Substitution of $\lambda_T(\tilde{\lambda})$ into the right-hand side of \eqref{195} leads to the dependencies $\sigma_{11}(\tilde{\lambda})=\sigma_{22}(\tilde{\lambda})$. The dependencies $P_{11}(\tilde{\lambda})=P_{22}(\tilde{\lambda})$ are obtained from \eqref{161} using the dependencies $\lambda_T(\tilde{\lambda})$ and $\sigma_{11}(\tilde{\lambda})=\sigma_{22}(\tilde{\lambda})$.

Plots of $\lambda_T$ versus $\tilde{\lambda}$ are shown in Fig.~\ref{f19}, and plots of $\sigma_{11}$ and $P_{11}$ versus $\tilde{\lambda}$ are given in Figs.~\ref{f20} and \ref{f21}, respectively.
\begin{figure}
\begin{center}
\includegraphics{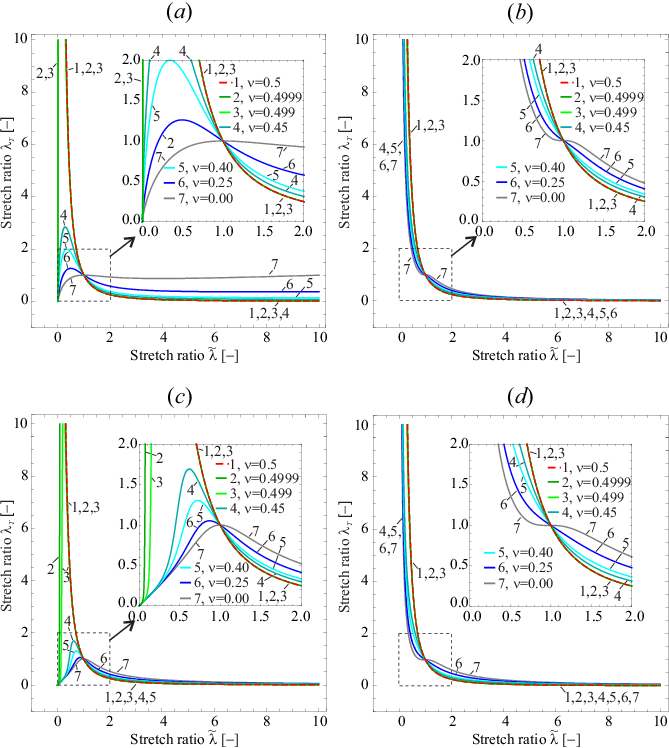}
\end{center}
\caption{Plots of $\lambda_T$ versus $\tilde{\lambda}$ in the ELP problem for vol-iso models \#1 (\emph{a}), \#4 (\emph{b}), \#7 (\emph{c}), and \#8 (\emph{d}).}
\label{f19}
\end{figure}
\begin{figure}
\begin{center}
\includegraphics{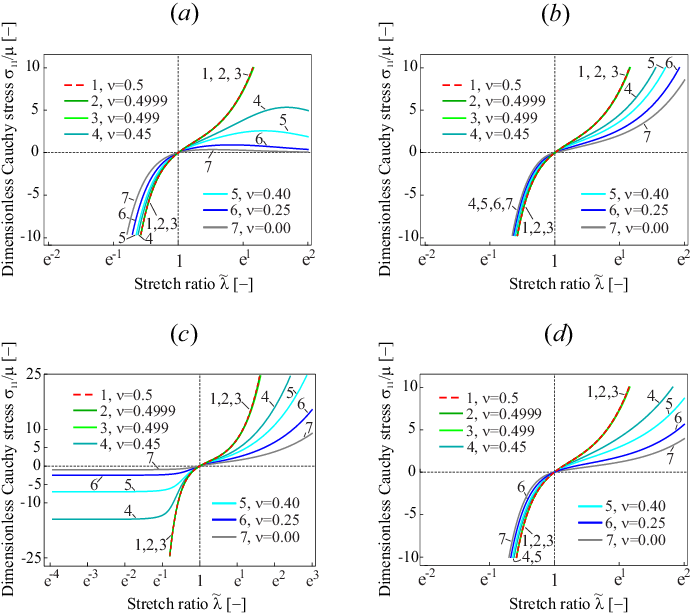}
\end{center}
\caption{Plots of $\sigma_{11}$ versus $\tilde{\lambda}$ in the ELP problem for vol-iso models \#1 (\emph{a}), \#4 (\emph{b}), \#7 (\emph{c}), and \#8 (\emph{d}).}
\label{f20}
\end{figure}
\begin{figure}
\begin{center}
\includegraphics{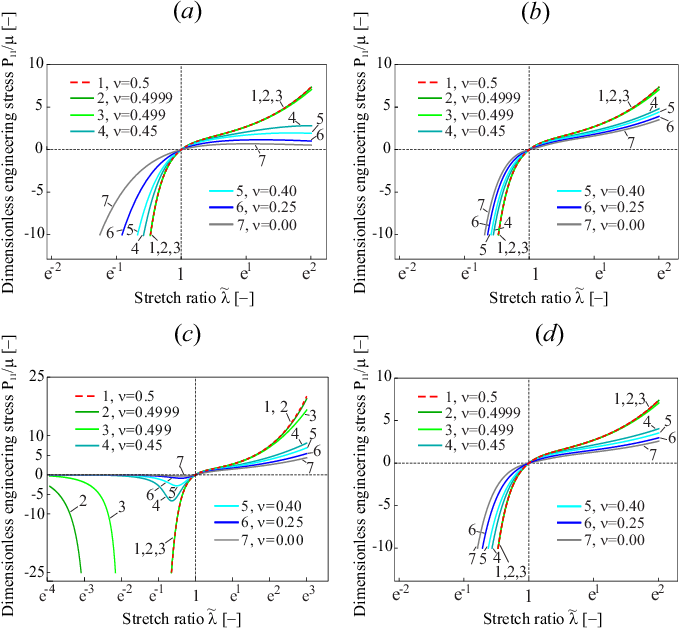}
\end{center}
\caption{Plots of $P_{11}$ versus $\tilde{\lambda}$ in the ELP problem for vol-iso models \#1 (\emph{a}), \#4 (\emph{b}), \#7 (\emph{c}), and \#8 (\emph{d}).}
\label{f21}
\end{figure}
Note that the plots of $\lambda_T(\tilde{\lambda})$ in Fig. \ref{f19},\emph{c}, the plots of $\sigma_{11}(\tilde{\lambda})$ in Fig. \ref{f20},\emph{c}, and the plots of $P_{11}(\tilde{\lambda})$ in Fig. \ref{f21},\emph{c} for vol-iso material model \#7 agree with the plots in Figs. 6, 14,\emph{b}, and 7,\emph{a} in \cite{KossaMeccanica2023} for the same material model. We observe the non-monotonic physically inadmissible dependencies of Cauchy stresses on stretches in Fig. \ref{f21},\emph{c} for material model \#7.

The limiting values of $\lambda_T$, $\sigma_{11}$, and $P_{11}$ in extreme states are presented in Table~\ref{t4}. These limiting values are in qualitative agreement with the limiting values for the same material models presented in Table~\ref{t3}. Note that the limiting values of these quantities for model \#3 are presented in Table 2 in \cite{PenceMMS2015}. The limiting values of these quantities for this model, though not presented in Table~\ref{t4}, agree with the limiting values in Table 2 in \cite{PenceMMS2015}. Note also that the limiting values of these quantities for model \#7 coincide with those for the same model presented in Fig.~11 in \cite{KossaMeccanica2023}.

The results of the solution of the ELP problem for vol-iso models lead to the conclusion that physically reasonable solutions can be obtained only using volumetric function \#4 from the Hartmann--Neff family and new one \#8.

\subsection{Uniaxial loading in plane strain}
\label{sec:6-4}

The dependencies of stresses and the unknown out-of-plane principal stretch on the prescribed longitudinal in-plane principal stretch obtained by solving the problem of uniaxial loading in plane strain for the incompressible isotropic neo-Hookean material model and compressible mixed and vol-iso isotropic neo-Hookean models are presented in Sections \ref{sec:6-4-1}, \ref{sec:6-4-2}, and \ref{sec:6-4-3}, respectively. \index{Homogeneous deformation!uniaxial loading in plane strain (ULP)}

\subsubsection{Incompressible isotropic neo-Hookean materials}
\label{sec:6-4-1}

Setting $J=1$, from \eqref{164} we obtain
\begin{equation}\label{196}
  \lambda_T= \tilde{\lambda}^{-1}.
\end{equation}
In view of \eqref{55} and \eqref{165}, the components of the Cauchy stress tensor can be written as
\begin{equation}\label{197}
  \sigma_{11} =  \mu\,(\tilde{\lambda}^2-1)-p,\quad \sigma_{22} = -p, \quad\quad
  \sigma_{33} =  \mu\,(\lambda_T^2-1)-p.
\end{equation}
Determining the Lagrange multiplier $p$ from $\eqref{162}_3$ and $\eqref{197}_3$ and  using \eqref{196}, from $\eqref{197}_{1,2}$ we obtain
\begin{equation}\label{198}
  \sigma_{11} = \mu\,(\tilde{\lambda}^2-\tilde{\lambda}^{-2}),\quad\quad \sigma_{22} = -\mu\,(\tilde{\lambda}^{-2}-1).
\end{equation}
Using \eqref{196} and \eqref{198}, from \eqref{167} we get
\begin{equation}\label{199}
   P_{11}=  \mu\,(\tilde{\lambda}-\tilde{\lambda}^{-3}),\quad\quad P_{22}= -\mu\,(\tilde{\lambda}^{-2}-1).
\end{equation}

The limiting values of $\lambda_T$, $\sigma_{11}$, $\sigma_{22}$, $P_{11}$, and $P_{22}$ in extreme states are obtained from expressions \eqref{196}, \eqref{198}, and \eqref{199} and are presented in Table \ref{t5}.
\begin{table}
\caption{Limiting values of $\lambda_T$, $\sigma_{11}$, $\sigma_{22}$, $P_{11}$, and $P_{22}$ in extreme states where $\tilde{\lambda}\rightarrow 0$ and $\tilde{\lambda}\rightarrow \infty$ in the ULP problem for the incompressible isotropic neo-Hookean material model}
\label{t5}
\begin{tabular}{lll}
\hline\noalign{\smallskip}
 Quantity                       & $\tilde{\lambda}\rightarrow 0$ & $\tilde{\lambda}\rightarrow \infty$ \\
\noalign{\smallskip}\hline\noalign{\smallskip}
 $\lambda_T(\tilde{\lambda})$   & $+\infty$                      &  0                                  \\
 $\sigma_{11}(\tilde{\lambda})$ & $-\infty$                      &  $+\infty$                          \\
 $\sigma_{22}(\tilde{\lambda})$ & $-\infty$                      &  $+\mu$                          \\
 $P_{11}(\tilde{\lambda})$      & $-\infty$                      &  $+\infty$                          \\
 $P_{22}(\tilde{\lambda})$      & $-\infty$                      &  $+\mu$                          \\
\noalign{\smallskip}\hline
\end{tabular}
\end{table}
We assume that these limiting values correspond to the physically reasonable responses for idealized hyperelastic materials.

\subsubsection{Compressible isotropic mixed neo-Hookean material models}
\label{sec:6-4-2}

In view of \eqref{65} and \eqref{165}, the components of the Cauchy stress tensor can be written as
\begin{equation}\label{200}
  \sigma_{11} = \lambda\, h^{\prime}(J) + \frac{\mu}{J} (\tilde{\lambda}^2-1),\quad\quad  \sigma_{22} = \lambda\, h^{\prime}(J), \quad\quad
  \sigma_{33} = \lambda\, h^{\prime}(J) + \frac{\mu}{J} (\lambda_T^2-1).
\end{equation}
Using $\eqref{162}_3$ and $\eqref{200}_3$, we obtain the nonlinear implicit dependence of $\lambda_T$ on $\tilde{\lambda}$ in the general case:
\begin{equation}\label{201}
  \lambda\, h^{\prime}(\tilde{\lambda}\lambda_T) + \frac{\mu}{\tilde{\lambda}} (\lambda_T-\lambda_T^{-1})=0.
\end{equation}

As in Section \ref{sec:6-2-2}, we first consider the value $\nu=0$ for Poisson's ratio. Since $\lambda=0$ for this value of $\nu$, from \eqref{201} we obtain equality \eqref{175}, which does not depend on the choice of the volumetric function. Using \eqref{164}, \eqref{167}, and $\eqref{200}_{1,2}$, we get
\begin{equation*}
   \sigma_{11}= \mu\,(\tilde{\lambda}-\tilde{\lambda}^{-1}),\quad\quad \sigma_{22}=0,\quad\quad P_{11}=\sigma_{11}= \mu\,(\tilde{\lambda}-\tilde{\lambda}^{-1}),\quad\quad  P_{22}= 0.
\end{equation*}
For the remaining values of $\nu$ from the interval $0<\nu<0.5$, the value of $\lambda_T$ should be determined from the nonlinear equation \eqref{201}. In the particular case of mixed model \#7, using the function $h^{\prime}(J)$ of the form $\eqref{93}_2$ in \eqref{201}, we obtain the solution of Eq. \eqref{201} in closed form:
\begin{equation*}
  \lambda_T=\frac{1}{2(\lambda \tilde{\lambda}^2 + \mu)}(\lambda \tilde{\lambda} + \sqrt{\lambda^2 \tilde{\lambda}^2 + 4\mu(\lambda \tilde{\lambda}^2 + \mu)}).
\end{equation*}
For all remaining volumetric functions $h^{\prime}(J)$ considered in this book, the dependence $\lambda_T(\tilde{\lambda})$ is derived from \eqref{201} using the Wolfram Mathematica software. Substitution of the obtained dependence into $\eqref{200}_{1,2}$ taking into account  expression \eqref{164} yields the dependencies $\sigma_{11}(\tilde{\lambda})$ and $\sigma_{22}(\tilde{\lambda})$. The obtained dependencies $\lambda_T(\tilde{\lambda})$, $\sigma_{11}(\tilde{\lambda})$, and $\sigma_{22}(\tilde{\lambda})$ are used to derive expressions for $P_{11}(\tilde{\lambda})$ and $P_{22}(\tilde{\lambda})$ from \eqref{167}.

Plots of $\lambda_T$ versus $\tilde{\lambda}$ are given in Fig.~\ref{f22} and plots of $\sigma_{11}$, $\sigma_{22}$, $P_{11}$, and $P_{22}$ versus $\tilde{\lambda}$ are shown in Figs.~\ref{f23}--\ref{f26}, respectively.
\begin{figure}
\begin{center}
\includegraphics{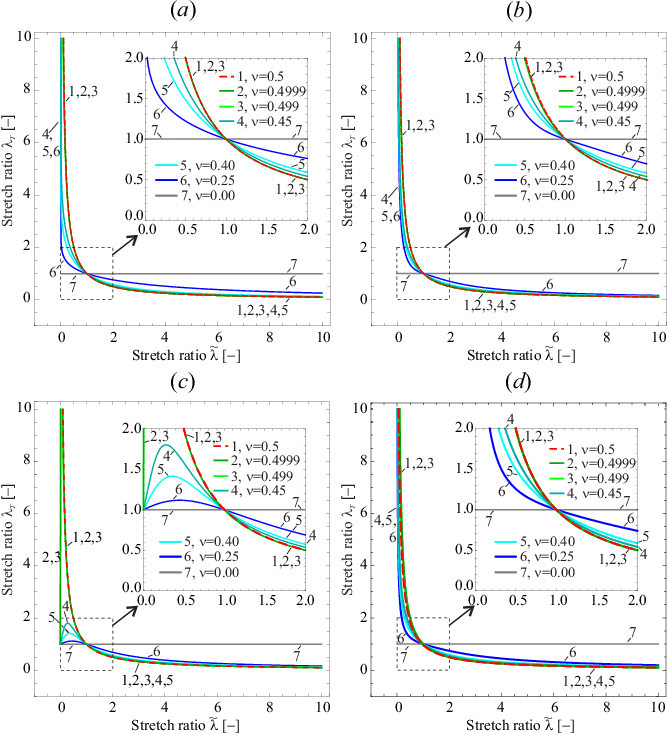}
\end{center}
\caption{Plots of $\lambda_T$ versus $\tilde{\lambda}$ in the ULP problem for mixed models \#1 (\emph{a}), \#4 (\emph{b}), \#7 (\emph{c}), and \#8 (\emph{d}).}
\label{f22}
\end{figure}
\begin{figure}
\begin{center}
\includegraphics{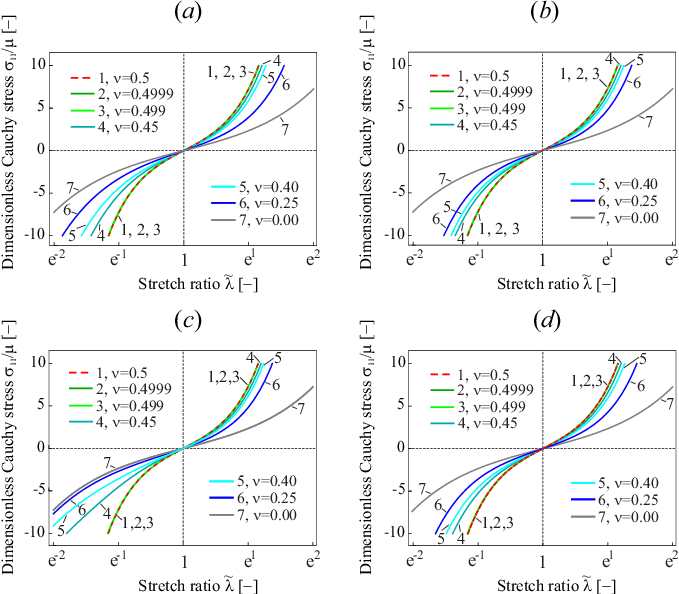}
\end{center}
\caption{Plots of $\sigma_{11}$ versus $\tilde{\lambda}$ in the ULP problem for mixed models \#1 (\emph{a}), \#4 (\emph{b}), \#7 (\emph{c}), and \#8 (\emph{d}).}
\label{f23}
\end{figure}
\begin{figure}
\begin{center}
\includegraphics{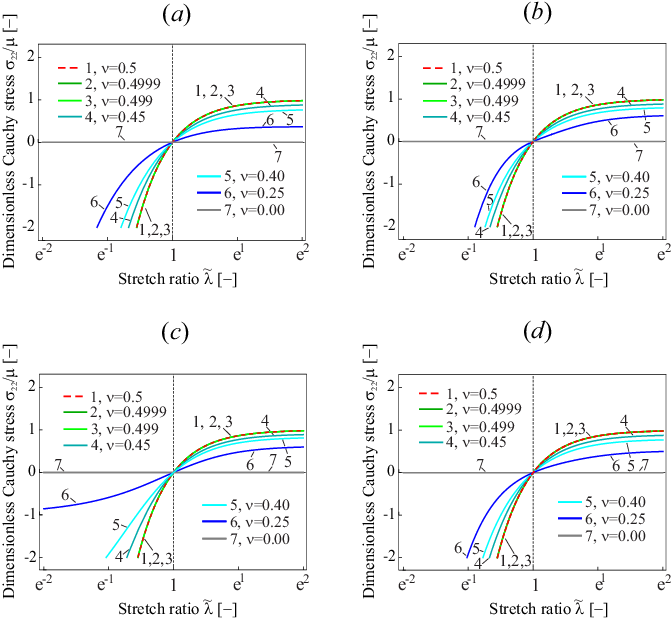}
\end{center}
\caption{Plots of $\sigma_{22}$ versus $\tilde{\lambda}$ in the ULP problem for mixed models \#1 (\emph{a}), \#4 (\emph{b}), \#7 (\emph{c}), and \#8 (\emph{d}).}
\label{f24}
\end{figure}
\begin{figure}
\begin{center}
\includegraphics{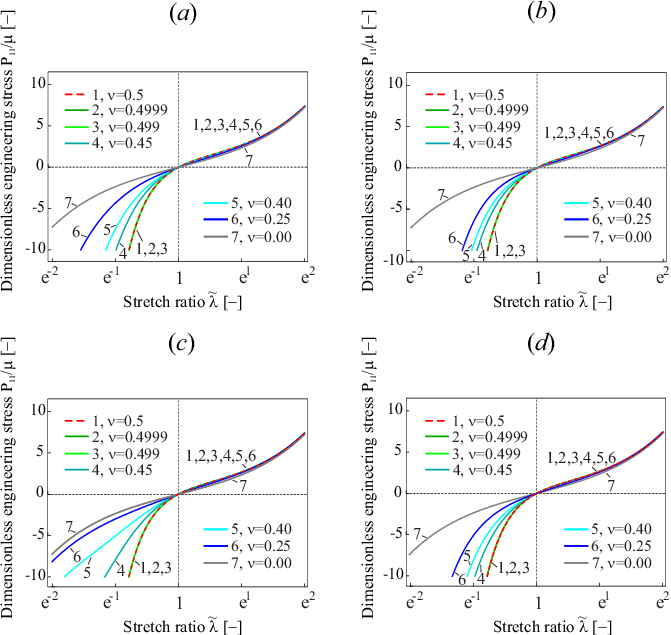}
\end{center}
\caption{Plots of $P_{11}$ versus $\tilde{\lambda}$ in the ULP problem for mixed models \#1 (\emph{a}), \#4 (\emph{b}), \#7 (\emph{c}), and \#8 (\emph{d}).}
\label{f25}
\end{figure}
\begin{figure}
\begin{center}
\includegraphics{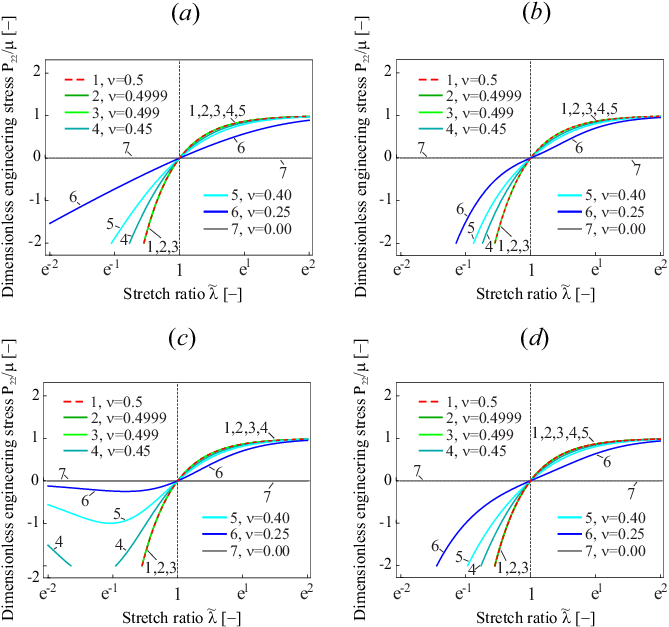}
\end{center}
\caption{Plots of $P_{22}$ versus $\tilde{\lambda}$ in the ULP problem for mixed models \#1 (\emph{a}), \#4 (\emph{b}), \#7 (\emph{c}), and \#8 (\emph{d}).}
\label{f26}
\end{figure}
We observe the non-monotonic physically inadmissible dependencies of Cauchy stresses on stretches in Fig. \ref{f26},\emph{c} for material model \#7.

The limiting values of $\lambda_T$, $\sigma_{11}$, $\sigma_{22}$, $P_{11}$, and $P_{22}$ in extreme states are presented in Table~\ref{t6}.
\begin{table}
\caption{Limiting values of $\lambda_T$, $\sigma_{11}$, $\sigma_{22}$, $P_{11}$, and $P_{22}$ in extreme states where $\tilde{\lambda}\rightarrow 0$ and $\tilde{\lambda}\rightarrow \infty$ in the solution of the ULP problem for the compressible isotropic material models with $0\leq\nu< 0.5$}
\label{t6}
\begin{tabular}{llllll}
\hline\noalign{\smallskip}
Model    &                                & \multicolumn{2}{c}{Mixed models}                                      & \multicolumn{2}{c}{Vol-iso models} \\
ID       & Quantity${}^{\flat}$           & $\tilde{\lambda}\rightarrow 0$ & $\tilde{\lambda}\rightarrow \infty$ & $\tilde{\lambda}\rightarrow 0$ & $\tilde{\lambda}\rightarrow \infty$ \\
\noalign{\smallskip}\hline\noalign{\smallskip}
         & $\lambda_T(\tilde{\lambda})$   & $\textcolor{green}{+\infty}$   &  $\textcolor{green}{0}$             &
                                            $\textcolor{red}{1/\sqrt{2}}$  &  $\textcolor{red}{+\infty}$         \\
         & $\sigma_{11}(\tilde{\lambda})$ & $\textcolor{green}{-\infty}$   &  $\textcolor{green}{+\infty}$       &
                                            $\textcolor{green}{-\infty}$   &  \textcolor{red}{0}                 \\
   1     & $\sigma_{22}(\tilde{\lambda})$ & $\textcolor{green}{-\infty}$   &  $\textcolor{green}{\ast}$          &
                                            $\textcolor{green}{-\infty}$   &  \textcolor{red}{0}                 \\
         & $P_{11}(\tilde{\lambda})$      & $\textcolor{green}{-\infty}$   &  $\textcolor{green}{+\infty}$       &
                                            $\textcolor{green}{-\infty}$   &  \textcolor{red}{0}                 \\
         & $P_{22}(\tilde{\lambda})$      & $\textcolor{green}{-\infty}$   &  $\textcolor{green}{+\mu}$          &
                                            $\textcolor{red}{\pm\infty}$   &  $\textcolor{red}{-\infty}$         \\
\noalign{\smallskip}\hline\noalign{\smallskip}
         & $\lambda_T(\tilde{\lambda})$   & $\textcolor{green}{+\infty}$   &  $\textcolor{green}{0}$             &
                                            $\textcolor{green}{+\infty}$   &  $\textcolor{green}{0}$             \\
         & $\sigma_{11}(\tilde{\lambda})$ & $\textcolor{green}{-\infty}$   &  $\textcolor{green}{+\infty}$       &
                                            $\textcolor{green}{-\infty}$   &  $\textcolor{green}{+\infty}$       \\
   4     & $\sigma_{22}(\tilde{\lambda})$ & $\textcolor{green}{-\infty}$   &  $\textcolor{green}{\ast}$          &
                                            $\textcolor{green}{-\infty}$   &  $\textcolor{green}{+\infty}$       \\
         & $P_{11}(\tilde{\lambda})$      & $\textcolor{green}{-\infty}$   &  $\textcolor{green}{+\infty}$       &
                                            $\textcolor{green}{-\infty}$   &  $\textcolor{green}{+\infty}$       \\
         & $P_{22}(\tilde{\lambda})$      & $\textcolor{green}{-\infty}$   &  $\textcolor{green}{+\mu}$          &
                                            $\textcolor{green}{-\infty}$   &  $\textcolor{red}{+\infty}$         \\
\noalign{\smallskip}\hline\noalign{\smallskip}
         & $\lambda_T(\tilde{\lambda})$   & \textcolor{red}{1}             &  $\textcolor{green}{0}$             &
                                            $\textcolor{red}{1/\sqrt{2}}$  &  $\textcolor{green}{0}$             \\
         & $\sigma_{11}(\tilde{\lambda})$ & $\textcolor{green}{-\infty}$   &  $\textcolor{green}{+\infty}$       &
                                            $\textcolor{green}{-\infty}$   &  $\textcolor{green}{+\infty}$       \\
   7     & $\sigma_{22}(\tilde{\lambda})$ & $\textcolor{red}{-\lambda}$    &  $\textcolor{green}{\ast}$          &
                                            $\textcolor{red}{+\infty}$     &  $\textcolor{red}{+\infty}$         \\
         & $P_{11}(\tilde{\lambda})$      & $\textcolor{green}{-\infty}$   &  $\textcolor{green}{+\infty}$       &
                                            $\textcolor{green}{-\infty}$   &  $\textcolor{green}{+\infty}$       \\
         & $P_{22}(\tilde{\lambda})$      & $\textcolor{red}{0}$           &  $\textcolor{green}{+\mu}$          &
                                            $\textcolor{red}{\infty}$      &  $\textcolor{red}{+\infty}$         \\
\noalign{\smallskip}\hline\noalign{\smallskip}
         & $\lambda_T(\tilde{\lambda})$   & $\textcolor{green}{+\infty}$   &  $\textcolor{green}{0}$             &
                                            $\textcolor{green}{+\infty}$   &  $\textcolor{green}{0}$             \\
         & $\sigma_{11}(\tilde{\lambda})$ & $\textcolor{green}{-\infty}$   &  $\textcolor{green}{+\infty}$       &
                                            $\textcolor{green}{-\infty}$   &  $\textcolor{green}{+\infty}$       \\
   8     & $\sigma_{22}(\tilde{\lambda})$ & $\textcolor{green}{-\infty}$   &  $\textcolor{green}{\ast}$          &
                                            $\textcolor{green}{-\infty}$   &  $\textcolor{green}{+\infty}$       \\
         & $P_{11}(\tilde{\lambda})$      & $\textcolor{green}{-\infty}$   &  $\textcolor{green}{+\infty}$       &
                                            $\textcolor{green}{-\infty}$   &  $\textcolor{green}{+\infty}$       \\
         & $P_{22}(\tilde{\lambda})$      & $\textcolor{green}{-\infty}$   &  $\textcolor{green}{+\mu}$          &
                                            $\textcolor{green}{-\infty}$   &  $\textcolor{red}{+\infty}$         \\
\noalign{\smallskip}\hline
\end{tabular}
\footnotesize
\begin{itemize}
\item[${}^{\flat}$]Green and red colors indicate physically reasonable and unreasonable values of a quantity, a symbol   $\pm\infty$ denotes limiting values $+\infty$ or $-\infty$ for different values of Poisson's ratio.
\end{itemize}
\normalsize
\end{table}
These limiting values mostly coincide with the limiting values for the same material models presented in Table~\ref{t5}.

The conclusion following from the solutions of the ULP problem for mixed models is similar to the conclusion drawn from the analysis of solutions of the UL problem at the end of Section \ref{sec:6-2-2}.

\subsubsection{Compressible isotropic vol-iso neo-Hookean material models}
\label{sec:6-4-3}

In view of \eqref{68} and \eqref{166}, the components of the Cauchy stress tensor can be written as
\begin{align}
  \sigma_{11} &= K\, h^{\prime}(J) + \frac{1}{3}\mu\, J^{-5/3} (2\tilde{\lambda}^2 -1 -\lambda_T^2), \label{204} \\
  \sigma_{22} &= K\, h^{\prime}(J) + \frac{1}{3}\mu\, J^{-5/3} (-\tilde{\lambda}^2 + 2 -\lambda_T^2), \notag \\
  \sigma_{33} &= K\, h^{\prime}(J) + \frac{1}{3}\mu\, J^{-5/3} (2\lambda_T^2 -1 -\tilde{\lambda}^2). \notag
\end{align}
Regardless of the choice of Poisson's ratio $\nu \in [0,0.5)$, Eqs. $\eqref{162}_3$, \eqref{164}, and $\eqref{204}_3$ lead to the following nonlinear equation for the dependence   $\lambda_T(\tilde{\lambda})$:
\begin{equation*}
  K\, h^{\prime}(J) + \frac{1}{3}\mu\, J^{-5/3} (2\lambda_T^2-\tilde{\lambda}^2)=0.
\end{equation*}
Substitution of the dependencies $\lambda_T(\tilde{\lambda})$ into the right-hand side of $\eqref{204}_{1,2}$ yields the dependencies $\sigma_{11}(\tilde{\lambda})$ and $\sigma_{22}(\tilde{\lambda})$. The dependencies $P_{11}(\tilde{\lambda})$ and $P_{22}(\tilde{\lambda})$ are obtained from \eqref{167} using the dependencies $\lambda_T(\tilde{\lambda})$, $\sigma_{11}(\tilde{\lambda})$, and $\sigma_{22}(\tilde{\lambda})$.

Plots of $\lambda_T$ versus $\tilde{\lambda}$ are shown in Fig.~\ref{f27}, and plots of $\sigma_{11}$, $\sigma_{22}$, $P_{11}$, and $P_{22}$ versus $\tilde{\lambda}$ in Figs.~\ref{f28}-\ref{f31}, respectively.
\begin{figure}
\begin{center}
\includegraphics{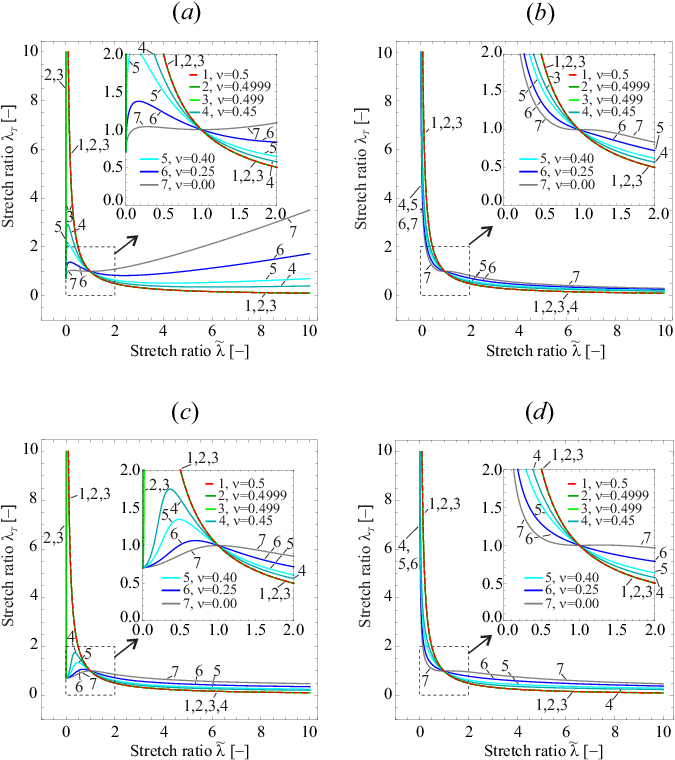}
\end{center}
\caption{Plots of $\lambda_T$ versus $\tilde{\lambda}$ in the ULP problem for vol-iso models \#1 (\emph{a}), \#4 (\emph{b}), \#7 (\emph{c}), and \#8 (\emph{d}).}
\label{f27}
\end{figure}
\begin{figure}
\begin{center}
\includegraphics{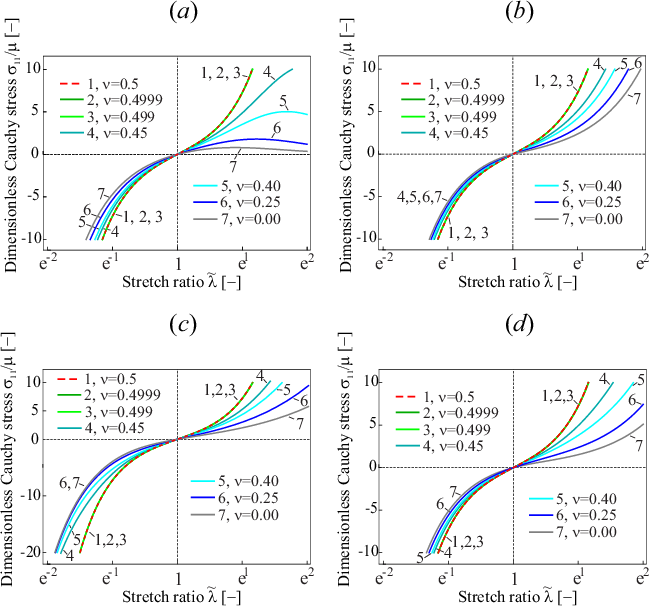}
\end{center}
\caption{Plots of $\sigma_{11}$ versus $\tilde{\lambda}$ in the ULP problem for vol-iso models \#1 (\emph{a}), \#4 (\emph{b}), \#7 (\emph{c}), and \#8 (\emph{d}).}
\label{f28}
\end{figure}
\begin{figure}
\begin{center}
\includegraphics{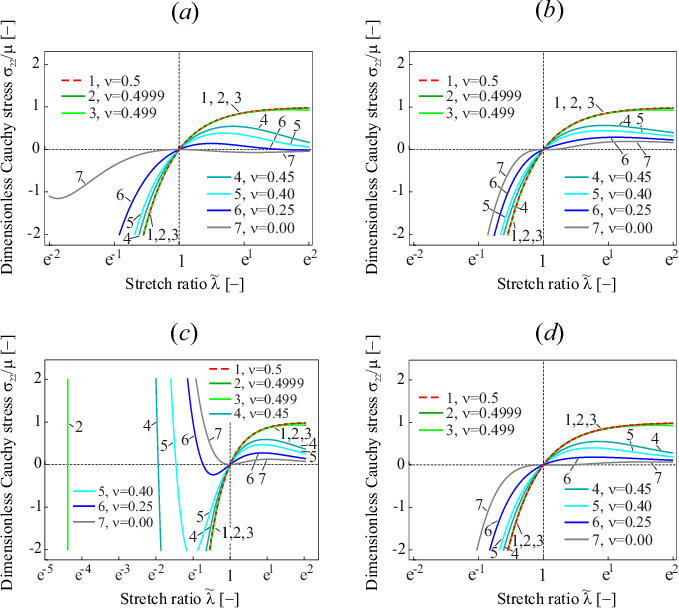}
\end{center}
\caption{Plots of $\sigma_{22}$ versus $\tilde{\lambda}$ in the ULP problem for vol-iso models \#1 (\emph{a}), \#4 (\emph{b}), \#7 (\emph{c}), and \#8 (\emph{d}).}
\label{f29}
\end{figure}
\begin{figure}
\begin{center}
\includegraphics{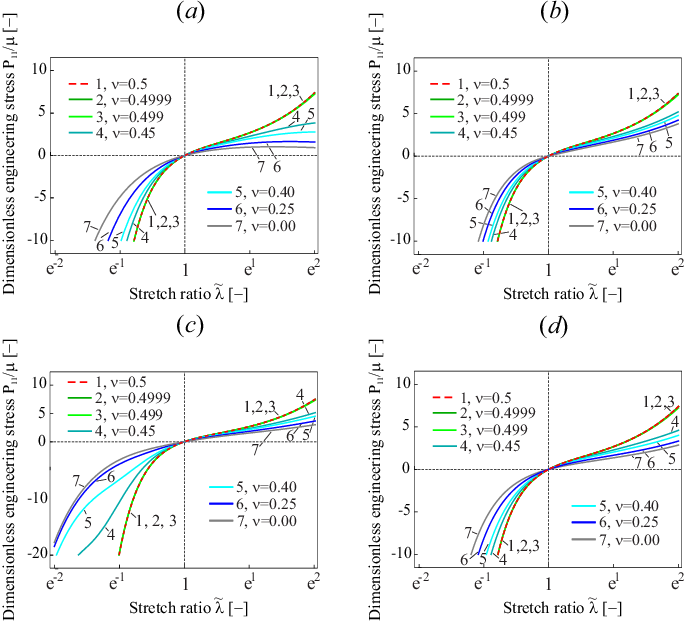}
\end{center}
\caption{Plots of $P_{11}$ versus $\tilde{\lambda}$ in the ULP problem for vol-iso models \#1 (\emph{a}), \#4 (\emph{b}), \#7 (\emph{c}), and \#8 (\emph{d}).}
\label{f30}
\end{figure}
\begin{figure}
\begin{center}
\includegraphics{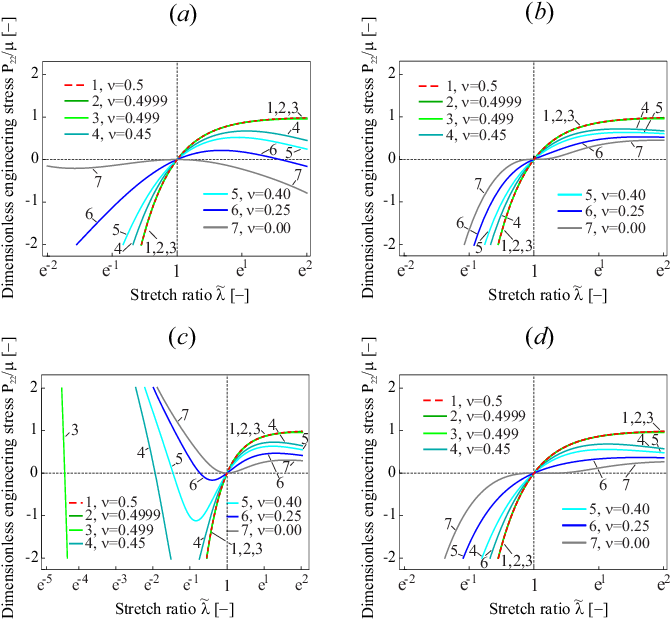}
\end{center}
\caption{Plots of $P_{22}$ versus $\tilde{\lambda}$ in the ULP problem for vol-iso models \#1 (\emph{a}), \#4 (\emph{b}), \#7 (\emph{c}), and \#8 (\emph{d}).}
\label{f31}
\end{figure}
Note that the plots of $\lambda_T(\tilde{\lambda})$ in Fig. \ref{f27},\emph{c}, the plots of $\sigma_{11}(\tilde{\lambda})$ in Fig. \ref{f28},\emph{c}, and the plots of $P_{11}(\tilde{\lambda})$ in Fig. \ref{f30},\emph{c} for vol-iso material model \#7 agree with the plots in Figs. 8, 14,\emph{c}, and 9,\emph{a} in \cite{KossaMeccanica2023} for the same material model. We observe the non-monotonic physically inadmissible dependencies of Cauchy stresses on stretches in Fig. \ref{f29},\emph{c} for material model \#7.

The limiting values of $\lambda_T$, $\sigma_{11}$, $\sigma_{22}$, $P_{11}$, and $P_{22}$ in extreme states are presented in Table~\ref{t6}. These limiting values are in qualitative agreement with the limiting values for the same material models presented in Table~\ref{t4}. Note that the limiting values of these quantities for model \#7 coincide with those for the same model presented in Fig.~11 in \cite{KossaMeccanica2023}.

The conclusion following from the solutions of the ULP problem using vol-iso models is similar to the conclusion drawn from the analysis of solutions of the ELP problem at the end of Section \ref{sec:6-3-3}.

\section{Concluding remarks}
\label{sec:7}

In this section we summarize and comment on all the research presented in this book. In particular, we do a comparative analysis of the performance of mixed and vol-iso material models in Section \ref{sec:7-1} and general conclusions in Section \ref{sec:7-2}.

\subsection{Comparative analysis of the performance of mixed and vol-iso material models}
\label{sec:7-1}

The analysis performed in this book leads to the following conclusions.

\begin{enumerate}
  \item From the application point of view, both mixed and vol-iso models predict close values of stress tensors and principal stretches for slightly compressible materials. However, for the models to be able to predict the dilatation $J-1$, they should be somewhat modified (see, e.g., \cite{FongTSR1975,OgdenJMPS1976}), which is beyond the scope of the present study.
  \item From the point of view of the consistency of physical intuition, in limiting states (i.e., for extreme values of stretches, very large or very small), models \#3,4,8 of both types (mixed and vol-iso) have physically reasonable responses for kinematic and static quantities. For models \#3,4 we use volumetric functions from the Hartmann--Neff family with parameter $q=2,\,5$ and for model \#8 we use new volumetric function. However, for model \#1 with the volumetric function with $q=0$ (i.e., with the function $h^{(0)}=(\ln J)^2/2$), only the mixed model has physically reasonable responses.
  \item Both mixed and vol-iso models \#1--6,8 satisfy the Hill inequality, but model \#7 do not satisfy this inequality for some values of the volume ratio $J$. In addition, all mixed and vol-iso models do not satisfy the CSP.
  \item The constitutive relations for mixed models are more simpler than the constitutive relations for vol-iso models. This is especially true for rate formulations for these models (cf., expression \eqref{115} for mixed models and expression \eqref{122} for vol-iso ones). In this book, we used rate formulations of constitutive relations to answer the question of whether the material models considered here satisfy the Hill and corotational stability postulates. However, rate formulations of constitutive relations are also required to implement any material models in FE systems. Our next goal is to obtain explicit expressions for tangent stiffness tensors for the material models under consideration.
\end{enumerate}

The rate formulations of constitutive relations \eqref{115} for mixed models can be rewritten as
\begin{equation}\label{7-1}
  \frac{\text{D}^{\text{ZJ}}}{\text{D}t}[\boldsymbol{\tau}]= \lambda\, \chi(J)J\,\text{tr}\,\mathbf{d}\, \mathbf{I} + 2(\mu -\lambda\, \ln J)\mathbf{d} + \mathbf{d}\cdot \boldsymbol{\tau} + \boldsymbol{\tau}\cdot \mathbf{d}.
\end{equation}
Similarly, the rate formulations of constitutive relations for vol-iso models \eqref{122} can be rewritten as \index{Rate constitutive relations!for vol-iso models}
\begin{align}\label{7-2}
  \frac{\text{D}^{\text{ZJ}}}{\text{D}t}[\boldsymbol{\tau}]= [K\, \chi(J)J &+ \frac{2}{9}\mu\, \text{tr}\,\mathbf{c}\,J^{-2/3}]\,\text{tr}\,\mathbf{d}\, \mathbf{I} +
[\frac{2}{3}\mu\, \text{tr}\,\mathbf{c}\,J^{-2/3} - 2 K\, J h^{\prime}(J)]\mathbf{d} \\
 & - \frac{2}{3}\mu\, J^{-2/3}[\text{tr}\,\mathbf{d}\, \mathbf{c} + (\mathbf{c}:\mathbf{d})\mathbf{I}] + \mathbf{d}\cdot \boldsymbol{\tau} + \boldsymbol{\tau}\cdot \mathbf{d}. \notag
\end{align}

Alternative forms of the rate constitutive relations \eqref{7-1} and \eqref{7-2} can be obtained using the upper Oldroyd stress rate $\frac{\text{D}^{\overline{\text{Old}}}}{\text{D}t}[\boldsymbol{\tau}]$ instead of the Zaremba--Jaumann stress rate $\frac{\text{D}^{\text{ZJ}}}{\text{D}t}[\boldsymbol{\tau}]$. Based on the relationship between objective tensor rates (see, e.g., Eq. $(25)_2$ in \cite{KorobeynikovAAM2020} and Eq. (47) in \cite{FedericoMMS2025}))
\begin{equation}\label{7-3}
  \frac{\text{D}^{\overline{\text{Old}}}}{\text{D}t}[\boldsymbol{\tau}]=\frac{\text{D}^{\text{ZJ}}}{\text{D}t}[\boldsymbol{\tau}] -  \mathbf{d}\cdot \boldsymbol{\tau} - \boldsymbol{\tau}\cdot \mathbf{d},
\end{equation}
an alternative representation of the rate constitutive relations for mixed models can be obtained from \eqref{7-1}:
\begin{equation}\label{7-4}
 \frac{\text{D}^{\overline{\text{Old}}}}{\text{D}t}[\boldsymbol{\tau}]= \lambda\, \chi(J)J\,\text{tr}\,\mathbf{d}\, \mathbf{I} + 2(\mu -\lambda \ln J)\mathbf{d},
\end{equation}
and that for vol-iso models can be obtained from \eqref{7-2}:
\begin{align}\label{7-5}
  \frac{\text{D}^{\overline{\text{Old}}}}{\text{D}t}[\boldsymbol{\tau}]= [K\, \chi(J)J + \frac{2}{9}\mu\, \text{tr}\,\mathbf{c}\, J^{-2/3}]\,\text{tr}\,\mathbf{d}\, \mathbf{I} & + [\frac{2}{3}\mu\, \text{tr}\,\mathbf{c}\,J^{-2/3} - 2 K\, J h^{\prime}(J)]\,\mathbf{d}\\
& - \frac{2}{3}\mu\, J^{-2/3}[\text{tr}\,\mathbf{d}\, \mathbf{c} + (\mathbf{c}:\mathbf{d})\mathbf{I}]. \notag
\end{align}
In particular, for mixed model \#1 (the \emph{Simo--Pister} \cite{SimoCMAME1984} \emph{hyperelastic model}), the equality $\chi(J)J=1$ holds and expression \eqref{7-4} reduces to the expression
\begin{equation*}
  \frac{\text{D}^{\overline{\text{Old}}}}{\text{D}t}[\boldsymbol{\tau}]= 2(\mu -\lambda \ln J)\mathbf{d} + \lambda \text{tr}\,\mathbf{d}\, \mathbf{I},
\end{equation*}
which represents the rate constitutive relations for the one-parameter (with parameter $n=2$) family of Hooke-like isotropic hyper-/hypo-elastic material models (see Eq. (60) in  \cite{KorobeynikovAAM2023} with the identification $\boldsymbol{\tau}^{\nabla\,(2)}=\frac{\text{D}^{\overline{\text{Old}}}}{\text{D}t}[\boldsymbol{\tau}]$).

Typically, Eulerian formulations of constitutive relations are implemented in FE systems by employing the updated Lagrangian approach (see, e.g., \cite{Bathe1996}) and using the fourth-order tangent stiffness tensors in two alternative forms of constitutive relations (see, e.g., \cite{Korobeynikov2000,KorobeynikovAAM2020,KorobeynikovAAM2023})\footnote{In particular, the commercial Abaqus system uses rate constitutive relations of the form $\eqref{7-7}_1$ \cite{JiJAM2013,NguyenZAMP2016}, and the commercial MSC.Marc system uses rate constitutive relations of the form $\eqref{7-7}_2$ (cf., \cite{MarcA2015}). Due to \eqref{36}, the positive definiteness of $\mathbb{C}^{\text{BH}}$ is equivalent to the Hill stability condition (see Chapter \ref{sec:5}), so Abaqus uses the positive definiteness of $\mathbb{C}^{\text{BH}}$ as material stability condition.}
\begin{equation}\label{7-7}
    \frac{\text{D}^{\text{BH}}}{\text{D}t}[\boldsymbol{\sigma}] = \mathbb{C}^{\text{BH}}:\mathbf{d},\quad\quad \frac{\text{D}^{\text{Tr}}}{\text{D}t}[\boldsymbol{\sigma}] = \mathbb{C}^{\text{Tr}}:\mathbf{d}.
\end{equation}
Based on relations \eqref{36} between the stress rates, the tangent stiffness tensor for mixed models can be obtained from \eqref{7-4}:
\begin{equation}\label{7-8}
 \mathbb{C}^{\text{Tr}}_{\text{mixed}}= \frac{2}{J}(\mu-\lambda\ln\,J) \mathbf{I}\!\overset{\text{sym}}{\otimes}\!\mathbf{I} + \lambda\, \chi(J)\mathbf{I}\otimes\mathbf{I},
\end{equation}
and that for vol-iso models can be obtained from \eqref{7-5}:
\begin{align}\label{7-9}
  \mathbb{C}^{\text{Tr}}_{\text{vol-iso}}= [K\, \chi(J)J + \frac{2}{9}\mu\, \text{tr}\,\mathbf{c}J^{-5/3}]\mathbf{I}\otimes\mathbf{I} & +
[\frac{2}{3}\mu\, \text{tr}\,\mathbf{c}\,J^{-5/3} - 2 K\, h^{\prime}(J)]\, \mathbf{I}\!\overset{\text{sym}}{\otimes}\!\mathbf{I}\\
 & - \frac{2}{3}\mu\, J^{-5/3}(\mathbf{c}\otimes\mathbf{I} + \mathbf{I}\otimes\mathbf{c}). \notag
\end{align}
Similar expressions for the tensors $ \mathbb{C}^{\text{BH}}$ can be derived from expressions \eqref{7-8} and \eqref{7-9} using expressions \eqref{36} and \eqref{7-3}
\begin{equation*}
   \mathbb{C}^{\text{BH}}=\mathbb{C}^{\text{Tr}} + \mathbf{I}\!\overset{\text{sym}}{\otimes}\!\boldsymbol{\sigma}+\boldsymbol{\sigma}\!\overset{\text{sym}}{\otimes}\!\mathbf{I}.
\end{equation*}
Note that all the fourth-order tensors considered here have full symmetry.

Note the simplicity of expression \eqref{7-8} compared to expression \eqref{7-9}. Note also that for mixed model \#1,
\begin{equation*}
  \mathbb{C}^{\text{Tr}}_{\text{mixed}}=\frac{1}{J}\,\widehat{\mathbb{C}}_{\sharp}^{(2)},
\end{equation*}
where the tangent stiffness tensor $\widehat{\mathbb{C}}_{\sharp}^{(2)}$ is defined in Eq. $(68)_2$ in \cite{KorobeynikovAAM2023}.

\subsection{General conclusions}
\label{sec:7-2}

The main purpose of this study was to answer the question: are there compelling reasons to use the more complex neo-Hookean vol-iso model of compressible isotropic hyperelastic material rather than the simpler mixed model of the same material when simulating deformations of both rubber-like (slightly compressible) and foam-like (highly compressible) materials?

To answer this question, we performed a systematic study of the performance of both compressible neo-Hookean models, mixed and vol-iso, using seven well-known volumetric functions and a new one. The results of the study lead to the following conclusions.

First, in applications for simulating deformations of slightly compressible materials, both kinematic and static quantities obtained using the two types of models are close to each other as well as to the same quantities obtained using the incompressible neo-Hookean model.

Second, both types of models satisfy Hill's postulate in the same range of the volume ratio, and both types of models do not satisfy the corotational stability postulate (CSP) for all volumetric functions used in this study. Further, both model variants satisfy the polyconvexity requirement, provided that the volumetric term $h(J)$ is convex in $J$.

Third, compared to vol-iso models, mixed models have physically reasonable responses in extreme states for a wider set of volumetric functions. In particular, the popular volumetric function of the form $(\ln J)^2/2$ (not convex in J!) leads to physically reasonable responses for kinematic and static variables in extreme states when using mixed models, but does not lead to the same responses when using vol-iso models.

However, it should be noted that vol-iso models can be used in the range of Poisson's ratio $-1<\nu <0.5$, whereas mixed models can be used only in the range $0\leq\nu< 0.5$.

To recapitulate, both the present work and previous study \cite{EhlersAM1998,KossaMeccanica2023,PenceMMS2015} have shown that when using volumetric functions from the Hartmann--Neff family with parameter $q\geq 2$ (the preferred value is $q=5$ \cite{HartmannIJSS2003}), mixed and vol-iso models show similar performance in applications and have physically reasonable responses in extreme states, which is convenient  for theoretical studies. However, mixed models allow the use of a wider set of volumetric functions with physically reasonable responses in extreme states, compared to vol-iso models. A second important advantage of mixed models over vol-iso models are more simple expressions for stresses and tangent stiffness tensors.

Note that the neo-Hookean material model has a narrow range application for simulating deformations of elastomers. First, its application is limited to simulating engineering strains of only about 10\%, and, second, this material model does not take into account second-order effects, in particular the Pointing effect in simple shear or torsion of circular cross-section rods (see, e.g., \cite{KorobeynikovMTDM2024}). The purpose of this study was to assess the feasibility of implementing different approaches to the simulation of deformations of compressible and slightly compressible elastomers in commercial FE systems. In particular, in MSC.Marc FE simulations of deformations using the generalized Mooney--Rivlin or Ogden models, compressible vol-iso material models are used for slightly compressible (rubber-like) elastomers and compressible mixed material models for compressible (foam-like) materials. Since the neo-Hooken model is a special case of both the Mooney--Rivlin and Ogden models, the present study shows that it is inappropriate to use models with different types of accounting for compressibility in FE systems. In fact, the simpler (in the mathematical sense) mixed Mooney--Rivlin or Ogden material models can equally successfully simulate deformations of both sufficiently compressible and slightly compressible materials.

\newpage
\footnotesize
\bibliographystyle{plain}
\bibliography{Korobeynikov_Springer_2025}

\end{document}